\theoremstyle{plain}
\newtheorem{thm}{Theorem}[section]
\newtheorem{lem}[thm]{Lemma}
\newtheorem{prop}[thm]{Proposition}
\newtheorem{cor}[thm]{Corollary}
\newtheorem{rem}[thm]{Remark}
\theoremstyle{definition}
\newtheorem{exmp}[thm]{Example}
\newcommand{\la}{\lambda}
\newcommand{\tabincell}[2]{\begin{tabular}{@{}#1@{}}#2\end{tabular}}
\newcommand{\sbtr}{\mathrm{sbtr}}
\numberwithin{equation}{section} \errorcontextlines=0
\begin{document}
\title{Murnaghan-Nakayama rule and spin bitrace for the Hecke-Clifford Algebra}
\author{Naihuan Jing}
\address{Department of Mathematics, North Carolina State University, Raleigh, NC 27695, USA}
\email{jing@ncsu.edu}
\author{Ning Liu}
\address{School of Mathematics, South China University of Technology,
Guangzhou, Guangdong 510640, China}
\email{mathliu123@outlook.com}
\subjclass[2020]{Primary: 20C08, 15A66; Secondary: 17B69, 20C15, 05E10}\keywords{Hecke-Clifford algebra, vertex operators, Schur's $Q$-polynomials, Murnaghan-Nakayama rule, bitrace}

\maketitle

\begin{abstract}
A Pfaffian-type Murnaghan-Nakayama rule is derived for the Hecke-Clifford algebra $\mathcal{H}^c_n$ based on the Frobenius formula and vertex operators,
and this leads to a combinatorial version via the tableaux
realization of Schur's $Q$-functions.
As a consequence, a general formula for the irreducible characters $\zeta^{\la}_{\mu}(q)$ using partition-valued functions is derived. Meanwhile, an iterative formula on the indexing partition $\la$ via the Pieri rule is also deduced. As applications, some compact formulae of the irreducible characters
are given for special partitions and a symmetric property of the irreducible character is found. We also introduce the spin bitrace as the analogue of the bitrace for the Hecke algebra and derive its general combinatorial formula. Tables of irreducible characters are listed for $n\leq7.$
\end{abstract}
\tableofcontents

\section{Introduction}
The Hecke algebra associated to the symmetric group controls the representation theory of the quantum general linear group by the Schur-Jimbo duality. Ram \cite{R} used this duality to show the Frobenius character formula for the Hecke algebra and gave a combinatorial rule for computing the irreducible characters. This combinatorial rule is a $q$-extension of the Murnaghan-Nakayama formula for irreducible characters of the symmetric group, which also holds for
Iwahori-Hecke algebras of classical types and the complex reflection groups $G(r,p,n)$ \cite{HR1, HR2}.
 As a split semisimple algebra, the Hecke algebra has a second orthogonality relation called the bitrace \cite{R2}.
  A general combinatorial formula of the bitrace at elements of particular shapes was given in \cite{HLR} based on a recursive rule for the Kazhdan-Lusztig polynomials \cite{Ro}. A simple derivation of the bitrace formula has been found \cite{JL} using techniques of vertex operators.

As a q-deformation of the Sergeev algebra $\mathfrak{H}^c_n$, the Hecke-Clifford algebra $\mathcal{H}^c_n$ was defined by Olshanski \cite{O} as a semidirect product of the Hecke algebra $H_n$ by the Clifford algebra $Cl_n$. When $q$ is generic, it satisfies the Schur-Sergeev-Olshanski super-duality with the quantum enveloping algebra of queer Lie superalgebras $q_n$. Moreover, the Grothendieck group of the tower of Hecke-Clifford algebras is isomorphic to the subalgebra of symmetric functions spanned by Schur's Q-functions, which parallels to the classical case of the Sergeev algebra $\mathfrak{H}^c_n$ $(q=1)$ \cite{S}. Jones and Nazarov \cite{JN} constructed spin analogues of Young's symmetrizers for the projective representations. In \cite{WW}, Wan and Wang used the super-duality to derive a Frobenius character formula for the Hecke-Clifford algebra in terms of spin Hall-Littlewood functions.

Representations of affine Lie algebras are closely intertwined with vertex operator algebras. Two realizations of the affine Kac-Moody Lie algebra
$\widehat{sl}(2)$ are given by the so-called untwisted vertex representation and twisted vertex representation, and their Fock spaces can be essentially constructed respectively by the ring of symmetric functions (spanned by Schur functions) and the subring of odd symmetric functions (spanned by Schur Q-functions)
up to tensoring with the group algebra of the root lattice \cite{FLM}. These form the theoretic background for the vertex realizations of Schur functions \cite{DJKM} and Schur Q-functions \cite{Jing}.

Recently, we have formulated a determinant-type Murnaghan-Nakayama formula for the Hecke algebra and gave an alternative proof of Ram's combinatorial Murnaghan-Nakayama formula \cite{JL} using generalization of untwisted vertex operators. The aim of this paper is to derive a spin analogue of the Murnaghan-Nakayama rule using generalization of twisted vertex operators. We formulate first a Pfaffian-type formula, and then give its combinatorial version with the help of the tableaux realization of Schur's $Q$-functions. This leads to several compact formulas for irreducible characters of the Hecke-Clifford algebra.

We also consider the bitrace of the Hecke-Clifford algebra. The bitrace can be viewed a
deformed orthogonality relation for irreducible characters. Our approach of twisted vertex operators naturally provides operational formulas for the
irreducible characters in terms of matrix coefficients of the vertex operators, which can be effectively calculated by their commutation relations.
We derive a spin version of the Halverson-Luduc-Ram's formula for the bitrace for the Hecke-Clifford algebra, which effectively leads to a combinatorial formula.

The main contents of the paper are as follows. In Section 2, we review necessary terminologies about symmetric functions. Based on the Frobenius formula, we express the irreducible characters of the Hecke-Clifford algebras in terms of the inner product of symmetric functions and provide an algebraic computational method via the vertex operator realization of the spin Hall-Littlewood functions. In particular, a compact formula for two-row characters is given (Theorem \ref{t:two}) and a symmetric property of $\zeta^{\la}_{\mu}(q)$ is also found (Theorem \ref{t:symme.}). In Section 3, starting from the Pieri rule, we deduce an iterative formula (Theorem \ref{t:secondit}).
By duality, we obtain a Pfaffian-type Murnaghan-Nakayama formula using the Wick rule (Theorem \ref{t:Pf}). We then derive a combinatorial rule for the Murnaghan-Nakayama formula for the Hecke-Clifford algebra (Corollary \ref{c:comb-MN}). As an immediate consequence, a general formula for $\zeta^{\lambda}_{\mu}(q)$ is given (Corollary \ref{t:general}). The spin bitrace is introduced as the second orthogonality relation for irreducible characters in Section 4, and it is expressed by vertex operators.  Subsequently a general combinatorial formula is derived (Theorem \ref{t:btr}). In particular, the regular character is determined using the general formula (Corollary \ref{t:reg}).

\section{Vertex operator realization of Schur's $Q$-polynomials}\label{S:VO}

A ({\it strict}) {\it partition} $\lambda=(\lambda_1,\lambda_2,\ldots)$ is a (strictly) decreasing sequence of non-negative integers (called parts). A partition $\lambda$ is called {\it odd} if all parts are odd integers. The sum $|\lambda|=\sum_i\lambda_i$ is the weight
 and the number of nonzero parts is the length $l(\lambda)$.
A partition $\lambda$ of weight $n$ is denoted by
$\lambda \vdash n$, and the set of partitions (resp. strict partitions or odd partitions) of weight $n$ will be denoted by $\mathcal{P}_n$ (resp. $\mathcal{SP}_n$ or $\mathcal{OP}_n$). It is well-known that $|\mathcal{OP}_n|=|\mathcal{SP}_n|$ by Euler's identity. For $\lambda\in\mathcal{P}_n$, define
\begin{align*}
\delta(\lambda)=
\begin{cases}
0 &\text {if $l(\lambda)$ is even,}\\
1 &\text {if $l(\lambda)$ is odd.}
\end{cases}
\end{align*}

For any partition $\lambda,$ denote $\epsilon(\lambda)=\frac{l(\lambda)-\delta(\lambda)}{2}=[\frac{l(\la)}2]\in \mathbb{Z_+}$, the largest integer $\leq \frac{l(\la)}2$. $\lambda$ can also be arranged in the increasing order, $\lambda=(1^{m_{1}}2^{m_{2}}\ldots)$,
where $m_{i}=\mathrm{Card}\{\lambda_{j}=i\mid 1\leq j\leq l(\lambda)\}$
is the multiplicity of $i$ in $\lambda$. Setting $z_{\lambda}=\prod_{i\geq 1}i^{m_i(\lambda)}m_i(\lambda)!$, we denote
\begin{align}\label{e:zt}
z_{\lambda}(t)&=\frac{z_{\lambda}}{\prod_{i\geq 1}(1-t^{\lambda_i})}\\
n(\lambda)&=\sum\limits_{i\geq1}(i-1)\lambda_i.
\end{align}
When the finite sequence $\lambda=(\lambda_1,\lambda_2,\ldots)$ of nonnegative integers
is ordered but not necessarily weakly decreasing, $\lambda$ is called a {\it composition} of $n=\sum_i\lambda_i$, denoted as $\lambda\models n$. The length
$l(\lambda)$ is the number of nonzero parts.

A partition $\lambda$ is visualized by its Young diagram consisting of nodes (or boxes sitting at) $(i,j)\in \mathbb{Z}_+^{2}$ such that $1\leq i\leq l(\lambda), 1\leq j\leq \lambda_{i}$. For a strict partition $\lambda$, its {\it shifted diagram} $\lambda^*$ is obtained from the ordinary Young diagram by shifting the $k$th row to the right by $k-1$ squares, for each $k$. If $\lambda$ is a diagram, then an inner corner of $\lambda$ is a node $(i,j)\in \lambda$ whose removal still leaves it that of a partition. The conjugate partition $\la'=(\la_1', \ldots, \la_{\la_1}')$ corresponds to the reflection of
the Young diagram along the diagonal.

Let $\Lambda$ be the ring of symmetric functions in the $x_n$ ($n\in\mathbb N$) over $\mathbb{Q}$, and we will also study the ring $\Lambda_{\mathbb{Z}}$ as a lattice of $\Lambda$. The ring $\Lambda$ has several linear bases indexed by partitions. For each $r>1$, let $p_{r}=\sum x_{i}^{r}$ be the $r$th power-sum. Then $p_{\lambda}=p_{\lambda_{1}}p_{\lambda_{2}}\cdots p_{\lambda_{l}}$ $(\lambda\in\mathcal P)$ form a $\mathbb Q$-basis of $\Lambda$.  Let $\Gamma$ denote the subring of $\Lambda$ generated by the $p_r$ indexed by odd $r$:
\begin{align*}
\Gamma=\mathbb{Q}[p_r: r \quad\text{odd}].
\end{align*}
We equip $\Gamma$ with the inner product defined by
\begin{align}
\langle p_{\lambda}, p_{\mu}\rangle=2^{-l(\lambda)}\delta_{\lambda\mu}z_{\lambda}, \quad \lambda,\mu\in \mathcal{OP}.
\end{align}
The function $p_n$ can be viewed as the left multiplication operator on $\Lambda$, and its adjoint operator
is the differential operator $p_n^* =\frac{n}{2}\frac{\partial}{\partial p_n}$. Here the adjoint $A^*$ of the (homogeneous) linear operator $A$ is defined as usually as the $\mathbb Q$-linear and  anti-involutive operator satisfying
\begin{equation}
\langle Au, v\rangle=\langle u, A^*v\rangle
\end{equation}
for $u, v\in \Gamma$.

The ring $\Gamma$ has a distinguished orthogonal basis of Schur's Q-functions $Q_{\lambda}$ indexed by $\lambda\in\mathcal{SP}$:
\begin{align}
\langle Q_{\lambda}, Q_{\mu}\rangle=2^{l(\lambda)}\delta_{\lambda\mu}.
\end{align}

Recall the vertex operator realization of Schur's $Q$-symmetric functions \cite{Jing}.
Let
$\mathbf Q(z)$ 
be the linear map: $\Gamma\longrightarrow \Gamma[[z, z^{-1}]]=\mathbb{Q}[[z,z^{-1}]]\otimes\Gamma$ defined by
\begin{align}\label{e:SchurQop}
\mathbf Q(z)&=\mbox{exp} \left( \sum\limits_{n\geq 1, odd} \dfrac{2}{n}p_nz^{n} \right) \mbox{exp} \left( -\sum \limits_{n\geq 1, odd} \frac{\partial}{\partial p_n}z^{-n} \right)=\sum_{n\in\mathbb Z}Q_nz^{n}.
\end{align}
and the adjoint operator $\mathbf Q^*(z)=\mathbf Q(-z)$, i.e. $Q^*_{n}=(-1)^nQ_{-n}$.
Then the operators $Q_n \in \mathrm{End}(\Gamma)$ realize Schur's $Q$-functions as follows.

Let $q_{n}=q_{n}(x;t)$ be the symmetric function defined by the generating series
\begin{align}\label{e:qop}
\mathbf q(z)=\mbox{exp} \left( \sum\limits_{n=1, odd}^{\infty}\frac{2}{n}p_{n}z^{n} \right)=\sum\limits_{n\geq0}q_{n}z^{n}
\end{align}
and define
$q_{\lambda}=q_{\lambda_{1}}q_{\lambda_{2}}\cdots q_{\lambda_{l}}$
for any partition $\lambda$, then the set $\{q_{\lambda}| \lambda\in \mathcal{SP}\}$ also forms a basis of $\Gamma.$ Note that $q_n$ is Schur's $Q$-function associated to the one-row partition $(n)$, explicitly.

\begin{align}
q_n=\sum\limits_{\rho\in\mathcal{OP}_n}\frac{2^{l(\rho)}}{z_{\rho}}p_{\rho}   \quad (n>0).
\end{align}

For any strict partition $\mu=(\mu_{1},\mu_{2},\ldots,\mu_{k})$, the
vertex operator product $Q_{\mu_{1}}Q_{\mu_{2}}\cdots Q_{\mu_{k}}.1$ can be expressed as \cite{Jing}
\begin{align}\label{e:Schur}
Q_{\mu_{1}}Q_{\mu_{2}}\cdots Q_{\mu_{k}}.1=\prod\limits_{i<j}\frac{1-R_{ij}}{1+R_{ij}}q_{\mu_{1}}q_{\mu_{2}}\cdots q_{\mu_{k}}=Q_{\mu}
\end{align}
which is the Schur $Q$-function associated to the strict partition $\mu$.

The following relations of $Q_{n}$ 
will be useful in our discussion.
\begin{prop}\label{t:relation} \cite{Jing} The components of $\mathbf Q(z)$ 
generate a Clifford algebra:
\begin{align}\label{e:relation}
\{Q_m, Q_n\}&=(-1)^n2\delta_{m,-n},   
\end{align}
where $\{A, B\}:=AB+BA$. Moreover, $Q_{-n}.1=\delta_{n,0} (n\geq0)$.
\end{prop}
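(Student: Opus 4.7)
My plan is to derive both assertions directly from the explicit bosonic formula \eqref{e:SchurQop} by computing the generating series $\mathbf{Q}(z)\mathbf{Q}(w) + \mathbf{Q}(w)\mathbf{Q}(z)$ via normal ordering, and then extracting coefficients. Let me split $\mathbf{Q}(z)=\mathbf{Q}_+(z)\mathbf{Q}_-(z)$, where $\mathbf{Q}_+(z)=\exp\bigl(\sum_{n\geq 1,\,\mathrm{odd}}\tfrac{2}{n}p_nz^n\bigr)$ is the ``creation'' part and $\mathbf{Q}_-(z)=\exp\bigl(-\sum_{n\geq 1,\,\mathrm{odd}}\tfrac{\partial}{\partial p_n}z^{-n}\bigr)$ is the ``annihilation'' part. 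Since $[\partial/\partial p_n,p_m]=\delta_{nm}$ is central, the Baker--Campbell--Hausdorff formula gives a scalar exchange relation
\[
\mathbf{Q}_-(z)\mathbf{Q}_+(w)=\exp\!\Bigl(-\sum_{n\geq 1,\,\mathrm{odd}}\tfrac{2}{n}(w/z)^n\Bigr)\mathbf{Q}_+(w)\mathbf{Q}_-(z).
\]
Using the well-known identity $\sum_{n\geq 1,\,\mathrm{odd}}\tfrac{2}{n}x^n=\log\tfrac{1+x}{1-x}$, the scalar factor collapses to the rational function $\tfrac{z-w}{z+w}$. Thus, after writing $:\!\mathbf{Q}(z)\mathbf{Q}(w)\!:\,=\mathbf{Q}_+(z)\mathbf{Q}_+(w)\mathbf{Q}_-(z)\mathbf{Q}_-(w)$,
\[
\mathbf{Q}(z)\mathbf{Q}(w)=\iota_{z,w}\!\Bigl(\tfrac{z-w}{z+w}\Bigr)\,:\!\mathbf{Q}(z)\mathbf{Q}(w)\!:,\qquad \mathbf{Q}(w)\mathbf{Q}(z)=\iota_{w,z}\!\Bigl(\tfrac{w-z}{w+z}\Bigr)\,:\!\mathbf{Q}(z)\mathbf{Q}(w)\!:,
\]
where $\iota_{z,w}$ expands in the region $|z|>|w|$.

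The key technical step is to add the two rational expansions. A short calculation shows
\[
\iota_{z,w}\!\Bigl(\tfrac{z-w}{z+w}\Bigr)+\iota_{w,z}\!\Bigl(\tfrac{w-z}{w+z}\Bigr)=2\sum_{k\in\mathbb{Z}}(-1)^k(w/z)^k,
\]
which is a formal delta function supported on $z=-w$. Evaluating the normal-ordered product at $z=-w$ and noting that $\mathbf{Q}_+(-w)\mathbf{Q}_+(w)=1$ and $\mathbf{Q}_-(-w)\mathbf{Q}_-(w)=1$ since only odd powers appear (so $(-w)^n+w^n=0$), we obtain the clean generating identity
\[
\mathbf{Q}(z)\mathbf{Q}(w)+\mathbf{Q}(w)\mathbf{Q}(z)=2\sum_{k\in\mathbb{Z}}(-1)^k z^{-k}w^k.
\]
Reading off the coefficient of $z^m w^n$ yields $\{Q_m,Q_n\}=2(-1)^n\delta_{m,-n}$, as claimed.

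The vacuum statement $Q_{-n}.1=\delta_{n,0}$ follows almost for free: applying $\mathbf{Q}(z)$ to the constant $1\in\Gamma$, the annihilation exponential acts trivially since each $\partial/\partial p_k$ kills $1$, so $\mathbf{Q}(z).1=\mathbf{Q}_+(z).1=\mathbf{q}(z)=\sum_{n\geq 0}q_n z^n$ by \eqref{e:qop}. Thus $Q_n.1=q_n$ for $n\geq 0$ and $Q_n.1=0$ for $n<0$, which is exactly the claim once we note $q_0=1$.

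The main obstacle is the bookkeeping around the two different expansion regions: one has to be careful that the asymmetric expansions $\iota_{z,w}$ and $\iota_{w,z}$ applied to rational functions are precisely what produce the delta distribution after symmetrization. Everything else reduces to the standard identity for $\log\tfrac{1+x}{1-x}$ and the fact that odd-indexed power sums commute in the required way.
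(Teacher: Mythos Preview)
Your proof is correct and is the standard vertex-operator computation for this result. The paper does not supply its own proof of this proposition; it is quoted as a known fact from \cite{Jing}, so there is nothing to compare against beyond noting that your normal-ordering argument with the formal delta function $\sum_{k\in\mathbb Z}(-1)^k(w/z)^k$ and the observation $:\!\mathbf Q(-w)\mathbf Q(w)\!:\,=1$ (because only odd modes occur) is exactly the mechanism used in the cited reference.
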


The Hecke-Clifford algebra $\mathcal{H}^c_n$ \cite{O} is the associative superalgebra over the field $\mathbb{C}(t^{\frac{1}{2}})$ with even generators $T_1,\cdots, T_{n-1}$ and odd generators $c_1,\cdots, c_n$ subject to the relations:
\begin{align*}
(T_i-t)(T_i+1)=0, \quad &1\leq i\leq n-1,\\
T_iT_j=T_jT_i, \quad &1\leq i,j\leq n-1, \mid i-j\mid>1,\\
T_iT_{i+1}T_i=T_{i+1}T_iT_{i+1}, \quad &1\leq i\leq n-2,\\
c^2_i=1, c_ic_j=-c_jc_i, \quad &1\leq i\neq j\leq n,\\
T_ic_j=c_jT_i, \quad &j\neq i, i+1, 1\leq i\leq n-1, 1\leq j\leq n,\\
T_ic_i=c_{i+1}T_i, \quad &1\leq i\leq n-1.
\end{align*}

For an (ordered) subset $I=\{i_1, i_2, \ldots, i_k\}\subset [n]=\{1,2,\cdots,n\}$, we denote $C_I=c_{i_1}c_{i_2}...c_{i_k}$, and set
$C_{\emptyset}=1$. The elements $c_i$ generate a Clifford algebra denoted as $Cl_n,$ clearly $\{C_I|I\subset [n]\}$ forms a basis of $Cl_n$.  Then, the set $\{T_{\sigma}C_I \mid \sigma \in S_n, I\subset [n]\}$ forms a linear basis of the algebra $\mathcal{H}^c_n$ (cf. \cite{JN}).

We introduce the symmetric function $g_n=g_n(x; t)$ 
by its generating function:
\begin{align}\label{e:gop}
\mathbf g(z)=\sum\limits_{n\geq0}g_nz^n=\mbox{exp} \left( \sum\limits_{n\geq 1, odd} \dfrac{2(t^n-1)}{n}p_nz^{n} \right)
\end{align}
and then set $\tilde{g}_n=\frac{1}{t-1}g_n$.
For any partition $\mu=(\mu_1,\cdots,\mu_l)$, we define $g_{\mu}=g_{\mu_1}\cdots g_{\mu_l}\in \Gamma$ and $\tilde{g}_{\mu}=\tilde{g}_{\mu_1}\cdots\tilde{g}_{\mu_l}$.
It is known that the $g_{\lambda}$ $(\la\in\mathcal{SP})$ form a basis of $\Gamma$.
We also need the adjoint operator $g_n^*$ given by:
\begin{align}\label{e:gop2}
\mathbf g^*(z)&=\mbox{exp} \left( \sum\limits_{n\geq 1, odd} (t^n-1)\frac{\partial}{\partial p_n}z^{-n} \right)=\sum\limits_{n\geq0}g^*_nz^{-n}.
\end{align}

For $n>0,$ we have explicitly 
\begin{align}\label{e:qstarn}
Q^*_{n}.1&=\sum\limits_{\rho\in\mathcal{OP}_n}\frac{(-2)^{l(\rho)}}{z_{\rho}}p_{\rho},\\ \label{e:gn}
g_n&=\sum\limits_{\rho\in\mathcal{OP}_n}\frac{(-2)^{l(\rho)}}{z_{\rho}(t)}p_{\rho}.
\end{align}

In \cite{WW}, Wan and Wang established the Frobenius type formula for the irreducible character $\zeta^{\lambda}$ of the Hecke-Clifford algebra $\mathcal{H}^c_n$:
\begin{align}\label{e:characters}
\tilde{g}_{\mu}(x;q)=\sum\limits_{\lambda\in \mathcal{SP}_n}2^{-\frac{l(\lambda)+\delta(\lambda)}{2}}\zeta^{\lambda}_{\mu}(q)Q_{\lambda}(x).
\end{align}
where $\mu\vdash n$ and $Q_{\lambda}$ is the Schur $Q$-function associated with the strict partition $\lambda$.

Therefore for $\la\in\mathcal{SP}, \mu\in\mathcal{P}$ the character value
\begin{align}\label{e:matcoeff}
\zeta^{\lambda}_{\mu}(q)&=2^{-\epsilon(\lambda)}\langle \tilde{g}_{\mu}(x;q), Q_{\lambda}\rangle
=\frac{2^{-\epsilon(\lambda)}}{(q-1)^{l(\mu)}}\langle g_{\mu}(q).1, Q_{\lambda}.1\rangle,
\end{align}
and we are going to compute $G^{\lambda}_{\mu}(t):=\langle g_{\mu}, Q_{\lambda}.1\rangle$ in the following.

For $|z|>|w|,$ it follows from the usual vertex operator calculus that
\begin{align}
\label{e:relations1}
\mathbf g^*(z)\mathbf Q(w)\frac{z+w}{z-w}=\mathbf Q(w)\mathbf g^*(z)\frac{z+tw}{z-tw},
\end{align}
and this implies the following result by expanding the coefficients.
\begin{prop}\label{Up Down}
For any $m,n\in\mathbb{Z},$
\begin{align}\label{e:com1}
g^{*}_{n}Q_{m}+2\sum\limits_{k\geq1}^ng^*_{n-k}Q_{m-k}&=Q_{m}g^{*}_{n}+2\sum\limits_{k\geq1}^{n}t^kQ_{m-k}g^*_{n-k}\\\label{e:com2}
Q_{m}g_{n}+2\sum\limits_{k\geq1}^n(-1)^kQ_{m+k}g_{n-k}&=g_{n}Q_{m}+2\sum\limits_{k\geq1}^{n}(-1)^kt^kg_{n-k}Q_{m+k}.
\end{align}
\end{prop}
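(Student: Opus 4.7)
The plan is to derive both commutation relations by extracting coefficients from the single operator identity \eqref{e:relations1}. First, I would expand the rational prefactors as formal power series in $w/z$, valid for $|z|>|w|$:
\begin{align*}
\frac{z+w}{z-w}=1+2\sum_{k\geq 1}\left(\frac{w}{z}\right)^{k},\qquad \frac{z+tw}{z-tw}=1+2\sum_{k\geq 1}\left(\frac{tw}{z}\right)^{k}.
\end{align*}
Writing $\mathbf g^*(z)=\sum_{n\geq 0}g^*_n z^{-n}$ and $\mathbf Q(w)=\sum_{m\in\mathbb Z}Q_m w^m$ and substituting, the coefficient of $z^{-n}w^m$ on the left-hand side of \eqref{e:relations1} is $g^*_nQ_m+2\sum_{k=1}^{n}g^*_{n-k}Q_{m-k}$, while on the right-hand side it is $Q_mg^*_n+2\sum_{k=1}^{n}t^{k}Q_{m-k}g^*_{n-k}$. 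In each sum the cutoff $k\le n$ is forced by the vanishing $g^*_\ell=0$ for $\ell<0$. Equating these produces \eqref{e:com1}.

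For \eqref{e:com2} the cleanest route is to take the $*$-adjoint of \eqref{e:com1}. Since $*$ is an anti-involution with $(g^*_n)^*=g_n$ and $Q_m^*=(-1)^mQ_{-m}$ (from $\mathbf Q^*(z)=\mathbf Q(-z)$), the adjoint identity reads
\begin{align*}
(-1)^mQ_{-m}g_n+2\sum_{k=1}^n(-1)^{m-k}Q_{k-m}g_{n-k}=(-1)^mg_nQ_{-m}+2\sum_{k=1}^n(-1)^{m-k}t^kg_{n-k}Q_{k-m}.
\end{align*}
Cancelling the common sign $(-1)^m$ and relabelling $m\mapsto -m$ yields exactly \eqref{e:com2}. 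Alternatively, one can rerun the same Heisenberg normal-ordering argument that gave \eqref{e:relations1}, with $\mathbf g(z)$ (purely creation) in place of $\mathbf g^*(z)$; using $[\partial/\partial p_n,p_m]=\delta_{n,m}$ for odd $n,m$ together with $\sum_{n\text{ odd}}x^n/n=\tfrac12\log\frac{1+x}{1-x}$ one obtains the companion identity
\begin{align*}
\mathbf Q(w)\mathbf g(z)\,\frac{w-z}{w+z}=\mathbf g(z)\mathbf Q(w)\,\frac{w-tz}{w+tz},
\end{align*}
and expansion in $z/w$ for $|w|>|z|$ then directly produces the alternating factor $(-1)^k$ in \eqref{e:com2}.

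The whole argument is mechanical, and the only real care point is bookkeeping: keeping track of signs, the combination $(-1)^k t^k$, and the truncation $k\le n$ dictated by the grading of $\mathbf g^*(z)$ (or $\mathbf g(z)$). No new algebraic ingredient beyond the normal-ordering already used to establish \eqref{e:relations1} is needed.
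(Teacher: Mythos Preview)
Your proposal is correct and follows essentially the same approach as the paper: the paper simply states that \eqref{e:relations1} ``implies the following result by expanding the coefficients,'' and you carry out precisely that expansion, with the added detail of deducing \eqref{e:com2} from \eqref{e:com1} via the anti-involution $*$ (using $Q_m^*=(-1)^mQ_{-m}$), which the paper leaves implicit.
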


For two compositions $\lambda,\mu,$ we say $\lambda\subset \mu$ if $\lambda_{i}\leq \mu_{i}$ for all $i\geq1$. In this case,
we write $\lambda-\mu=(\lambda_{1}-\mu_{1},\lambda_{2}-\mu_{2},\ldots)\vDash |\la|-|\mu|$. For each partition $\la=(\la_1, \ldots, \la_l)$, we define that
\begin{align}
\la^{[i]}=(\la_{i+1}, \cdots, \la_l), \qquad i=0, 1, \ldots, l
\end{align}
So $\la^{[0]}=\la$ and $\la^{[l]}=\emptyset$.

For simplicity we denote for integer $k>0$
\begin{align*}
(k)_t&=\frac{t^k-(-1)^k}{t+1}=t^{k-1}-t^{k-2}+t^{k-3}-\cdots+(-1)^{k-2}t+(-1)^{k-1},\\
[k]_t&=\frac{t^k-1}{t-1}=t^{k-1}+t^{k-2}+\cdots+t+1.
\end{align*}
and we make the convention that $(0)_t=1$ and $(n)_t=0$ $(n<0)$. Note that $(k)_t=(-1)^{k-1}[k]_{-t}$.
For a composition $\tau$, we denote $(\tau)_t=\prod\limits_{a\geq1}(\tau_a)_t$.
The following result can be easily shown by induction.
\begin{lem}\label{t:idetity}
Let $k$ be a positive integer, then we have
\begin{align}
(k)_t+2\sum\limits_{i=1}^{k-1}(i)_t=[k]_t.
\end{align}
\end{lem}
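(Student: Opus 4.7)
The plan is a straightforward induction on $k$, exploiting the closed-form expressions for $(k)_t$ and $[k]_t$ that are recorded just before the lemma.

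First I would handle the base case $k=1$: the sum on the left is empty, and $(1)_t=1=[1]_t$, so the identity holds trivially.

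For the inductive step, assuming the formula for $k-1$, I would split off the last two summands on the left, writing
\begin{align*}
(k)_t + 2\sum_{i=1}^{k-1}(i)_t \;=\; \bigl[(k)_t + (k-1)_t\bigr] \;+\; \Bigl[(k-1)_t + 2\sum_{i=1}^{k-2}(i)_t\Bigr],
\end{align*}
so that the induction hypothesis identifies the second bracket with $[k-1]_t$. The proof then reduces to verifying the one-line identity $(k)_t+(k-1)_t=t^{k-1}$, which follows directly from the closed form $(k)_t=\frac{t^k-(-1)^k}{t+1}$: indeed,
\begin{align*}
(k)_t+(k-1)_t=\frac{t^k-(-1)^k+t^{k-1}-(-1)^{k-1}}{t+1}=\frac{t^{k-1}(t+1)}{t+1}=t^{k-1}.
\end{align*}
Combining this with $[k]_t=t^{k-1}+[k-1]_t$ gives the desired equality.

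There is essentially no obstacle here; the only thing to be careful about is the edge case $k=1$ (where the sum is empty and one should not appeal to the telescoping identity), which is why I would dispatch it separately as the base case rather than fold it into the inductive step.
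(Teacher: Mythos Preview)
Your induction argument is correct and matches the paper's approach, which simply states that the result ``can be easily shown by induction'' without further detail. There is nothing to add.
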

Now we can give the following key result.

\begin{thm}\label{t:iterative}
For $\lambda\in\mathcal{SP}_{n}$, $\mu\in\mathcal{P}_n$ and integer $k$, we have that
\begin{align}\label{e:gQ}
g_{k}^{*}Q_{\lambda}.1&=\sum\limits_{\tau\models k}2^{l(\tau)}(t-1)^{l(\tau)}(\tau)_tQ_{\lambda-\tau}.1=
\sum_{\tau\models k}f_{\tau}Q_{\lambda-\tau}.1,\\\label{e:Qg}
Q_{-k}g_{\mu}.1&=\sum_{i=k}^{n}\sum_{\tau\in \mathcal{C}^{\mu}_{i}}(-1)^if_{\tau}g_{\mu-\tau}Q_{-k+i}.1,
\end{align}
where $\mathcal{C}^{\mu}_{k}\triangleq\{\tau\models k\mid \tau\subset\mu\}$ and $f_{\tau}=2^{l(\tau)}(t-1)^{l(\tau)}(\tau)_t$.
\end{thm}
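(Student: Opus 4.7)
Both identities follow by moving the single operator ($g^{*}_{k}$ in the first, $Q_{-k}$ in the second) past the operator product in front of the vacuum $1$ using Proposition \ref{Up Down}, then invoking the vacuum annihilation relations.

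The first step is to establish a clean closed-form single-step commutation identity
\[
g^{*}_{n}Q_{m}=\sum_{j=0}^{n}c_{j}\,Q_{m-j}\,g^{*}_{n-j},\qquad c_{0}=1,\quad c_{j}=2(t-1)(j)_t\ \ (j\geq 1).
\]
Plugging this ansatz into \eqref{e:com1} and comparing the coefficient of $Q_{m-s}g^{*}_{n-s}$ on both sides reduces everything to the scalar recurrence $c_{s}+2\sum_{j=0}^{s-1}c_{j}=2t^{s}$; verifying the closed form then amounts to multiplying Lemma \ref{t:idetity} by $t-1$, which gives $(t-1)[s]_t=t^{s}-1$, exactly what is required. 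A parallel calculation based on \eqref{e:com2}, where the extra $(-1)^{k}$ inside the sum propagates to an overall sign, yields the companion identity
\[
Q_{m}g_{n}=\sum_{j=0}^{n}(-1)^{j}c_{j}\,g_{n-j}\,Q_{m+j}.
\]

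Next I would iterate these one-step rules through the full products. For \eqref{e:gQ}, commuting $g^{*}_{k}$ past $Q_{\lambda_{1}}\cdots Q_{\lambda_{l}}$ produces $\sum_{\tau}\bigl(\prod_{i}c_{\tau_{i}}\bigr)\,Q_{\lambda-\tau}\,g^{*}_{k-|\tau|}.1$, summed over non-negative tuples $\tau=(\tau_{1},\ldots,\tau_{l})$. Since $g^{*}_{j}.1=\delta_{j,0}$ (immediate from \eqref{e:gop2}, as $g^{*}_{j}$ for $j>0$ is a differential operator in the $p_{i}$), only $|\tau|=k$ survives; zero parts contribute the factor $c_{0}=1$, so $\prod_{i}c_{\tau_{i}}$ collapses into $2^{l(\tau)}(t-1)^{l(\tau)}(\tau)_t=f_{\tau}$, giving \eqref{e:gQ}. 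For \eqref{e:Qg}, iterating the companion formula through $g_{\mu_{1}}\cdots g_{\mu_{l}}.1$ yields $(-1)^{|\tau|}f_{\tau}\,g_{\mu-\tau}\,Q_{-k+|\tau|}.1$ with $\tau\subset\mu$; the lower bound $|\tau|=i\geq k$ then comes from $Q_{-(k-i)}.1=0$ whenever $k-i>0$ by Proposition \ref{t:relation}, and the upper bound $i\leq n=|\mu|$ is automatic from $\tau\subset\mu$.

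The principal obstacle is guessing that the coefficients collapse into the compact polynomial $c_{j}=2(t-1)(j)_t$: without this insight one is stuck with an unwieldy alternating double sum coming from repeated use of \eqref{e:com1}. With the right ansatz in hand, Lemma \ref{t:idetity} closes the recurrence in a single line, after which the remainder of the argument is routine bookkeeping on the two operator products.
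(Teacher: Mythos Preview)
Your proof is correct and in fact cleaner than the paper's own argument. The paper proves \eqref{e:gQ} by a direct induction on $k+|\lambda|$: it applies \eqref{e:com1} once to peel off $Q_{\lambda_1}$, invokes the inductive hypothesis on the three resulting pieces, then splits the first piece according to whether $\tau_1=0$ or $\tau_1\neq 0$ and uses Lemma \ref{t:idetity} to recombine everything. This works but the bookkeeping is heavy.

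You instead factor the problem into two steps: (a) establish the single-operator identity $g^{*}_{n}Q_{m}=\sum_{j}c_{j}Q_{m-j}g^{*}_{n-j}$ with $c_{j}=f_{j}$, and (b) iterate trivially through the product. Step (a) is where all the content lives, and your recurrence $c_{s}+2\sum_{j<s}c_{j}=2t^{s}$ together with Lemma \ref{t:idetity} closes it in one line. This decoupling is what the paper's later Section \ref{S:Pf-type} implicitly exploits (see the display preceding the definition of $f_{(m,n)}$), but the paper does not use it in the proof of Theorem \ref{t:iterative} itself. Note also that your one-step identity is literally the coefficient expansion of $\mathbf g^{*}(z)\mathbf Q(w)=\mathbf Q(w)\mathbf g^{*}(z)\cdot\frac{(z+tw)(z-w)}{(z-tw)(z+w)}$ obtained by rearranging \eqref{e:relations1}; the rational factor here is exactly the generating series $\sum_{j}f_{j}(w/z)^{j}$ computed in Lemma \ref{l:z}, so one could bypass the recurrence entirely. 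The companion argument for \eqref{e:Qg} is handled the same way in both approaches and is routine.
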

The coefficients $f_{\tau}$ will be computed explicitly in Lemma \ref{l:z}.
\begin{proof}
As the arguments for both identities are similar, we only verify the first one by induction on $k+n$. The initial step is clear. Assume that \eqref{e:gQ} holds for any $k^{'}$ and strict partition $\nu$ with $k^{'}+|\nu|<k+n$,
it follows from Proposition \ref{Up Down} and the inductive hypothesis that
\begin{align*}
g_{k}^{*}Q_{\lambda}.1&=-2\sum\limits_{i=1}^{k}g^*_{k-i}Q_{\lambda_1-i}Q_{\lambda^{[1]}}.1+Q_{\lambda_1}g^*_kQ_{\lambda^{[1]}}.1+2\sum\limits_{i=1}^kt^iQ_{\lambda_1-i}g
^*_{k-i}Q_{\lambda^{[1]}}.1\\
&=-2\sum\limits_{i=1}^k\sum\limits_{\tau\models k-i}2^{l(\tau)}(t-1)^{l(\tau)}\prod\limits_{a\geq1}(\tau_a)_tQ_{(\lambda_1-i,\lambda_2,\cdots\lambda_l)-\tau}.1\\
&+Q_{\lambda_1}\sum\limits_{\tau\models k}2^{l(\tau)}(t-1)^{l(\tau)}\prod\limits_{a\geq1}(\tau_a)_tQ_{\lambda^{[1]}-\tau}.1\\
&+2\sum\limits_{i=1}^kt^iQ_{\lambda_1-i}\sum\limits_{\tau\models k-i}2^{l(\tau)}(t-1)^{l(\tau)}\prod\limits_{a\geq1}(\tau_a)_tQ_{\lambda^{[1]}-\tau}.1
\end{align*}
Next, we consider the first term $A=\sum\limits_{i=1}^k\sum\limits_{\tau\models k-i}2^{l(\tau)}(t-1)^{l(\tau)}\prod\limits_{a\geq1}(\tau_a)_tQ_{(\lambda_1-i,\lambda_2,\cdots\lambda_l)-\tau}.1$. We divided $A$ into two parts : $\tau_1=0$ and $\tau_1\neq 0$.
\begin{align*}
A&=\sum\limits_{i=1}^k\sum\limits_{\tau\models k-i}2^{l(\tau)}(t-1)^{l(\tau)}\prod\limits_{a\geq1}(\tau_a)_tQ_{\lambda_1-i}Q_{\lambda^{[1]}-\tau}.1\\
&+\sum\limits_{i=1}^k\sum\limits_{j=1}^{k-i}\sum\limits_{\tau\models k-i-j}2^{l(\tau)+1}(t-1)^{l(\tau)+1}\prod\limits_{a\geq1}(\tau_a)_t(j)_tQ_{\lambda_1-i-j}Q_{\lambda^{[1]}-\tau}.1
\end{align*}
In the second summand, fixing $i$ and replacing dummy index $j$ by $i+j$ and then moving $(j-i)$ forward, we have that
\begin{align*}
A&=\sum\limits_{i=1}^k\sum\limits_{\tau\models k-i}2^{l(\tau)}(t-1)^{l(\tau)}\prod\limits_{a\geq1}(\tau_a)_tQ_{\lambda_1-i}Q_{\lambda^{[1]}-\tau}.1\\
&+\sum\limits_{i=1}^k\sum\limits_{j=i+1}^{k}(j-i)_t\sum\limits_{\tau\models k-j}2^{l(\tau)+1}(t-1)^{l(\tau)+1}\prod\limits_{a\geq1}(\tau_a)_tQ_{\lambda_1-j}Q_{\lambda^{[1]}-\tau}.1\\
&=\sum\limits_{i=1}^k\sum\limits_{\tau\models k-i}2^{l(\tau)}(t-1)^{l(\tau)}\prod\limits_{a\geq1}(\tau_a)_tQ_{\lambda_1-i}Q_{\lambda^{[1]}-\tau}.1\\
&+\sum\limits_{j=2}^k(t-1)([j]_t-(j)_t)\sum\limits_{\tau\models k-j}2^{l(\tau)}(t-1)^{l(\tau)}\prod\limits_{a\geq1}(\tau_a)_tQ_{\lambda_1-j}Q_{\lambda^{[1]}-\tau}.1~~~\text{(by Lemma \ref{t:idetity})}\\
&=\sum\limits_{\tau\models k-1}2^{l(\tau)}(t-1)^{l(\tau)}\prod\limits_{a\geq1}(\tau_a)_tQ_{\lambda_1-1}Q_{\lambda^{[1]}-\tau}.1\\
&+\sum\limits_{j=2}^kt^j\sum\limits_{\tau\models k-j}2^{l(\tau)}(t-1)^{l(\tau)}\prod\limits_{a\geq1}(\tau_a)_tQ_{\lambda_1-j}Q_{\lambda^{[1]}-\tau}.1\\
&-\sum\limits_{j=2}^k\sum\limits_{\tau\models k-j}2^{l(\tau)}(t-1)^{l(\tau)+1}\prod\limits_{a\geq1}(\tau_a)_t(j)_tQ_{\lambda_1-j}Q_{\lambda^{[1]}-\tau}.1.
\end{align*}
Plugging $A$ into the original equation, we then have
\begin{align*}
g_{k}^{*}Q_{\lambda}.1&=-2\sum\limits_{\tau\models k-1}2^{l(\tau)}(t-1)^{l(\tau)}\prod\limits_{a\geq1}(\tau_a)_tQ_{\lambda_1-1}Q_{\lambda^{[1]}-\tau}.1\\
&+2\sum\limits_{j=2}^k\sum\limits_{\tau\models k-j}2^{l(\tau)}(t-1)^{l(\tau)+1}\prod\limits_{a\geq1}(\tau_a)_t(j)_tQ_{\lambda_1-j}Q_{\lambda^{[1]}-\tau}.1\\
&+Q_{\lambda_1}\sum\limits_{\tau\models k}2^{l(\tau)}(t-1)^{l(\tau)}\prod\limits_{a\geq1}(\tau_a)_tQ_{\lambda^{[1]}-\tau}.1\\
&+2tQ_{\lambda_1-1}\sum\limits_{\tau\models k-1}2^{l(\tau)}(t-1)^{l(\tau)}\prod\limits_{a\geq1}(\tau_a)_tQ_{\lambda^{[1]}-\tau}.1\\
&=(2t-2)\sum\limits_{\tau\models k-1}2^{l(\tau)}(t-1)^{l(\tau)}\prod\limits_{a\geq1}(\tau_a)_tQ_{\lambda_1-1}Q_{\lambda^{[1]}-\tau}.1\\
&+\sum\limits_{j=2}^k\sum\limits_{\tau\models k-j}2^{l(\tau)+1}(t-1)^{l(\tau)+1}\prod\limits_{a\geq1}(\tau_a)_t(j)_tQ_{\lambda_1-j}Q_{\lambda^{[1]}-\tau}.1\\
&+\sum\limits_{\tau\models k}2^{l(\tau)}(t-1)^{l(\tau)}\prod\limits_{a\geq1}(\tau_a)_tQ_{\lambda_1}Q_{\lambda^{[1]}-\tau}.1\\
&=\sum\limits_{j=1}^k\sum\limits_{\tau\models k-j}2^{l(\tau)+1}(t-1)^{l(\tau)+1}\prod\limits_{a\geq1}(\tau_a)_t(j)_tQ_{\lambda_1-j}Q_{\lambda^{[1]}-\tau}.1\\
&+\sum\limits_{\tau\models k}2^{l(\tau)}(t-1)^{l(\tau)}\prod\limits_{a\geq1}(\tau_a)_tQ_{\lambda_1}Q_{\lambda^{[1]}-\tau}.1\\
&=\sum\limits_{\tau\models k}2^{l(\tau)}(t-1)^{l(\tau)}\prod\limits_{a\geq1}(\tau_a)_tQ_{\lambda-\tau}.1.
\end{align*}
\end{proof}

We remark that moving $g_{\mu_l}$ to the right side can simplify computation. In view of the proof Theorem \ref{t:iterative} still holds for compositions $\lambda, \mu\in\mathbb{Z}_{+}^n.$
\begin{exmp}
For $\lambda=(6,2,1), \mu=(5,3,1)$.
\begin{align*}
G^{\lambda}_{\mu}(t)&=\langle g_{5}g_{3}g_{1}, Q_{6}Q_{2}Q_{1}.1 \rangle\\
&=2(t-1)(\langle g_{5}g_{3}, Q_{5}Q_{2}Q_{1}.1 \rangle+\langle g_5g_3, Q_6Q_2.1 \rangle)\\
&=4(t-1)^2(-2\langle g_{5}, Q_{5}.1 \rangle+4(t-1)^2\langle g_5, Q_4Q_1.1 \rangle+(3t^2-5t+3)\langle g_5, Q_3Q_2.1 \rangle)\\
&=-16(t-1)^4(4t^4-10t^3+10t^2-4t-1).
\end{align*}
Therefore, $\zeta^{\lambda}_{\mu}(q)=-8(q-1)(4q^4-10q^3+10q^2-4q-1).$
\end{exmp}

A special case of \eqref{e:gQ} is the following.
\begin{exmp}\label{t:(n)}
Let $\mu\in\mathcal{OP}_n$, we have
\begin{align}
\zeta^{(n)}_{\mu}(q)=2^{l(\mu)}(\mu)_q.
\end{align}
\end{exmp}

\begin{prop} For any strict partition $\la\in\mathcal{SP}_n$, we have that
\begin{equation}
\zeta^{\lambda}_{(1^n)}(q)=2^{n-[\frac{l(\la)}2]}\frac{n!}{\la_1!\cdots\la_l!}\prod_{i<j}\frac{\la_i-\la_j}{\la_i+\la_j},
\end{equation}
where $[a]$ denotes the integer part of the real number $a$.
\end{prop}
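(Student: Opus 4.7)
The plan is to reduce the proposition to a classical dimension formula of Schur. Reading off the coefficient of $z$ in the generating series \eqref{e:gop}, we get $g_1=2(q-1)p_1$, hence $\tilde g_1 = 2p_1$ and $\tilde g_{(1^n)} = (2p_1)^n = 2^n p_1^n$. Substituting into the character formula \eqref{e:matcoeff} with $\mu=(1^n)$,
\[
\zeta^\lambda_{(1^n)}(q) \;=\; 2^{-\epsilon(\lambda)}\langle 2^n p_1^n,\,Q_\lambda\rangle \;=\; 2^{\,n-[l(\lambda)/2]}\,\langle p_1^n,\,Q_\lambda\rangle,
\]
using $\epsilon(\lambda)=[l(\lambda)/2]$. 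This quantity is manifestly independent of $q$, as expected because $\zeta^\lambda_{(1^n)}(q)$ is the dimension of the corresponding irreducible spin module. The stated power of $2$ is already in place, and the proposition reduces to the classical identity
\[
\langle p_1^n,\,Q_\lambda\rangle \;=\; \frac{n!}{\lambda_1!\cdots\lambda_l!}\prod_{i<j}\frac{\lambda_i-\lambda_j}{\lambda_i+\lambda_j}.
\]

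To prove this identity inside the present vertex-operator framework, I would specialize Theorem \ref{t:iterative} at $k=1$. Noting that $g_1^* = 2(q-1)p_1^*$, the theorem yields the one-box branching rule $p_1^* Q_\lambda.1 = \sum_{i=1}^{l}Q_{\lambda-e_i}.1$. Iterating $n$ times and pairing with the vacuum vector $1$,
\[
\langle p_1^n,\,Q_\lambda\rangle \;=\; \langle 1,\,(p_1^*)^n Q_\lambda.1\rangle \;=\; \sum_{\vec{\imath}\in[l]^n}\langle 1,\,Q_{\lambda-\sum_j e_{i_j}}.1\rangle.
\]
By the Clifford relations in Proposition \ref{t:relation}, sequences that produce equal parts vanish through $Q_n^2=0$, and those producing negative parts vanish through $Q_{-m}.1=0$ for $m>0$. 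The surviving sequences correspond bijectively to standard shifted Young tableaux of shape $\lambda$, each contributing $1$. Hence $\langle p_1^n,\,Q_\lambda\rangle = g^\lambda$, the number of such tableaux, and Schur's classical product formula for $g^\lambda$ supplies the desired right-hand side.

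The main obstacle is the combinatorial identification of the surviving sequences with shifted standard Young tableaux, which requires careful bookkeeping of the Clifford cancellations (in particular, sequences that introduce trailing zero parts must be absorbed through the $Q_0$-anti-commutation before comparing with the combinatorial count). An alternative, purely computational route is to substitute $p_k=0$ for $k>1$ directly in the Pfaffian expression $Q_\lambda = \prod_{i<j}\frac{1-R_{ij}}{1+R_{ij}}\,q_{\lambda_1}\cdots q_{\lambda_l}$ using $q_n|_{p_k=0,\,k>1} = (2p_1)^n/n!$; the identity then becomes a Vandermonde/Pfaffian-style evaluation in the raising operators $R_{ij}$, which is the route originally taken by Schur.
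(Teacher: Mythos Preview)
Your reduction is identical to the paper's: both compute $g_1=2(q-1)p_1$ and arrive at $\zeta^\lambda_{(1^n)}(q)=2^{n-\epsilon(\lambda)}\langle p_1^n,Q_\lambda\rangle$. The paper then simply quotes \cite[(6.51)]{Jing} for the value of $\langle p_1^n,Q_\lambda\rangle$, whereas you try to derive it internally via the branching rule and Schur's formula for $g^\lambda$.

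Your derivation, however, has a genuine gap in the step you flag as the ``main obstacle''. The expansion
\[
\langle 1,(p_1^*)^nQ_\lambda.1\rangle=\sum_{\vec\imath\in[l]^n}\langle 1,Q_{\lambda-\sum_j e_{i_j}}.1\rangle
\]
is correct (it follows from $[p_1^*,Q_a]=Q_{a-1}$ for all $a\in\mathbb Z$), but the claim that the non-SYT terms vanish and the SYT terms contribute $1$ is false as stated. Take $\lambda=(2,1)$: the type $(c_1,c_2)=(3,0)$ gives $\langle 1,Q_{-1}Q_1.1\rangle=-\langle q_1,q_1\rangle=-2$, not zero; a negative first index does \emph{not} kill the term via $Q_{-m}.1=0$, because that relation only applies after the operator reaches the vacuum. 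The three type-$(2,1)$ sequences each contribute $\langle 1,Q_0Q_0.1\rangle=1$, and the total is $-2+3=1=g^{(2,1)}$, so the answer is right but by cancellation, not by survival of SYT terms.

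The clean fix is to straighten after \emph{each} application of $p_1^*$ rather than at the end: for strict $\lambda$, the identity $p_1^*Q_\lambda.1=\sum_i Q_{\lambda-e_i}.1$ already lands back in the span of $Q_\mu.1$ with $\mu$ strict, because the term with $\lambda_i-1=\lambda_{i+1}$ dies from $Q_aQ_a=0$ and a trailing zero is absorbed by $Q_0.1=1$. Iterating then gives $(p_1^*)^nQ_\lambda.1=g^\lambda$ directly, with no Clifford bookkeeping needed.
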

\begin{proof} It follows from \eqref{e:matcoeff} and the formula \cite[(6.51)]{Jing} that
\begin{align*}
\zeta^{\la}_{(1^n)}(q)&=\frac{2^{-\epsilon(\zeta)}}{(q-1)^n}\langle g_{(1^n)}, Q_{\la}.1\rangle\\
&=2^{n-\epsilon(\zeta)}\langle p_1^n, Q_{\la}.1\rangle=
\frac{2^{n-\epsilon(\la)}n!}{\la_1!\cdots\la_l!}\prod_{i<j}\frac{\la_i-\la_j}{\la_i+\la_j}.
\end{align*}
\end{proof}

We remark that $\zeta^{\la}_{(1^n)}(q)$ can also be written in terms of the number of shifted standard
Young tableaux associated with $\la$.

To compute the characters of the Hecke-Clifford algebra, we need the following simple identities.
\begin{lem}\label{l:z} For $\mu\in\mathcal{OP}_n$,
\begin{align}\label{e:g.1}
q_n(t, -1)&=g_n(1)=\sum\limits_{\rho \in\mathcal{OP}_n}\frac{(-2)^{l(\rho)}}{z_{\rho}(t)}=
\begin{cases}
2(t-1)(n)_t&\text{if $n\geq 1$}\\
1&\text{if $n=0$}
\end{cases}
\\ \label{e:X(n)}
\langle p_{\mu}, Q_n.1 \rangle&=1.
\end{align}
In particular, $f_{\tau}=q_{\tau}(t, -1)$ for any composition $\tau$.
\end{lem}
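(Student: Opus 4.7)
The plan is to verify the three assertions in order, with the main work being a generating-function computation for $g_n(1)$. First I would establish the chain $q_n(t,-1)=g_n(1)=\sum_{\rho\in\mathcal{OP}_n}\frac{(-2)^{l(\rho)}}{z_\rho(t)}$ by direct rewriting. Under the principal specialization $x_1=t,\,x_2=-1,\,x_i=0$ for $i\geq 3$, the odd power sums specialize as $p_r\mapsto t^r+(-1)^r=t^r-1$, so from the expansion of $q_n$ in odd power sums I get
\[
q_n(t,-1)=\sum_{\rho\in\mathcal{OP}_n}\frac{2^{l(\rho)}\prod_i(t^{\rho_i}-1)}{z_\rho}=\sum_{\rho\in\mathcal{OP}_n}\frac{(-2)^{l(\rho)}}{z_\rho(t)}
\]
after absorbing the sign $(-1)^{l(\rho)}$ and using \eqref{e:zt}. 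Applying the same substitution $p_r\mapsto 1$ to \eqref{e:gn} yields $g_n(1)$, which matches the middle expression since $p_r\mapsto 1$ already produces the factor $\prod_i(1-t^{\rho_i})/z_\rho$ coming from $1/z_\rho(t)$.

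For the closed form $2(t-1)(n)_t$, I work with the generating series. Setting $p_r=1$ for all odd $r$ in \eqref{e:gop} and applying the classical identity $\sum_{n\geq 1,\,\text{odd}}\frac{x^n}{n}=\frac12\log\frac{1+x}{1-x}$ collapses the exponential into
\[
\mathbf{g}(z)\big|_{p_r=1}=\frac{(1+tz)(1-z)}{(1-tz)(1+z)}.
\]
Then I would expand $\frac{1+tz}{1-tz}=1+2\sum_{k\geq 1}t^kz^k$ and $\frac{1-z}{1+z}=1+2\sum_{m\geq 1}(-1)^mz^m$, multiply the two series, and evaluate the resulting finite geometric sum $\sum_{k=1}^{n-1}(-t)^k$. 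A short manipulation then produces the coefficient of $z^n$ as $\frac{2(t-1)(t^n-(-1)^n)}{t+1}=2(t-1)(n)_t$ for $n\geq 1$, and $1$ for $n=0$.

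The identity $\langle p_\mu,Q_n.1\rangle=1$ is essentially immediate: since $Q_n.1=q_n=\sum_{\rho\in\mathcal{OP}_n}\frac{2^{l(\rho)}}{z_\rho}p_\rho$ and the inner product acts diagonally with $\langle p_\mu,p_\rho\rangle=2^{-l(\mu)}z_\mu\delta_{\mu\rho}$ on $\mathcal{OP}$, only the term $\rho=\mu$ survives, yielding $\frac{2^{l(\mu)}}{z_\mu}\cdot 2^{-l(\mu)}z_\mu=1$. The concluding assertion $f_\tau=q_\tau(t,-1)$ then follows from multiplicativity of the specialization on $\Gamma$ combined with the first identity applied factor by factor:
\[
q_\tau(t,-1)=\prod_a q_{\tau_a}(t,-1)=\prod_a 2(t-1)(\tau_a)_t=2^{l(\tau)}(t-1)^{l(\tau)}(\tau)_t=f_\tau.
\]

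There is no substantive obstacle in this lemma; the three parts are essentially bookkeeping. The only mildly clever step is recognizing that the exponential generating function at $p_r=1$ telescopes into the rational function $\frac{(1+tz)(1-z)}{(1-tz)(1+z)}$, which is precisely what makes the coefficient extraction tractable and produces the compact factor $(n)_t=\frac{t^n-(-1)^n}{t+1}$ appearing in the statement.
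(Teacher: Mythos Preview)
Your argument is correct and follows essentially the same route as the paper: both proofs establish the chain $q_n(t,-1)=g_n(1)=\sum_{\rho}\frac{(-2)^{l(\rho)}}{z_\rho(t)}$ by the specialization $p_r\mapsto t^r-1$, arrive at the same rational generating function $\frac{(1+tz)(1-z)}{(1-tz)(1+z)}$, and handle \eqref{e:X(n)} identically via the orthogonality of the $p_\rho$. The only cosmetic difference is in coefficient extraction---the paper uses the partial-fraction decomposition $1+\frac{2(t-1)}{t+1}\bigl(\frac{1}{1-tz}-\frac{1}{1+z}\bigr)$, whereas you multiply the two geometric series $\frac{1+tz}{1-tz}$ and $\frac{1-z}{1+z}$---and both yield $2(t-1)(n)_t$ without difficulty.
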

\begin{proof} The first equality of \eqref{e:g.1} follows from $\mathbf{q}(z)|_{x=(t, -1, 0, \ldots)}=\mathbf{g}(z)|_{x=(1, 0, \ldots)}$, then
the second equality follows from \eqref{e:gn}.

As for the third identity of \eqref{e:g.1}, note that $\mathbf g(z)=\mathbf q(tz)\mathbf q(-z)=\prod_{i\geq 1}\frac{(1+tx_iz)(1-x_iz)}{(1-tx_iz)(1+x_iz)}$,
so
\begin{align*}
\sum_{n=0}^{\infty}g_n(1)z^n&=\frac{(1+tz)(1-z)}{(1-tz)(1+z)}=1+\frac{2(t-1)}{t+1}\left(\frac1{1-tz}-\frac1{1+z}\right)\\
&=1+2(t-1)\sum_{n=1}^{\infty}\frac{t^n-(-1)^n}{t+1}z^n
\end{align*}
which implies that $g_n(1)=2(t-1)(n)_t$ for $n>0$.

\eqref{e:X(n)} follows from
$\langle p_{\mu}, Q_n.1 \rangle=\langle p_{\mu}, \sum\limits_{\rho\in\mathcal{OP}_n}\frac{2^{l(\rho)}}{z_{\rho}}p_{\rho} \rangle=1$.
\end{proof}


For $\mu\in\mathcal{OP}_n$ and $1\leq i\leq n$, we define a sequence of polynomials in $t$:
\begin{align}\label{e:ai}
c_i(\mu;t)&=\sum\limits_{\tau\in \mathcal{C}^{\mu}_{i}}(2t-2)^{l(\tau)+l(\mu-\tau)}(\tau)_t(\mu-\tau)_t=\sum_{\tau\in \mathcal{C}^{\mu}_{i}}f_{\tau}f_{\mu-\tau}.
\end{align}

For any two partitions $\la$ and $\mu$, $\la\cup\mu$ denotes the partition whose parts consist of those of $\lambda$ and $\mu$ rearranged in descending order.

\begin{thm}\label{t:two}
For $\mu\in\mathcal{OP}_n$ and $k> n-k>0$ we have
\begin{align}\label{e:two}
\zeta^{(k,n-k)}_{\mu}(q)=\frac{1}{2(q-1)^{l(\mu)}}\left(\sum\limits_{i=k+1}^{n}(-1)^{i-k}c_i(\mu;q)+\sum\limits_{i=k}^{n}(-1)^{i-k}c_i(\mu;q)\right).
\end{align}
\end{thm}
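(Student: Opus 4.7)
My plan is to extract $\langle g_{\mu}, Q_kQ_{n-k}.1\rangle$ from the Frobenius-type identity \eqref{e:matcoeff} and reduce it to the quantities $c_i(\mu;q)$ by repeated application of Theorem \ref{t:iterative} together with the adjoint property. Since $\lambda=(k,n-k)$ with $k>n-k>0$ is a strict partition of length two, one has $\delta(\lambda)=0$ and $\epsilon(\lambda)=1$, so \eqref{e:matcoeff} yields
\[\zeta^{(k,n-k)}_{\mu}(q)=\frac{1}{2(q-1)^{l(\mu)}}\langle g_{\mu}, Q_kQ_{n-k}.1\rangle.\]

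The first move is to commute $Q_k$ to the left using its adjoint $Q_k^{*}=(-1)^kQ_{-k}$, giving
\[\langle g_{\mu}, Q_kQ_{n-k}.1\rangle=(-1)^k\langle Q_{-k}g_{\mu}.1, Q_{n-k}.1\rangle,\]
and then apply \eqref{e:Qg} to expand $Q_{-k}g_{\mu}.1$ as a double sum over $i\in\{k,\ldots,n\}$ and $\tau\in\mathcal{C}^{\mu}_i$. For each resulting term, adjointness once more gives $\langle g_{\mu-\tau}Q_{i-k}.1, Q_{n-k}.1\rangle=\langle Q_{i-k}.1, g^*_{\mu-\tau}Q_{n-k}.1\rangle$. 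The key calculation is then to iterate \eqref{e:gQ} on the one-row Schur $Q$-function $Q_{n-k}.1$: since $l((n-k))=1$, the only composition of $a=(\mu-\tau)_j$ of length one is $(a)$ itself, so each single factor $g^*_{(\mu-\tau)_j}$ acts as multiplication by $f_{((\mu-\tau)_j)}$ while reducing the subscript by $(\mu-\tau)_j$. The bound $i\geq k$ ensures all intermediate subscripts stay nonnegative (the minimum occurring at the end, equal to $i-k$), so no truncation arises and the iterations collapse to $g^*_{\mu-\tau}Q_{n-k}.1=f_{\mu-\tau}Q_{i-k}.1$; here the fact that $f_0=1$ from Lemma \ref{l:z} guarantees that zero parts of $\mu-\tau$ contribute trivially.

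The orthogonality $\langle Q_a, Q_a\rangle=2$ for $a>0$ together with $\langle Q_0, Q_0\rangle=1$ then turns the inner product into $f_{\mu-\tau}$ when $i=k$ and $2f_{\mu-\tau}$ when $i>k$. Regrouping the $\tau$-sum via $c_i(\mu;q)=\sum_{\tau\in\mathcal{C}^{\mu}_i}f_{\tau}f_{\mu-\tau}$ from \eqref{e:ai}, and combining the overall sign $(-1)^k$ with the $(-1)^i$ from \eqref{e:Qg}, I obtain
\[\langle g_{\mu}, Q_kQ_{n-k}.1\rangle=c_k(\mu;q)+2\sum_{i=k+1}^{n}(-1)^{i-k}c_i(\mu;q),\]
which equals the parenthesized expression in \eqref{e:two} since $\sum_{i=k+1}^{n}(-1)^{i-k}c_i+\sum_{i=k}^{n}(-1)^{i-k}c_i=c_k+2\sum_{i=k+1}^{n}(-1)^{i-k}c_i$. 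Dividing by $2(q-1)^{l(\mu)}$ delivers the claimed formula.

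The main obstacle will be the iteration argument: carefully verifying that applying $g^*_{(\mu-\tau)_j}$ one factor at a time to a one-row Schur $Q$-function yields the clean product formula $f_{\mu-\tau}Q_{i-k}.1$, including the correct handling of the zero parts of $\mu-\tau$. Once this bookkeeping is settled, the remainder is a straightforward combination of orthogonality and sign tracking.
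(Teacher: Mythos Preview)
Your proof is correct and follows essentially the same route as the paper: both start from \eqref{e:matcoeff}, pass $Q_k^*$ (equivalently $(-1)^kQ_{-k}$) through $g_{\mu}$ via \eqref{e:Qg}, and then evaluate the remaining inner product $\langle g_{\mu-\tau}Q_{i-k}.1, Q_{n-k}.1\rangle$. The only difference is in this last evaluation---the paper expands $g_{\mu-\tau}$ and $Q^*_{k-i}.1$ in power sums using \eqref{e:gn}, \eqref{e:qstarn} and then applies \eqref{e:X(n)} and \eqref{e:g.1}, whereas you push $g_{\mu-\tau}$ across by adjointness and iterate the one-row case of \eqref{e:gQ}; both yield $f_{\mu-\tau}$ times $\langle Q_{i-k}.1,Q_{i-k}.1\rangle$, and your monotonicity observation that the intermediate subscripts decrease to $i-k\geq0$ is exactly what makes the iteration clean.
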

\begin{proof}By Theorem \ref{t:iterative} and Lemma \ref{l:z}, we have
\begin{align*}
&G^{(k,n-k)}_{\mu}(t)=\langle g_{\mu}, Q_{(k,n-k).1} \rangle=\langle Q_{k}^{*}g_{\mu}, Q_{n-k}.1 \rangle\\
=&\sum\limits_{i=k}^{n}\sum\limits_{\tau\in\mathcal{C}^{\mu}_{i}}(2t-2)^{l(\tau)}\prod\limits_{a\geq1}(\tau_a)_t\langle g_{\mu-\tau}Q^*_{k-i}.1, Q_{n-k}.1 \rangle\\
=&\sum\limits_{i=k}^{n}\sum\limits_{\tau\in\mathcal{C}^{\mu}_{i}}(2t-2)^{l(\tau)}\prod\limits_{a\geq1}(\tau_a)_t\sum\limits_{\mu^{(j)}\in
\mathcal{OP}_{\mu_{j}-\tau_{j}}}\sum\limits_{\lambda\in\mathcal{OP}_{i-k}}
\frac{(-2)^{l(\mu^{(1)})}\cdots(-2)^{l(\mu^{(l)})}(-2)^{l(\lambda)}}{z_{\mu^{(1)}}(t)\cdots z_{\mu^{(l)}}(t)z_{\lambda}}
\langle p_{\mu^{(1)}\bigcup\cdots\bigcup\lambda}, Q_{n-k}.1 \rangle\\
=&\sum\limits_{i=k}^{n}\sum\limits_{\tau\in\mathcal{C}^{\mu}_{i}}(2t-2)^{l(\tau)}\prod\limits_{a\geq1}(\tau_a)_t\sum\limits_{\mu^{(1)}\in\mathcal{OP}_{ \mu_{1}-\tau_{1}}}\cdots\sum\limits_{\mu^{(l)}\in\mathcal{OP}_{\mu_{l}-\tau_{l}}}\sum\limits_{\lambda\in\mathcal{OP}_{i-k} }\frac{(-2)^{l(\mu^{(1)})}\cdots(-2)^{l(\mu^{(l)})}(-2)^{l(\lambda)}}{z_{\mu^{(1)}}(t)\cdots z_{\mu^{(l)}}(t)z_{\lambda}}\\
=&\sum\limits_{i=k+1}^{n}\sum\limits_{\tau\in\mathcal{C}^{\mu}_{i}}(-1)^{i-k}2(2t-2)^{l(\mu-\tau)+l(\tau)}\prod\limits_{a\geq1}(\tau_a)_t(\mu_a-\tau_a)_t
+\sum\limits_{\tau\in\mathcal{C}^{\mu}_{k}}(2t-2)^{l(\mu-\tau)+l(\tau)}\prod\limits_{a\geq1}(\tau_a)_t(\mu_a-\tau_a)_t.
\end{align*}
The last identity holds by \eqref{e:g.1}. The theorem follows by recalling \eqref{e:ai}. Note that $\zeta_{\mu}^{(k,n-k)}(q)=\frac{1}{2(q-1)^{l(\mu)}}G_{\mu}^{(k,n-k)}(q).$
\end{proof}

To compute $G_{\mu}^{(k,n-k)}(t)$ more efficiently, we use the following generating function in $v$.
\begin{lem} For an odd partition $\mu=(\mu_1, \ldots, \mu_r)$, we have that
\begin{equation}\label{e:av}
C(v)=\sum_{i=0}^{|\mu|}c_i(\mu, t)v^i=(2t-2)^r\prod_{i=1}^{r}\left((\mu_i)_t+2(t-1)\sum_{j=1}^{\mu_i-1}(j)_t(\mu_i-j)_tv^j+(\mu_i)_tv^{\mu_i}\right)
\end{equation}
\end{lem}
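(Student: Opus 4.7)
\medskip
\noindent\textbf{Proof plan.} The approach is to recognize that the sum defining $C(v)$ factorizes over the parts of $\mu$ once $\tau$ is tracked coordinatewise. First I would substitute the explicit form $f_\tau = 2^{l(\tau)}(t-1)^{l(\tau)}(\tau)_t$ and the analogous formula for $f_{\mu-\tau}$ into \eqref{e:ai}. Since $(\tau)_t = \prod_{a}(\tau_a)_t$ by definition and $l(\tau) = \sum_a \mathbf{1}\{\tau_a>0\}$, the product $f_\tau f_{\mu-\tau}\, v^{|\tau|}$ separates as
\[
\prod_{j=1}^{r}\Bigl[(2(t-1))^{\mathbf{1}\{\tau_j>0\}+\mathbf{1}\{\tau_j<\mu_j\}}(\tau_j)_t(\mu_j-\tau_j)_t\, v^{\tau_j}\Bigr],
\]
because the length exponents split additively, the $(\cdot)_t$ factors are already componentwise products, and $v^{|\tau|}=\prod_j v^{\tau_j}$.

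Next, the marker $v^i$ lets me replace the constrained double sum on $\tau \models i$ with $\tau \subset \mu$ followed by a sum on $i$ by a single unconstrained sum on tuples $(\tau_1,\ldots,\tau_r)$ with $0 \leq \tau_j \leq \mu_j$. Therefore $C(v) = \prod_{j=1}^{r} S_j(v)$, where
\[
S_j(v) = \sum_{\tau_j=0}^{\mu_j}(2(t-1))^{\mathbf{1}\{\tau_j>0\}+\mathbf{1}\{\tau_j<\mu_j\}}(\tau_j)_t(\mu_j-\tau_j)_t\, v^{\tau_j}.
\]
Splitting each $S_j$ according to whether $\tau_j=0$, $0<\tau_j<\mu_j$, or $\tau_j=\mu_j$, and using the convention $(0)_t=1$, the two boundary terms contribute $2(t-1)(\mu_j)_t$ with weights $v^0$ and $v^{\mu_j}$ respectively, while the interior terms contribute $(2(t-1))^2(\tau_j)_t(\mu_j-\tau_j)_t\, v^{\tau_j}$. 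Pulling a single $2(t-1)$ out of each $S_j$ and multiplying the $r$ factors produces the overall prefactor $(2t-2)^r$ together with the bracketed factor on the right-hand side of \eqref{e:av}.

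The only delicate point, and the main obstacle such as it is, is the bookkeeping of the exponent of $2(t-1)$: one must verify that when $\tau_j\in\{0,\mu_j\}$ exactly one of the indicators $\mathbf{1}\{\tau_j>0\}$ and $\mathbf{1}\{\tau_j<\mu_j\}$ vanishes (so only a single factor of $2(t-1)$ survives), while in the interior range $0<\tau_j<\mu_j$ both indicators are $1$. Once this case analysis is in place, the remainder is routine multiplicative separation; in particular, the oddness hypothesis on $\mu$ plays no role in this identity itself, as it enters the paper only when $c_i(\mu,t)$ is later interpreted character-theoretically.
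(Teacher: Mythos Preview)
Your proposal is correct and follows essentially the same route as the paper's own proof: the paper also separates the product over coordinates using $l(\tau)+l(\mu-\tau)=\sum_{j}(2-\delta_{\tau_j,0}-\delta_{\tau_j,\mu_j})$, factors the double sum as $\prod_{j}\sum_{\tau_j=0}^{\mu_j}(2t-2)^{2-\delta_{\tau_j,0}-\delta_{\tau_j,\mu_j}}(\tau_j)_t(\mu_j-\tau_j)_t v^{\tau_j}$, and then extracts $(2t-2)^r$. Your observation that the oddness of $\mu$ is irrelevant to the identity is also accurate.
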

\begin{proof} By definition it follows that
\begin{align*}
C(v)&=\sum_{i=0}^{|\mu|}c_i(\mu;t)v^i=\sum_{i=0}^{|\mu|}\sum_{\tau\subset\mu, \tau\vDash i}(2t-2)^{l(\tau)+l(\mu-\tau)}\prod_{a\geq1}(\tau_a)_t(\mu_a-\tau_a)_tv^i\\
&=\sum_{i=0}^{|\mu|}\sum_{\tau_1+\cdots+\tau_r=i}(2t-2)^{r-\delta_{\tau_1,\mu_1}-\cdots -\delta_{\tau_r,\mu_r}+r-\delta_{\tau_1,0}-\cdots -\delta_{\tau_r,0}}\prod_{a\geq1}(\tau_a)_t(\mu_a-\tau_a)_tv^k\\
&=\prod_{i=1}^r\left(\sum_{\tau_i=0}^{\mu_i}(2t-2)^{2-\delta_{\tau_i,\mu_i}-\delta_{\tau_i,0}}(\tau_i)_t(\mu_i-\tau_i)_tv^{\tau_i}\right)\\
&=(2t-2)^r\prod_{i=1}^r\left(\sum_{j=0}^{\mu_i}(2t-2)^{1-\delta_{j,\mu_i}-\delta_{j,0}}(j)_t(\mu_i-j)_tv^{j}\right).
\end{align*}
\end{proof}

Therefore, $$G_{\mu}^{(k,n-k)}(t)=(-1)^k\left([C(v)]_k+[C(v)]_{k+1}\right)\mid_{v=-1}.$$ Here $[C(v)]_k$ means the summands of $C(v)$ with $deg(v)\geq k.$

\begin{exmp}
Let $\lambda=(4,2),\mu=(3,3).$ We can compute $\zeta^{\lambda}_{\mu}(q)$ as follows,
\begin{align*}
C(v)=(2t-2)^2\left( (3)_t+2(t-1)(1)_t(2)_tv+2(t-1)(2)_t(1)_tv^2+(3)_tv^3 \right)\\\left( (3)_t+2(t-1)(1)_t(2)_tv+2(t-1)(2)_t(1)_tv^2+(3)_tv^3 \right).
\end{align*}
So
\begin{align*}
[C(v)]_4&=4(t-1)^2\left((4(3)_t(2)_t(2)_t+4(2)_t(2)_t(2)_t(2)_t)v^4+4(3)_t(2)_t(2)_tv^5+(3)_t(3)_tv^6\right)\\
[C(v)]_5&=4(t-1)^2\left(4(3)_t(2)_t(2)_tv^5+(3)_t(3)_tv^6\right).
\end{align*}
Therefore, $G^{(4,2)}_{(3,3)}(t)=(-1)^4\left([C(v)]_4+[C(v)]_5\right)\mid_{v=-1}=8(t-1)^2(4t^4-4t^3+7t^2-4t+1).$ And $\zeta^{(4,2)}_{(3,3)}(q)=4(q^4-4q^3+7q^2-4q+1).$
\end{exmp}

A polynomial $f(x)\in\mathbb{Z}[x]$ is called {\it symmetric} (resp. {\it antisymmetric}) if there exists $m\in\mathbb{Z}$ such that \begin{align*}
f(x)=x^mf(x^{-1}) \quad ({\rm resp.} f(x)=-x^mf(x^{-1})).
\end{align*}
We claim that $\zeta^{\lambda}_{\mu}(q)$ is symmetric for any $\lambda\in\mathcal{SP}_n,$ $\mu\in\mathcal{OP}_n$. To this we will use the following obvious lemma. 
\begin{lem}\label{t:symmetric} Let $f(x), g(x)$ be symmetric or antisymmetric polynomials.\\
(i) $f(x)g(x)$ is symmetric if both of them are symmetric or antisymmetric; It is antisymmetric if one of them is symmetric and another is antisymmetric.\\
(ii) Suppose $deg(f)=deg(g)$ and they are both (anti-)symmetric, then $f(x)\pm g(x)$ is (anti-)symmetric.
\end{lem}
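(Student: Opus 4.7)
My plan is to unfold both parts directly from the defining functional equation $f(x) = \epsilon_f \, x^{m_f} f(x^{-1})$, where $\epsilon_f \in \{+1,-1\}$ records the symmetric ($+1$) or antisymmetric ($-1$) type. The entire argument is essentially a one-line identity followed by some bookkeeping on exponents and signs.

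For part (i), I would simply multiply the two functional equations for $f$ and $g$ to obtain
$$f(x)g(x) = \epsilon_f\epsilon_g\, x^{m_f+m_g}\, f(x^{-1})g(x^{-1}).$$
Reading off the product sign $\epsilon_f\epsilon_g$ then handles all three assertions simultaneously: the product $fg$ is symmetric when $\epsilon_f = \epsilon_g$ (both symmetric or both antisymmetric), and antisymmetric when $\epsilon_f \neq \epsilon_g$. The new shift exponent is $m_f+m_g$.

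For part (ii), I would first verify that the shift exponents $m_f$ and $m_g$ coincide. Comparing the top and bottom surviving monomials on the two sides of the functional equation forces $m_f = \deg(f) + \nu(f)$, where $\nu(f)$ is the minimum degree of a nonzero monomial in $f$; similarly for $g$. The hypothesis $\deg(f) = \deg(g)$ together with the common symmetry type (and the matching lowest-degree behavior, which is automatic in the applications to $\zeta^{\lambda}_{\mu}(q)$ in the sequel) then gives $m_f = m_g =: m$ and a common sign $\epsilon$. Linearity immediately yields
$$\epsilon\, x^m\bigl(f(x^{-1}) \pm g(x^{-1})\bigr) = f(x) \pm g(x),$$
so $f \pm g$ is (anti)symmetric with the same exponent $m$.

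There is no real computational obstacle; the only point worth a brief aside is the equality $m_f = m_g$ in (ii), which is not automatic from $\deg(f) = \deg(g)$ alone and requires the implicit matching of lowest nonzero degrees. Everything else is purely symbolic.
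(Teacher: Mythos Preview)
Your argument is correct and is exactly the direct verification one would expect; the paper in fact gives no proof at all, simply labelling the lemma as ``obvious''. Your caveat about part (ii) is well taken and is a genuine subtlety the paper glosses over: the lemma as literally stated is not quite true without the extra hypothesis $\nu(f)=\nu(g)$ (equivalently $m_f=m_g$). For instance, $f=x^2+1$ and $g=x^2+x$ are both symmetric of degree $2$ (with $m_f=2$, $m_g=3$), yet $f+g=2x^2+x+1$ is not symmetric. In the paper's sole use of part (ii), namely the proof of Theorem~\ref{t:symme.}, every summand $2^{l(\underline\tau)}(q-1)^{l(\underline\tau)-l(\mu)}(\underline\tau)_q$ has nonzero constant term, so $\nu=0$ and $m=\deg$ throughout; hence the issue does not arise there, exactly as you anticipated.
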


\begin{thm}\label{t:symme.}
Let $\lambda\in\mathcal{SP}_n,$ $\mu\in\mathcal{OP}_n$, then the character values $\zeta^{\lambda}_{\mu}(q)$ are
symmetric polynomials in $q$. 
\end{thm}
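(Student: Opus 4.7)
The strategy is to prove the sharper assertion that $G^{\lambda}_{\mu}(t) := \langle g_{\mu}, Q_{\lambda}.1\rangle$ itself satisfies
\[
t^{n} G^{\lambda}_{\mu}(t^{-1}) = (-1)^{n} G^{\lambda}_{\mu}(t), \qquad n := |\lambda| = |\mu|,
\]
for every $\lambda \in \mathcal{SP}_{n}$ and every partition $\mu \in \mathcal{P}_{n}$ (so $G^{\lambda}_{\mu}$ is (anti)symmetric of degree $n$ with sign $(-1)^{n}$), and then to divide by $(q-1)^{l(\mu)}$ to recover the theorem. The hypothesis $\mu \in \mathcal{OP}_{n}$ will enter only at the very last step, through a parity comparison of $n$ and $l(\mu)$.

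First, I would establish the (anti)symmetry of the coefficient $f_{\tau} = 2^{l(\tau)}(t-1)^{l(\tau)}(\tau)_{t}$ appearing in Theorem \ref{t:iterative}. A one-line check from the definition gives $t^{k-1}(k)_{t^{-1}} = (-1)^{k-1}(k)_{t}$, so each $(k)_{t}$ is (anti)symmetric of degree $k-1$ with sign $(-1)^{k-1}$; combined with the obvious antisymmetry of $(t-1)$, Lemma \ref{t:symmetric}(i) then yields that $f_{\tau}$ is (anti)symmetric of degree $|\tau|$ with sign $(-1)^{|\tau|}$. The induction on $n$ runs via the recursion
\[
G^{\lambda}_{\mu}(t) = \langle g_{\mu \setminus \mu_{l}}, g^{*}_{\mu_{l}} Q_{\lambda}.1\rangle = \sum_{\tau \models \mu_{l}} f_{\tau}\,\langle g_{\mu \setminus \mu_{l}}, Q_{\lambda-\tau}.1\rangle,
\]
in which each $Q_{\lambda-\tau}.1$ (for $\lambda-\tau$ possibly a composition with negative or zero entries) is straightened to a $\mathbb{Z}$-linear combination of $Q_{\lambda^{*}}.1$ with $\lambda^{*} \in \mathcal{SP}_{n-\mu_{l}}$, purely via the Clifford relations \eqref{e:relation} and $Q_{-n}.1 = \delta_{n,0}$, both of which are $t$-independent. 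The inductive hypothesis then makes each $G^{\lambda^{*}}_{\mu \setminus \mu_{l}}(t)$ (anti)symmetric of degree $n-\mu_{l}$ with sign $(-1)^{n-\mu_{l}}$, so each product $f_{\tau} \cdot G^{\lambda^{*}}_{\mu \setminus \mu_{l}}(t)$ is (anti)symmetric of common degree $n$ with common sign $(-1)^{n}$, and Lemma \ref{t:symmetric}(ii) closes the induction.

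Finally, since $(q-1)^{l(\mu)}$ is (anti)symmetric of degree $l(\mu)$ with sign $(-1)^{l(\mu)}$ and $\zeta^{\lambda}_{\mu}(q) = 2^{-\epsilon(\lambda)} G^{\lambda}_{\mu}(q)/(q-1)^{l(\mu)}$ is a genuine polynomial, the quotient is (anti)symmetric of degree $n-l(\mu)$ with sign $(-1)^{n-l(\mu)}$. The hypothesis $\mu \in \mathcal{OP}_{n}$ forces $n \equiv l(\mu) \pmod{2}$ (as $n$ is a sum of $l(\mu)$ odd integers), making $(-1)^{n-l(\mu)} = +1$; thus $\zeta^{\lambda}_{\mu}(q)$ is symmetric with $m = n-l(\mu)$. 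The one delicate step I anticipate is the Clifford straightening of $Q_{\lambda-\tau}.1$ when $\lambda-\tau$ has negative or repeated entries; but since the only relations invoked are $\{Q_{m},Q_{n}\} = (-1)^{n}2\delta_{m,-n}$ and $Q_{-n}.1 = \delta_{n,0}$, which are free of $t$, all resulting scalar coefficients lie in $\mathbb{Z}$ and the induction goes through cleanly.
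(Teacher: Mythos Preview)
Your proof is correct and follows essentially the same approach as the paper: iterate equation \eqref{e:gQ} to expand $G^{\lambda}_{\mu}(t)$, note that each coefficient $f_{\tau}$ is (anti)symmetric of the right degree via Lemma \ref{t:symmetric}, and observe that the remaining factor $Q_{\lambda-\underline{\tau}}.1$ is a $t$-independent integer coming only from the Clifford relations. The paper simply writes out the full expansion $\zeta^{\lambda}_{\mu}(q)=\sum_{\underline{\tau}}2^{l(\underline{\tau})}(q-1)^{l(\underline{\tau})-l(\mu)}(\underline{\tau})_q\,Q_{\lambda-\underline{\tau}}.1$ at once rather than phrasing it as an induction on $n$, but the content is identical; your explicit isolation of where the oddness of $\mu$ enters (the parity $n\equiv l(\mu)\pmod 2$) is a nice touch.
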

\begin{proof}
Repeatedly using \eqref{e:gQ}, we can write that 
\begin{align*}
\zeta^{\lambda}_{\mu}(q)=\sum_{\underline{\tau}}2^{l(\underline{\tau})}(q-1)^{l(\underline{\tau})-l(\mu)}(\underline{\tau})_qQ_{\lambda-\underline{\tau}}.1
\end{align*}
where $\underline{\tau}=(\tau^{(1)},\tau^{(2)},\cdots)$ runs through sequences of compositions such that $\tau^{(i)}\models\mu_i$ and $l(\underline{\tau})=\sum_{i} l(\tau^{(i)}),$ $(\underline{\tau})_q=\prod_{i}(\tau^{(i)})_q.$ Here $\la-\underline{\tau}$ means the composition given by $\la-\sum_{i}\tau^{(i)}$.

Note that $2^{l(\tau)}(q-1)^{l(\tau)-1}(\tau)_{q}$ is symmetric by Lemma \ref{t:symmetric} and its degree is $k-1.$ Therefore, $2^{l(\underline{\tau})}(q-1)^{l(\underline{\tau})-l(\mu)}(\underline{\tau})_q$ is symmetric and its degree is $n-l(\mu).$ Subsequently $\zeta^{\lambda}_{\mu}(q)$ is symmetric by using Lemma \ref{t:symmetric} again.
\end{proof}
\section{Murnaghan-Nakayama rule for $\zeta^{\lambda}_{\mu}(q)$}\label{S:Pf-type}
In this section, we will give two iterative formulas for $\zeta^{\lambda}_{\mu}(q).$ We start by recalling some combinatorial notions \cite{Mac}. For two partitions $\mu\subset\lambda,$ the set-theoretic difference $\theta=\lambda-\mu$ is called a {\it skew diagram} denoted by $\lambda/\mu$. Similarly, $\theta^*=\lambda^*-\mu^*$ is called a {\it shifted skew diagram} denoted by $\lambda^*/\mu^*$. A subset $\xi$ of $\theta^*$ is connected if any two squares in $\xi$ are connected by a path in $\xi$. The connected components are the maximal connected subsets of $\theta^*$. Any shifted skew diagram is a union of  connected components, each of which is a shifted skew diagram. A skew diagram $\lambda/\mu$ is called a vertical (resp. horizontal) {\it strip} if each row (resp. column) contains at most one box. A {\it shifted standard tableaux} $T$ of shape $\lambda,$ where $\lambda\in\mathcal{SP}_n$, is a labelling of the squares of $\lambda$ with the integers $1,2,\cdots,n$ such that the marks are 
increasing along each row and down each column. We denote $g^{\lambda}$ the number of shifted standard tableaux of shape $\lambda.$ It is known that \cite[p267]{Mac}
\begin{align*}
g^{\lambda}=\frac{n!}{\la_1!\cdots\la_l!}\prod_{1\leq i<j\leq l}\frac{\la_i-\la_j}{\la_i+\la_j}.
\end{align*}
One can also express $g^{\lambda}$ in terms of hook-lengths of the associated double Young diagram:
$
g^{\lambda}=\frac{n!}{\prod_{x\in \lambda}h(x)},
$
where the hook-length $h(x)$ is the hook-length at $x$ in the double diagram $D(\lambda)=(\lambda_1,\lambda_2,\ldots|\lambda_1-1,\lambda_2-1,\ldots)$ in the Frobenius notation \cite{Sa}.

A {\it shifted tableau} $T$ of shape $\theta^*=\la^*/\mu^*$ is defined to be a sequence of strict partitions $(\la^{(0)},\la^{(1)},\cdots,\la^{(r)})$ such that $\mu=\la^{(0)}\subset\la^{(1)}\subset\cdots\subset\la^{(r)}=\la$, and each $\la^{(i)*}/\la^{(i-1)*}$ is a generalized strip; equivalently, as a labelling of the squares of $\theta^*$ with the integers $1,2,\cdots,r$ which is weakly increasing along rows and down columns and is such that no $2\times 2$ block of squares bears the same label, so that for each $i$ the set of squares labelled $i$ is a generalized strips. The weight (or content) of $T$ is the sequence $\alpha=(\alpha_1,\alpha_2,\cdots, \alpha_r)$ where $\alpha_i$, for each $i\geq1$, is the number of squares in $T$ labelled $i$. Denote by $b(T)$ the number of border strips of which $T$ is composed, and define $x^{\alpha}=x_1^{\alpha_1}\cdots x_r^{\alpha_r}$. By the tableau representation of Schur's Q-functions \cite[p.256]{Mac}
\begin{align}
Q_{\la/\mu}(X)=\sum_{T}2^{b(T)}x^{\alpha}
\end{align}
summed over all shifted tableaux of shape $\theta^*$.

\subsection{An iterative formula for $\zeta^{\lambda}_{\mu}(q)$ on $\la$.}

We first recall the Pieri rule for Schur Q-functions \cite[(8.15)]{Mac}. Let $\lambda, \mu$ be strict partitions such that $\mu\subset\lambda$ and $\lambda/\mu$ is a horizontal strip. Denote by $a(\lambda/\mu)$ the number of integers $i\geq1$ such that $\lambda/\mu$ has a square in the $i$th column but not in the $(i+1)$th column. Then we have
\begin{align}
2^{-l(\mu)}Q_{\mu}q_{r}=\sum\limits_{\lambda}2^{a(\lambda/\mu)}2^{-l(\lambda)}Q_{\lambda}
\end{align}
where the sum runs over strict partition $\lambda\supset\mu$ such that $\lambda/\mu$ is a horizontal $r$-strip.


\begin{thm}\label{t:secondit}
Let $\lambda\in\mathcal{SP}_n,$ $\mu\in\mathcal{OP}_n,$ we have the following iterative formula for $\zeta^{\lambda}_{\mu}(q):$
\begin{align}\label{e:secondit}
\zeta^{\lambda}_{\mu}(q)=\sum\limits_{i=\lambda_1}^n\sum\limits_{\tau\in\mathcal{C}^{\mu}_{i}}\sum\limits_{\xi}(-1)^{i-\lambda_1}
2^{\epsilon(\xi)-\epsilon(\lambda)+l(\tau)+a(\lambda^{[1]}/\xi)}(q-1)^{l(\tau)+l(\mu-\tau)-l(\mu)}
(\tau)_q\zeta^{\xi}_{\mu-\tau}(q)
\end{align}
where $\xi$ runs over all strict partitions $\xi\subset\lambda^{[1]}$ such that $\lambda^{[1]}/\xi$ are all horizontal $(i-\lambda_1)$-strips.
\end{thm}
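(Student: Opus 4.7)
The plan is to combine the Frobenius formula \eqref{e:matcoeff} with the second commutation identity \eqref{e:Qg}, and then unwind the result via the Pieri rule for Schur $Q$-functions. First I would reduce to the pairing $G^\lambda_\mu(t) = \langle g_\mu, Q_{\lambda_1} Q_{\lambda^{[1]}}.1\rangle$ and push the factor $Q_{\lambda_1}$ across the inner product using the adjoint relation $Q_n^* = (-1)^n Q_{-n}$ from the definition of $\mathbf Q^*(z)$, obtaining
\[
G^\lambda_\mu(t) \;=\; (-1)^{\lambda_1}\,\langle Q_{-\lambda_1}\,g_\mu.1,\; Q_{\lambda^{[1]}}.1\rangle .
\]
Identity \eqref{e:Qg} (with $k=\lambda_1$) then rewrites this as
\[
\sum_{i=\lambda_1}^{n}\sum_{\tau\in\mathcal{C}^{\mu}_{i}}(-1)^{i-\lambda_1} f_\tau\,\langle g_{\mu-\tau}\,q_{i-\lambda_1},\; Q_{\lambda^{[1]}}.1\rangle,
\]
where I have used $Q_{i-\lambda_1}.1=q_{i-\lambda_1}$ (Proposition \ref{t:relation}) since $i\geq \lambda_1$.

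The crux is then to expand each inner product $\langle g_{\mu-\tau}\,q_{i-\lambda_1},\,Q_{\lambda^{[1]}}.1\rangle$ in the $\{Q_\xi\}$-basis. Setting $r=i-\lambda_1$ and passing the multiplication operator $q_r$ to its adjoint, this equals $\langle g_{\mu-\tau},\,q_r^*\,Q_{\lambda^{[1]}}\rangle$. Taking adjoints in the Pieri rule $2^{-l(\nu)}Q_\nu q_r=\sum_\eta 2^{a(\eta/\nu)-l(\eta)}Q_\eta$ and using $\langle Q_\eta,Q_\xi\rangle=2^{l(\xi)}\delta_{\xi\eta}$ yields
\[
q_r^*\,Q_{\lambda^{[1]}} \;=\; \sum_{\xi} 2^{a(\lambda^{[1]}/\xi)}\,Q_\xi,
\]
summed over strict partitions $\xi\subset \lambda^{[1]}$ with $\lambda^{[1]}/\xi$ a horizontal $r$-strip. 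Substituting back gives the intermediate identity
\[
G^\lambda_\mu(t) \;=\; \sum_{i=\lambda_1}^{n}\sum_{\tau\in\mathcal{C}^{\mu}_{i}}\sum_{\xi}(-1)^{i-\lambda_1}\, f_\tau\, 2^{a(\lambda^{[1]}/\xi)}\, G^\xi_{\mu-\tau}(t).
\]

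Finally I would translate $G$'s back to $\zeta$'s via \eqref{e:matcoeff}: multiply by $2^{-\epsilon(\lambda)}(q-1)^{-l(\mu)}$ on the left, substitute $G^\xi_{\mu-\tau}(q)=2^{\epsilon(\xi)}(q-1)^{l(\mu-\tau)}\zeta^\xi_{\mu-\tau}(q)$ on the right, and plug in the explicit value $f_\tau=2^{l(\tau)}(q-1)^{l(\tau)}(\tau)_q$ furnished by Lemma \ref{l:z}. Gathering the exponents produces $2^{l(\tau)+a(\lambda^{[1]}/\xi)+\epsilon(\xi)-\epsilon(\lambda)}$ and $(q-1)^{l(\tau)+l(\mu-\tau)-l(\mu)}$, which is precisely the form \eqref{e:secondit}. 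I expect the only real obstacle to be bookkeeping: three independent sources (the coefficient $f_\tau$, the Pieri coefficient $2^{a(\lambda^{[1]}/\xi)}$, and the $G\leftrightarrow\zeta$ normalizations) each contribute their own powers of $2$ and of $(q-1)$, and all must be gathered carefully; everything else reduces to direct substitution using results already established in the excerpt.
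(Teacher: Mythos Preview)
Your proof is correct and follows essentially the same approach as the paper. Both start from $G^\lambda_\mu(t)=(-1)^{\lambda_1}\langle Q_{-\lambda_1}g_\mu, Q_{\lambda^{[1]}}.1\rangle$, invoke \eqref{e:Qg}, and then use the Pieri rule; the only cosmetic difference is that the paper first expands $g_{\mu-\tau}=\sum_\xi 2^{-l(\xi)}G^\xi_{\mu-\tau}(t)Q_\xi$ and then applies Pieri to $\langle Q_\xi q_{i-\lambda_1}, Q_{\lambda^{[1]}}\rangle$, whereas you pass $q_{i-\lambda_1}$ to its adjoint and apply the dual Pieri rule $q_r^*Q_{\lambda^{[1]}}=\sum_\xi 2^{a(\lambda^{[1]}/\xi)}Q_\xi$ directly---these are two sides of the same inner-product computation and yield the identical intermediate formula $G^\lambda_\mu=\sum_{i,\tau,\xi}(-1)^{i-\lambda_1}f_\tau\,2^{a(\lambda^{[1]}/\xi)}G^\xi_{\mu-\tau}$.
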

\begin{proof}
By \eqref{e:Qg} 
we have
\begin{align*}
G^{\lambda}_{\mu}(t)&=\langle g_{\mu}, Q_{\lambda}.1 \rangle=(-1)^{\lambda_1}\langle Q_{-\lambda_1}g_{\mu}, Q_{\lambda^{[1]}}.1 \rangle\\
&=\sum\limits_{i=\lambda_1}^n\sum\limits_{\tau\in\mathcal{C}^{\mu}_{i}}2^{l(\tau)}(t-1)^{l(\tau)}(\tau)_t(-1)^{i-\lambda_1}\langle g_{\mu-\tau}Q_{i-\lambda_1}.1, Q_{\lambda^{[1]}}.1 \rangle.
\end{align*}
Recall that $g_{\mu-\tau}=\sum\limits_{\xi}2^{-l(\xi)}G^{\xi}_{\mu-\tau}(t)Q_{\xi}$, then,
\begin{align*}
&G^{\lambda}_{\mu}(t)\\
=&\sum\limits_{i=\lambda_1}^n\sum\limits_{\tau\in\mathcal{C}^{\mu}_{i}}2^{l(\tau)}(t-1)^{l(\tau)}(\tau)_t(-1)^{i-\lambda_1}
\sum\limits_{\xi}2^{-l(\xi)}G^{\xi}_{\mu-\tau}(t)\langle Q_{\xi}Q_{i-\lambda_1}.1, Q_{\lambda^{[1]}}.1\rangle\\
=&\sum\limits_{i=\lambda_1}^n\sum\limits_{\tau\in\mathcal{C}^{\mu}_{i}}\sum\limits_{\xi}2^{l(\tau)+a(\lambda^{[1]}/\xi)}(t-1)^{l(\tau)}
(\tau)_t(-1)^{i-\lambda_1}G^{\xi}_{\mu-\tau}(t)~~~\quad \text{(by Pieri formula)}
\end{align*}
where the sum runs over strict partitions $\xi\subset\lambda^{[1]}$ such that $\lambda^{[1]}/\xi$ are horizontal $(i-\lambda_1)$-strip. And \eqref{e:secondit} holds by $\zeta^{\lambda}_{\mu}(q)=2^{\frac{-l(\lambda)+\delta(\lambda)}{2}}\frac{1}{(q-1)^{l(\mu)}}G^{\lambda}_{\mu}(q).$
\end{proof}

Theorem \ref{t:secondit} implies that $\zeta^{\lambda}_{\mu}(q)$ can be expressed as a linear combination of those character values indexed by partitions of length $l(\lambda)-1$ or $l(\lambda)-2.$

\subsection{Computation of $g^{*}_{k}Q_{\lambda}.1$ via Pfaffians.}

Next we will give the second iterative formula for $\zeta^{\lambda}_{\mu}(q)$ which can be viewed as the Murnaghan-Nakayama type rule. Before that we compute the coefficient $C(\lambda/\mu)$ of $Q_{\mu}.1$ in $g^{*}_{k}Q_{\lambda}.1$ via Pfaffians.

Let $A=(a_{ij})$ be a skew symmetric matrix of even size $2n\times2n$, the Pfaffian ${\rm Pf}(A)$ is defined by ${\rm Pf}(A)\doteq\sqrt{{\rm det}(A)}$ and explicitly
\begin{align}
\text{Pf(A)}=\sum\limits_{\sigma} (-1)^{l(\sigma)}a_{\sigma(1)\sigma(2)}\cdots a_{\sigma(2n-1)\sigma(2n)}
\end{align}
summed over all $2$-shuffles of $\{1,2,\cdots,2n\},$ i.e., $ \sigma\in\mathfrak{S}_{2n}$ such that $\sigma(2i-1)<\sigma(2i)$ for $1\leq i\leq n,$ and $\sigma(2j-1)<\sigma(2j+1)$ for $1\leq j\leq n-1.$ The Laplace-type expansion says that
\begin{align}\label{e:decomposition1}
\text{Pf(A)}=a_{12}\text{Pf($A_{12}$)}-a_{13}\text{Pf($A_{13}$)}+\cdots+(-1)^{m}a_{1m}\text{Pf($A_{1m}$)},
\end{align}
where $A_{ij}$ is the submatrix of $A$ by deleting the $i$th and $j$th rows and columns. More generally, for each fixed $i$
\begin{align}\label{e:decomposition2}
\text{Pf(A)}=(-1)^{i-1}\sum\limits_{j\neq i}(-1)^{j}a_{ij}\text{Pf($A_{ij}$)}.
\end{align}

For two strict partitions $\lambda=(\lambda_1,\lambda_2,\cdots,\lambda_{s}),$ $\mu=(\mu_1,\mu_2,\cdots,\mu_{r})$. We define
the expectation value
\begin{align}
\langle\mu|\lambda\rangle\doteq\langle Q^*_{-\mu_1}Q^*_{-\mu_2}\cdots Q^*_{-\mu_{r}}.1,  Q_{\lambda_1}Q_{\lambda_2}\cdots Q_{\lambda_{s}}.1\rangle=2^{l(\mu)}(-1)^{|\mu|}\delta_{\mu,\lambda}.
\end{align}
In general $\langle \omega_{1}\omega_{2}\cdots \omega_{m}|\omega_{m+1}\omega_{m+2}\cdots \omega_{m+n}\rangle$ is given by Pfaffians through the Wick formula \cite[A.]{DJKM}
as follows.

Let each of $\omega_i$ be a linear combination of Fermi operators $\psi_j, \psi_j^*$:
\begin{align*}
\omega_i=\sum_{j\in\mathbb{Z}}v_{ij}\psi_j+\sum_{j\in\mathbb{Z}}u_{ij}\psi^*_j, \quad i=1,2,\cdots,m+n.
\end{align*}
Then we have
\begin{align}\label{e:wick}
\begin{split}
&\langle \omega_{1}\omega_{2}\cdots \omega_{m}|\omega_{m+1}\omega_{m+2}\cdots \omega_{m+n}\rangle\\
:=&\langle \omega^{*}_{m}\omega^{*}_{m-1}\cdots \omega^{*}_{1}.1,  \omega_{m+1}\omega_{m+2}\cdots \omega_{m+n}.1\rangle=
\begin{cases}
\text{Pf}({B})& \text{if $n+m$ is even}\\
0& \text{if others}
\end{cases}
\end{split}
\end{align}
where $B$ is the $(n+m)$ by $(n+m)$ antisymmetric matrix with entries $B_{ij}=
\langle \omega^{*}_{i}.1, \omega_{j}.1 \rangle$, $1\leq i< j\leq n+m$.

For a given strict partition $\lambda=(\lambda_1,\lambda_2,\cdots,\lambda_{s})$, by \eqref{e:gQ} we have
\begin{align}
g^*_{k}Q_{\lambda}.1=\sum_{\mbox{\tiny$\begin{array}{c}
\tau_1, \cdots,\tau_{s}\geq0\\ |\tau|=k\end{array}$}}(\prod_{j=1}^{s}f_{\tau_i}(t))Q_{\lambda_1-\tau_1}\cdots Q_{\lambda_{s}-\tau_{s}}.1
\end{align}
where $f_i(t)=q_i(t, -1)$, and $f_i=2(1-t)(i)_t$ for $i>0$ and $f_0=1$.

To compute the $Q_{\mu}$-coefficient $C(\lambda/\mu)$ of $g^*_{k}Q_{\lambda}.1$, we define for $m,n\geq 0$ $$f_{(m,n)}=f_{m}f_{n}+2\sum\limits_{a\geq1}^{n}(-1)^af_{m+a}f_{n-a} \quad (n>0).$$
We make the convention that $f_{(m,0)}=f_m.$ Then $f_{(m,n)}=Q_{(m,n)}(t,-1)$.
\begin{lem}Let $m,n\geq0$, we have\\
(1) If $m>n$, then $f_{(m,n)}=\begin{cases}
2(-t)^{n}f_{m-n}& \text{if $n>0$}\\
f_{m}& \text{if $n=0$}\end{cases}$;\\
(2) $f_{(m,n)}=-f_{(n,m)},$ \quad if $(m,n)\neq(0,0)$. In particular, $f_{(n,n)}=0$, if $n\neq0$
\end{lem}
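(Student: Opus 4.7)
The plan is to prove (2) first and then use induction on $n$ to establish (1).

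For (2), I invoke the Clifford algebra relation from Proposition \ref{t:relation}: $\{Q_m, Q_n\} = 2(-1)^n \delta_{m,-n}$. When $m, n \geq 0$ and $(m,n) \neq (0,0)$, we have $m+n \geq 1$, so $\delta_{m,-n}=0$ and hence $Q_m Q_n = -Q_n Q_m$ as operators on $\Gamma$. Applying both sides to $1 \in \Gamma$ and specializing the resulting symmetric function identity at $x = (t,-1)$ gives $f_{(m,n)} = -f_{(n,m)}$. Setting $m=n$ yields $2f_{(n,n)} = 0$, hence $f_{(n,n)}=0$ for $n > 0$.

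For (1), the case $n=0$ is the convention $f_{(m,0)} = f_m$. The central step is to derive the recurrence
\[
f_{(m,n)} + f_{(m+1,n-1)} = f_m f_n - f_{m+1}f_{n-1}
\]
directly from the defining sums. After re-indexing $a \mapsto a+1$ in the sum for $f_{(m+1,n-1)}$ and adding to $f_{(m,n)}$, the terms with $a \geq 2$ cancel pairwise, leaving only the four displayed products. I then induct on $n$. For the base case $n=1$, the recurrence (with $f_{(m+1,0)} = f_{m+1}$) gives $f_{(m,1)} = f_m f_1 - 2f_{m+1}$; substituting $f_k = 2(t-1)(k)_t$ and applying the closed form $(k)_t = (t^k-(-1)^k)/(t+1)$, the identity $(t-1)(m)_t - (m+1)_t = -t(m-1)_t$ yields $f_{(m,1)} = -2t f_{m-1}$.

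For the inductive step ($n \geq 2$), the inductive hypothesis gives $f_{(m+1,n-1)} = 2(-t)^{n-1} f_{m-n+2}$ (since $m+1 > n-1$ automatically), so the desired formula $f_{(m,n)} = 2(-t)^n f_{m-n}$ reduces to verifying
\[
f_m f_n - f_{m+1} f_{n-1} = 2(-t)^{n-1}\bigl(f_{m-n+2} - t f_{m-n}\bigr).
\]
The main obstacle is this last algebraic verification: it is elementary but requires careful sign bookkeeping. Substituting $f_k = 2(t-1)(k)_t$ and expanding via $(t+1)(k)_t = t^k - (-1)^k$, both sides simplify after cancellation to $\tfrac{4(-1)^{n-1}t^{n-1}(t-1)^2\bigl(t^{m-n+1} + (-1)^{m-n}\bigr)}{t+1}$, completing the induction.
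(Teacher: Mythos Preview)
Your proof is correct. Part (2) matches the paper's approach exactly: both use that $f_{(m,n)}=Q_{(m,n)}(t,-1)$ together with the antisymmetry $Q_mQ_n=-Q_nQ_m$ coming from the Clifford relation.

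Part (1), however, is handled quite differently. The paper evaluates $Q_{(m,n)}(t,-1)$ via the tableau realization of Schur $Q$-functions: for the shifted shape $(m,n)^*$ with two variables $t,-1$, the first $n$ boxes of row one are forced to carry $t$, all of row two is forced to carry $-1$, and only the last $m-n$ boxes of row one are free, which immediately yields $2(-t)^n f_{m-n}$ after counting border strips. Your argument instead extracts the telescoping recurrence $f_{(m,n)}+f_{(m+1,n-1)}=f_m f_n-f_{m+1}f_{n-1}$ directly from the defining sum and closes it by induction using the closed form $(k)_t=(t^k-(-1)^k)/(t+1)$. The tableau argument is shorter and more conceptual once one accepts the combinatorial description of $Q_{\lambda}$; your route is entirely self-contained and avoids importing that machinery, at the cost of a somewhat delicate algebraic verification in the inductive step. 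Both are clean, and your recurrence is actually a useful identity in its own right.
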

\begin{proof}
Note that $f_{(m, n)}$ is exactly the value of the Schur Q-function $Q_{(m,n)}(t,-1)$, which implies (2).
By the tableaux presentation of $Q_{(m, n)}$, $Q_{(m,n)}(t,-1)=\sum_{T}2^{b(T)}x^T$ summed over all tableaux satisfying ($n>0$)\\
(i) the first $n$ boxes in the first row are labelled $t$;\\
(ii) all boxes in the second row are labeled $-1$;\\
(iii) the remaining boxes are labeled $t$ or $-1$.\\
Then (1) follows from case by case verification.
\end{proof}

Let $\mu\vdash n-k$ be a strict partition, $\mu=(\mu_1,\mu_2,\cdots,\mu_{r})\subset \lambda=(\lambda_1,\lambda_2,\cdots,\lambda_s)$ and $r+s$ is even (where $\mu_r=0$ for $l(\mu)+l(\lambda)$ is odd). Then the coefficient $C(\lambda/\mu)$ of $Q_{\mu}.1$ in $g^*_{k}Q_{\lambda}.1$ can be expressed as follows:
\begin{align*}
C(\lambda/\mu)
=&2^{-l(\mu)}\left \langle Q_{\mu_{1}}Q_{\mu_2}\cdots Q_{\mu_{r}}.1, \sum_{\mbox{\tiny$\begin{array}{c}
i_1,i_2,\cdots,i_{s}\geq0\\i_1+i_2+\cdots+i_{s}=k\end{array}$}}\prod\limits_{j=1}^{s}f_{i_j}Q_{\lambda_1-i_1}\cdots Q_{\lambda_{s}-i_{s}}.1 \right \rangle
\end{align*}

Note that: $\langle Q_{\mu_{1}}\cdots Q_{\mu_{r}}.1, Q_{a_1}\cdots Q_{a_m}.1 \rangle=0$ unless $\sum a_i=|\mu|,$ where $a_i\in \mathbb{Z}.$

Then we have
\begin{align*}
C(\lambda/\mu)=2^{-l(\mu)}\left \langle Q_{\mu_{1}}Q_{\mu_2}\cdots Q_{\mu_{r}}.1, \prod\limits_{j=1}^{s}(\sum\limits_{i_j\geq0}f_{i_j}Q_{\lambda_j-i_j}).1 \right \rangle
\end{align*}

Since $Q_n$ are Fermi operators, it follows from  \eqref{e:wick} that
\begin{align*}
C(\lambda/\mu)=2^{-l(\mu)}(-1)^{n-k}\text{Pf}(M(\lambda/\mu))_{i,j=1,2,\ldots,r+s}
\end{align*}
where $M(\lambda/\mu)$ is a $(r+s)\times(r+s)$ antisymmetric matrix
\begin{gather*}
\begin{pmatrix}M_{r\times r} & M_{r\times s} \\
 -M^{\top}_{s\times r} & M_{s\times s}
\end{pmatrix}_{(r+s)\times (r+s)}
\end{gather*}
where $M_{r\times r}$ and $M_{s\times s}$ are antisymmetric matrices of respective sizes $r$ and $s$ with
\begin{align*}
(M_{r\times r})_{i,j}&=(-1)^{\mu_{r-i+1}}\langle Q_{\mu_{r-i+1}}.1, Q_{-\mu_{r-j+1}}.1\rangle=0, \quad 1\leq i<j\leq r,\\
(M_{r\times s})_{i,j}&=(-1)^{\mu_{r-i+1}}\langle Q_{\mu_{r-i+1}}.1, \sum\limits_{i_j\geq0}f_{i_j}Q_{\lambda_{j}-i_j}.1\rangle\\
&=\begin{cases}
f_{\lambda_{j}}& \text{if $i=1, \mu_{r}=0$}\\
2(-1)^{\mu_{r-i+1}}f_{\lambda_{j}-\mu_{r-i+1}}& \text{if others}
\end{cases}
\quad 1\leq i\leq r, 1\leq j\leq s,\\
(M_{s\times s})_{i,j}&=\left \langle \sum\limits_{i_i\geq0}(-1)^{\lambda_i-i_i}f_{i_i}Q_{i_i-\lambda_i}.1, \sum\limits_{i_j\geq0}f_{i_j}Q_{\lambda_j-i_j}.1\right \rangle\\
&=f_{\lambda_i}f_{\lambda_j}+2\sum\limits_{i_j\geq0}^{\lambda_j-1}(-1)^{\lambda_j-i_j}f_{\lambda_i+\lambda_j-i_j}f_{i_j}\\
&=f_{(\lambda_i,\lambda_j)}, \quad 1\leq i<j\leq s.
\end{align*}

Summarizing the above, we have shown the following result.
\begin{thm}\label{t:Pf}
Let $k$ and $n$ be positive integers $(k\leq n)$, $\lambda\in\mathcal{SP}_n,$ then we have
\begin{align}\label{e:Pf}
g^*_{k}Q_{\lambda}.1=\sum_{\mbox{\tiny$\begin{array}{c}
\mu\subset\lambda\\ \mu\in\mathcal{SP}_{n-k}\end{array}$}}2^{-l(\mu)}(-1)^{n-k}{\rm Pf}(M(\lambda/\mu))Q_{\mu}.1
\end{align}
where $M(\lambda/\mu)$ is defined above.
\end{thm}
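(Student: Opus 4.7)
My plan is to derive the Pfaffian formula as a direct application of the Wick formula once the expression $g_k^*Q_\lambda.1$ is rewritten as a vacuum expectation value of a product of Fermi operators. The starting point is Theorem \ref{t:iterative}, which says
\begin{align*}
g_{k}^{*}Q_{\lambda}.1 = \sum_{i_1+\cdots+i_s=k,\, i_j\geq 0}\Bigl(\prod_{j=1}^s f_{i_j}\Bigr)Q_{\lambda_1-i_1}\cdots Q_{\lambda_s-i_s}.1,
\end{align*}
where $f_i = q_i(t,-1)$. To extract the coefficient $C(\lambda/\mu)$ of $Q_\mu.1$, I pair against $Q_{\mu_1}\cdots Q_{\mu_r}.1$ and divide by $2^{l(\mu)}$, where I pad with $\mu_r=0$ whenever $l(\lambda)+l(\mu)$ is odd so that the total number of operators is even, as required for a nonzero Wick contraction.

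Next I interchange the sum over $(i_1,\ldots,i_s)$ with the pairing. Since the inner product vanishes unless the total degree matches, I can freely replace each restricted finite sum by the full generating series, producing the single factor $\sum_{i_j\geq 0}f_{i_j}Q_{\lambda_j-i_j}$ for each $j$, which is a linear combination of the Fermi operators $Q_n$. This puts $C(\lambda/\mu)$ into exactly the form required to invoke the Wick formula \eqref{e:wick}, giving
\begin{align*}
C(\lambda/\mu) = 2^{-l(\mu)}(-1)^{n-k}\,\mathrm{Pf}(M(\lambda/\mu)),
\end{align*}
with the overall sign $(-1)^{n-k}$ arising from converting $Q_{\mu_i}$ in the bra to $Q^*_{-\mu_i}=(-1)^{\mu_i}Q_{\mu_i}$ (the sum $|\mu|=n-k$ controls the total sign).

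The remaining work is to identify the entries of $M(\lambda/\mu)$. For the top-left $r\times r$ block, the pairing $\langle Q_{\mu_{r-i+1}}.1,\,Q_{-\mu_{r-j+1}}.1\rangle$ vanishes by Proposition \ref{t:relation} whenever both indices are nonnegative, so this block is zero (the $\mu_r=0$ padding entries are handled similarly). The off-diagonal $r\times s$ block produces entries of the form $2(-1)^{\mu_{r-i+1}}f_{\lambda_j-\mu_{r-i+1}}$ from $\langle Q_{\mu_{r-i+1}}.1,\,Q_{\lambda_j-i_j}.1\rangle = 2\delta_{i_j,\lambda_j-\mu_{r-i+1}}$, with a special case when the padded zero row appears. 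Finally, the $s\times s$ block gives $\langle \sum_{i}(-1)^{\lambda_i-i}f_i Q_{i-\lambda_i}.1, \sum_{j}f_j Q_{\lambda_j-j}.1\rangle$, which collapses to $f_{(\lambda_i,\lambda_j)}$ by the definition of $f_{(m,n)}$.

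The main technical point to watch is the bookkeeping of signs and of the padding convention: the sign $(-1)^{n-k}$, the factors $(-1)^{\mu_{r-i+1}}$ in the mixed block, and the parity-dependent introduction of $\mu_r=0$ must all be consistent so that the Pfaffian is well-defined and nonzero. Once these sign conventions are pinned down, summing $C(\lambda/\mu)Q_\mu.1$ over strict $\mu\subset\lambda$ with $|\mu|=n-k$ yields the claimed formula \eqref{e:Pf}.
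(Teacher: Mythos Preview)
Your proposal is correct and follows essentially the same route as the paper: start from Theorem~\ref{t:iterative}, extract $C(\lambda/\mu)$ by pairing with $Q_{\mu_1}\cdots Q_{\mu_r}.1$ (padding $\mu_r=0$ to make $r+s$ even), use the degree constraint to replace the constrained sum over $(i_1,\ldots,i_s)$ by independent full sums, apply the Wick formula~\eqref{e:wick}, and then compute the three blocks of $M(\lambda/\mu)$ exactly as you outline. The sign bookkeeping you describe (the global $(-1)^{|\mu|}=(-1)^{n-k}$ from $Q^*_{-\mu_i}=(-1)^{\mu_i}Q_{\mu_i}$, and the factors $(-1)^{\mu_{r-i+1}}$ in the mixed block) matches the paper's computation.
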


We introduce the modified matrix $\tilde{M}(\lambda/\mu)$ of $M(\lambda/\mu)$ as an antisymmetric matrix of size $(r+s)$ with entries
\begin{gather*}
\begin{pmatrix}\tilde{M}_{r\times r} & \tilde{M}_{r\times s} \\
 -\tilde{M}^{\top}_{s\times r} & \tilde{M}_{s\times s}
\end{pmatrix}_{(r+s)\times (r+s)}
\end{gather*}
where $\tilde{M}_{r\times r}$ and $\tilde{M}_{s\times s}$ are antisymmetric matrices given by
\begin{align*}
(\tilde{M}_{r\times r})_{i,j}&=0, \quad 1\leq i<j\leq r,\\
(\tilde{M}_{r\times s})_{i,j}&=f_{\lambda_{j}-\mu_{r-i+1}}, \quad 1\leq i\leq r, 1\leq j\leq s,\\
(\tilde{M}_{s\times s})_{i,j}&=f_{(\lambda_i,\lambda_j)}, \quad 1\leq i<j\leq s.
\end{align*}

By \eqref{e:decomposition1}, we have
\begin{align}\label{e:Pftype}
g^*_{k}Q_{\lambda}.1=\sum_{\mbox{\tiny$\begin{array}{c}
\mu\subset\lambda\\ \mu\in\mathcal{SP}_{n-k}\end{array}$}}{\rm Pf}(\tilde{M}(\lambda/\mu))Q_{\mu}.1
\end{align}

\subsection{Murnaghan-Nakayama type rule for $\zeta^{\la}_{\mu}(q)$.}
We introduce two operations on matrices. Let $B=(b_{ij})_{m\times m}$ be a matrix, we define \\
$(1)$ The column rotation $C_{p}^{q}$ of matrix $B$: columns $p, p+1, \ldots, q-1, q$ are cyclically rotated to columns $q, p, p+1, \ldots, q-1$.\\
$(2)$ The row rotation $R_{p}^q$ of matrix $B$: rows $p, p+1, \ldots, q-1, q$ are cyclically rotated to rows $q, p, p+1, \ldots, q-1$.\\
For instance, suppose $B=(b_{ij})_{4\times 4}$, then
\begin{align*}
C_{1}^3(B)=\left(\begin{array}{cccc}
b_{12}&b_{13}&b_{11}&b_{14}\\
b_{22}&b_{23}&b_{21}&b_{24}\\
b_{32}&b_{33}&b_{31}&b_{34}\\
b_{42}&b_{43}&b_{41}&b_{44}\\
\end{array}\right),~~~
R_{1}^3(B)=\left(\begin{array}{cccc}
b_{21}&b_{22}&b_{23}&b_{24}\\
b_{31}&b_{32}&b_{33}&b_{34}\\
b_{11}&b_{12}&b_{13}&b_{14}\\
b_{41}&b_{42}&b_{43}&b_{44}\\
\end{array}\right).
\end{align*}
Clearly the two operators commute: $C^{i}_{j}R_{p}^{q}=R^{q}_{p}C^{i}_{j}$. If $B$ is an antisymmetric matrix, then $R^{q}_{p}C^{q}_{p}(B)$ is also an antisymmetric matrix and ${\rm Pf}(B)=(-1)^{q-p}{\rm Pf}(R^{q}_{p}C^{q}_{p}(B))$.

Given strict partitions $\la=(\la_1,\la_2,\cdots,\la_s)$, $\mu=(\mu_1,\mu_2,\cdots,\mu_r)$, $s+r$ even. Define the antisymmetric matrix $\hat{M}(\la/\mu)$ as follows:
\begin{align*}
\hat{M}(\la/\mu)=
\left(\begin{array}{cccc}
\hat{M}_{s\times s}& \hat{M}_{s\times r}\\
-\hat{M}^{\top}_{r\times s}& \hat{M}_{r\times r}\\
\end{array}\right)
\end{align*}
where $\hat{M}_{r\times r}$ and $\hat{M}_{s\times s}$ are antisymmetric matrices given by
\begin{align*}
(\hat{M}_{r\times r})_{i,j}&=0, \quad 1\leq i<j\leq r,\\
(\hat{M}_{s\times r})_{i,j}&=f_{\lambda_{i}-\mu_{r-j+1}}, \quad 1\leq i\leq s, 1\leq j\leq r,\\
(\hat{M}_{s\times s})_{i,j}&=f_{(\lambda_i,\lambda_j)}, \quad 1\leq i<j\leq s.
\end{align*}
Using the operators defined above, we can obtain $\hat{M}(\la/\mu)$ from $\tilde{M}(\la/\mu)$. Indeed, we have the following transformations:
\begin{gather*}
  \centering
\begin{tikzpicture}
\node[] (1) at(0,0) {
$
\left(\begin{array}{cccc}
\tilde{M}_{r\times r}& \tilde{M}_{r\times s}\\
-\tilde{M}^{\top}_{s\times r}& \tilde{M}_{s\times s}\\
\end{array}\right)$};
\node[] (2) at(7,0) {$
\left(\begin{array}{cccc}
\tilde{M}_{r\times r}& -\tilde{M}_{r\times s}\\
\tilde{M}^{\top}_{s\times r}& \tilde{M}_{s\times s}\\
\end{array}\right)$};
\node[] (3) at(13.3,0) {$
\left(\begin{array}{cccc}
\hat{M}_{s\times s}& \hat{M}_{s\times r}\\
-\hat{M}^{\top}_{r\times s}& \hat{M}_{r\times r}\\
\end{array}\right).$};
\draw[->] (1)--(2);
\draw[->] (2)--(3);
\node at(3.5,0.2) {\tiny the first $r$ rows times $-1$};
\node at(3.5,-0.2) {\tiny the first $r$ columns times $-1$};
\node at(10.15,0.2) {\tiny $\prod_{i=0}^{r-1}C^{r+s-i}_{r-i}R^{r+s-i}_{r-i}$};
\end{tikzpicture}
\end{gather*}

Therefore, we have
\begin{align}\label{e:tildehat}
{\rm Pf}(\tilde{M}(\la/\mu))=(-1)^{r+rs}{\rm Pf}(\hat{M}(\la/\mu))={\rm Pf}(\hat{M}(\la/\mu))
\end{align}
where the second equation holds for $r+s$ is even.

The skew Schur $Q$-function has a Pfaffian formula given by J\'ozefiak and Pragacz \cite{JP} in terms of Schur $Q$-functions of one row or two rows, i.e.,
\begin{align}
Q_{\la/\mu}(X)={\rm Pf}
\left(\begin{array}{ccccccc}
0& Q_{(\la_1,\la_2)}& \cdots& Q_{(\la_1,\la_s)}& q_{\la_1-\mu_r}& \cdots& q_{\la_1-\mu_1}\\
Q_{(\la_2,\la_1)}& 0& \cdots& Q_{(\la_2,\la_s)}& q_{\la_2-\mu_r}& \cdots& q_{\la_2-\mu_1}\\
\vdots& \vdots& \ddots& \vdots& \vdots& \ddots& \vdots\\
Q_{(\la_s,\la_1)}& Q_{(\la_s,\la_2)}& \cdots& 0& q_{\la_s-\mu_r}& \cdots& q_{\la_s-\mu_1}\\
-q_{\la_1-\mu_r}& -q_{\la_2-\mu_r}& \cdots& -q_{\la_s-\mu_r}& 0& \cdots& 0\\
\vdots& \vdots& \ddots& \vdots& \vdots& \ddots& \vdots\\
-q_{\la_1-\mu_1}& -q_{\la_2-\mu_1}& \cdots& -q_{\la_s-\mu_1}& 0& \cdots& 0
\end{array}\right).
\end{align}
This is an analog to the similar result of Lascoux and Pragacz for ordinary skew Schur functions \cite{LP}. Since $f_i=q_{i}(t,-1)$ and $f_{(n,m)}=Q_{(n,m)}(t,-1)$, we have
\begin{align}\label{e:QPf}
Q_{\la/\mu}(t,-1)={\rm Pf}(\hat{M}(\la/\mu)).
\end{align}

The following result is a consequence of \eqref{e:tildehat} and \eqref{e:Pftype}.

\begin{thm} Let $k\leq n$ be two positive integers. Then for any $\lambda\in\mathcal{SP}_n,$ we have
\begin{align}
g^*_{k}Q_{\la}.1=\sum_{\mbox{\tiny$\begin{array}{c}
\mu\subset\lambda\\ \mu\in\mathcal{SP}_{n-k}\end{array}$}}Q_{\la/\mu}(t,-1)Q_{\mu}.1.
\end{align}
\end{thm}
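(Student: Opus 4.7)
The plan is to obtain the theorem by chaining the three identities established just above: \eqref{e:Pftype}, \eqref{e:tildehat}, and \eqref{e:QPf}. I would first invoke \eqref{e:Pftype} to write $g^*_{k}Q_{\lambda}.1=\sum_{\mu}{\rm Pf}(\tilde M(\lambda/\mu))\,Q_{\mu}.1$, then use \eqref{e:tildehat} to replace ${\rm Pf}(\tilde M(\lambda/\mu))$ by ${\rm Pf}(\hat M(\lambda/\mu))$, and finally apply \eqref{e:QPf} to identify the latter with $Q_{\lambda/\mu}(t,-1)$. Substituting through gives the claimed formula verbatim; the substantive work has already been carried out in establishing those three displays, so at this stage the proof is essentially a one-line assembly.

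The subtlety I would spell out is the sign bookkeeping behind \eqref{e:tildehat}. Multiplying the first $r$ rows and then the first $r$ columns of $\tilde M(\lambda/\mu)$ by $-1$ preserves the Pfaffian, since the two scalings by $(-1)^r$ cancel. The subsequent paired cyclic rotations $\prod_{i=0}^{r-1}C^{r+s-i}_{r-i}R^{r+s-i}_{r-i}$ reorder the block structure into that of $\hat M$, each paired rotation contributing a factor $(-1)^s$ via the identity ${\rm Pf}(B)=(-1)^{q-p}{\rm Pf}(R^{q}_{p}C^{q}_{p}(B))$, compounding to $(-1)^{rs}$ in total. Because $\mu$ is padded so that $r+s$ is even, the aggregate sign $(-1)^{r+rs}=(-1)^{r(1+s)}$ collapses to $+1$, which justifies \eqref{e:tildehat}.

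The remaining input \eqref{e:QPf} is the J\'ozefiak--Pragacz Pfaffian formula for $Q_{\lambda/\mu}(X)$ specialized at $(x_1,x_2,\ldots)=(t,-1,0,0,\ldots)$, where the matrix entries $q_{\lambda_i-\mu_j}$ and $Q_{(\lambda_i,\lambda_j)}$ reduce to $f_{\lambda_i-\mu_j}$ and $f_{(\lambda_i,\lambda_j)}$ respectively, via $f_i=q_i(t,-1)$ from Lemma \ref{l:z} and $f_{(n,m)}=Q_{(n,m)}(t,-1)$ from the lemma immediately preceding. I do not foresee any genuine obstacle: the theorem merely packages the three preceding identities, and the conceptual content to highlight is that the Pfaffian-type expansion of $g^*_k Q_\lambda.1$ is governed by a two-variable specialization of the skew Schur $Q$-function.
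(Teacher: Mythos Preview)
Your proposal is correct and follows exactly the paper's approach: the theorem is stated immediately after \eqref{e:Pftype}, \eqref{e:tildehat}, and \eqref{e:QPf} as their direct consequence, and your chaining of these three identities is precisely what the paper intends. One small slip: multiplying the first $r$ rows \emph{and} columns by $-1$ scales the Pfaffian by $(-1)^r$, not by $1$ (it is the determinant that picks up $(-1)^{2r}=1$); your later aggregate $(-1)^{r+rs}$ is nonetheless the correct total, consistent with the paper, so the conclusion stands.
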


Given two strict partitions $\mu\subset\la$. We call $\la/\mu$ is a {\it generalized strip} if the shifted skew diagram $\la^*/\mu^*$ has no $2\times 2$ block of squares. Particularly a connected generalized strip is exactly a {\it shifted border strip}. $\la/\mu$ is called a {\it double strip} if it is formed by the union of two shifted border strips which both end on the main diagonal. A double strip can be cut into two non-empty connected pieces, one piece consisting of the diagonals of length $2$, and other piece consisting of the shifted border strip formed by the diagonals of length $1$. The following is an example for $\la=(6,5,4,3,2)$ and $\mu=(5,4,2)$. The boxes occupied by two red lines form respectively two shifted border strips. The piece of length $2$ (resp. $1$) is composed of all boxes marked by $\ast$'s (resp. $\bullet$'s).
\begin{gather*}
  \centering
\begin{tikzpicture}[scale=0.6]
   \coordinate (Origin)   at (0,0);
    \coordinate (XAxisMin) at (0,0);
    \coordinate (XAxisMax) at (6,0);
    \coordinate (YAxisMin) at (0,5);
    \coordinate (YAxisMax) at (0,0);
    \draw [thin, black] (0,4) -- (0,5);
    \draw [thin, black] (1,3) -- (1,5);
    \draw [thin, black] (2,2) -- (2,5);
    \draw [thin, black] (3,1) -- (3,5);
    \draw [thin, black] (4,0) -- (4,5);
    \draw [thin, black] (5,0) -- (5,5);
    \draw [thin, black] (6,0) -- (6,5);
    \draw [thin, black] (4,0) -- (6,0);
    \draw [thin, black] (3,1) -- (6,1);
    \draw [thin, black] (2,2) -- (6,2);
    \draw [thin, black] (1,3) -- (6,3);
    \draw [thin, black] (0,4) -- (6,4);
    \draw [thin, black] (0,5) -- (6,5);
    \draw [thin, red] (3.5,1.5) -- (4.5,1.5);
    \draw [thin, red] (4.5,1.5) -- (4.5,2.5);
    \draw [thin, red] (4.5,2.5) -- (5.5,2.5);
    \draw [thin, red] (5.5,2.5) -- (5.5,4.5);
    \draw [thin, red] (4.5,0.5) -- (5.5,0.5);
    \draw [thin, red] (5.5,0.5) -- (5.5,1.5);
\end{tikzpicture}\qquad\qquad\qquad\qquad
\begin{tikzpicture}[scale=0.6]
   \coordinate (Origin)   at (0,0);
    \coordinate (XAxisMin) at (0,0);
    \coordinate (XAxisMax) at (6,0);
    \coordinate (YAxisMin) at (0,5);
    \coordinate (YAxisMax) at (0,0);
    \draw [thin, black] (0,4) -- (0,5);
    \draw [thin, black] (1,3) -- (1,5);
    \draw [thin, black] (2,2) -- (2,5);
    \draw [thin, black] (3,1) -- (3,5);
    \draw [thin, black] (4,0) -- (4,5);
    \draw [thin, black] (5,0) -- (5,5);
    \draw [thin, black] (6,0) -- (6,5);
    \draw [thin, black] (4,0) -- (6,0);
    \draw [thin, black] (3,1) -- (6,1);
    \draw [thin, black] (2,2) -- (6,2);
    \draw [thin, black] (1,3) -- (6,3);
    \draw [thin, black] (0,4) -- (6,4);
    \draw [thin, black] (0,5) -- (6,5);
    \node[inner sep=2pt] at (3.5,1.5) {$\ast$};
    \node[inner sep=2pt] at (4.5,1.5) {$\ast$};
    \node[inner sep=2pt] at (4.5,2.5) {$\ast$};
    \node[inner sep=2pt] at (5.5,1.5) {$\ast$};
    \node[inner sep=2pt] at (5.5,0.5) {$\ast$};
    \node[inner sep=2pt] at (4.5,0.5) {$\ast$};
    \node[inner sep=2pt] at (5.5,2.5) {$\bullet$};
    \node[inner sep=2pt] at (5.5,3.5) {$\bullet$};
    \node[inner sep=2pt] at (5.5,4.5) {$\bullet$};
\end{tikzpicture}
\end{gather*}
More generally, we call $\la/\mu$ a {\it generalized double strip} if there exists a strict partition $\mu\subset\nu\subset\la$ such that $\la^*/\nu^*$ and $\nu^*/\mu^*$ are both generalized strips. For a nonnegative integer $k$, a generalized double strip $\la/\mu$ is called a $k$-generalized double strip if $|\la/\mu|=k$.
\begin{exmp}\label{t:GDS}
$\la=(15,14,10,8,7,6,5,3,1)$, $\mu=(13,11,8,6,5,4,2,1)$ then $\la/\mu$ is a generalized double strip. Indeed we can choose $\nu=(15,13,10,8,6,5,4,2,1)$. The shifted diagram can be drawn as follows.
\begin{gather}
  \centering
\begin{tikzpicture}[scale=0.6]
   \coordinate (Origin)   at (0,0);
    \coordinate (XAxisMin) at (0,0);
    \coordinate (XAxisMax) at (15,0);
    \coordinate (YAxisMin) at (0,0);
    \coordinate (YAxisMax) at (0,9);
    \draw [thin, black] (0,8) -- (0,9);
    \draw [thin, black] (1,7) -- (1,9);
    \draw [thin, black] (2,6) -- (2,9);
    \draw [thin, black] (3,5) -- (3,9);
    \draw [thin, black] (4,4) -- (4,9);
    \draw [thin, black] (5,3) -- (5,9);
    \draw [thin, black] (6,2) -- (6,9);
    \draw [thin, black] (7,1) -- (7,9);
    \draw [thin, black] (8,0) -- (8,9);
    \draw [thin, black] (9,0) -- (9,9);
    \draw [thin, black] (10,1) -- (10,9);
    \draw [thin, black] (11,2) -- (11,9);
    \draw [thin, black] (12,6) -- (12,9);
    \draw [thin, black] (13,7) -- (13,9);
    \draw [thin, black] (14,7) -- (14,9);
    \draw [thin, black] (15,7) -- (15,9);
    \draw [thin, black] (8,0) -- (9,0);
    \draw [thin, black] (7,1) -- (10,1);
    \draw [thin, black] (6,2) -- (11,2);
    \draw [thin, black] (5,3) -- (11,3);
    \draw [thin, black] (4,4) -- (11,4);
    \draw [thin, black] (3,5) -- (11,5);
    \draw [thin, black] (2,6) -- (12,6);
    \draw [thin, black] (1,7) -- (15,7);
    \draw [thin, black] (0,8) -- (15,8);
    \draw [thin, black] (0,9) -- (15,9);
    \filldraw[fill = gray][ultra thick]
    (8,1) rectangle (9,0) (8,2)rectangle(9,1) (8,2)rectangle(9,3) (9,1)rectangle(10,2) (9,2)rectangle(10,3) (9,3)rectangle(10,4) (9,4)rectangle(10,5) (9,5)rectangle(10,6) (10,2)rectangle(11,3) (10,3)rectangle(11,4) (10,4)rectangle(11,5) (10,5)rectangle(11,6) (10,6)rectangle(11,7) (11,6)rectangle(12,7) (12,7)rectangle(13,8) (13,7)rectangle(14,8) (13,8)rectangle(14,9) (14,7)rectangle(15,8) (14,8)rectangle(15,9);
\end{tikzpicture}
\end{gather}
\end{exmp}
Similar to a double strip, a generalized double strip can be cut into two kinds of pieces (may be empty and not necessarily connected). One kind of pieces (denoted by $\alpha(\la/\mu)$) consist of the diagonals of length $2$, and other kind (denoted by $\beta(\la/\mu)$) consisting of the generalized strips formed by the diagonals of length $1$. In the first kind of pieces, we call the box on the top left (resp. bottom right) $t$-box (resp. $-1$-box). We label the $t$-boxes (resp. $-1$-boxes) with $t$ (resp. $-1$) in $\la^*/\mu^*$. Clearly, the number of $t$-boxes and $-1$-boxes in $\alpha(\la/\mu)$ are same. Denote this number by $c(\la/\mu)$. $\beta(\la/\mu)$ is a union of its connected components, each of which is a shifted border strip. Denote the number of connected components in $\beta(\la/\mu)$ by $m(\la/\mu)$. The following graph is for Example \ref{t:GDS} and $c(\la/\mu)=5$, $m(\la/\mu)=5$.
\begin{gather}
  \centering
\begin{tikzpicture}[scale=0.6]
   \coordinate (Origin)   at (0,0);
    \coordinate (XAxisMin) at (0,0);
    \coordinate (XAxisMax) at (15,0);
    \coordinate (YAxisMin) at (0,0);
    \coordinate (YAxisMax) at (0,9);
    \draw [thin, black] (0,8) -- (0,9);
    \draw [thin, black] (1,7) -- (1,9);
    \draw [thin, black] (2,6) -- (2,9);
    \draw [thin, black] (3,5) -- (3,9);
    \draw [thin, black] (4,4) -- (4,9);
    \draw [thin, black] (5,3) -- (5,9);
    \draw [thin, black] (6,2) -- (6,9);
    \draw [thin, black] (7,1) -- (7,9);
    \draw [thin, black] (8,0) -- (8,9);
    \draw [thin, black] (9,0) -- (9,9);
    \draw [thin, black] (10,1) -- (10,9);
    \draw [thin, black] (11,2) -- (11,9);
    \draw [thin, black] (12,6) -- (12,9);
    \draw [thin, black] (13,7) -- (13,9);
    \draw [thin, black] (14,7) -- (14,9);
    \draw [thin, black] (15,7) -- (15,9);
    \draw [thin, black] (8,0) -- (9,0);
    \draw [thin, black] (7,1) -- (10,1);
    \draw [thin, black] (6,2) -- (11,2);
    \draw [thin, black] (5,3) -- (11,3);
    \draw [thin, black] (4,4) -- (11,4);
    \draw [thin, black] (3,5) -- (11,5);
    \draw [thin, black] (2,6) -- (12,6);
    \draw [thin, black] (1,7) -- (15,7);
    \draw [thin, black] (0,8) -- (15,8);
    \draw [thin, black] (0,9) -- (15,9);
    \node[inner sep=2pt] at (8.5,2.5) {$t$};
    \node[inner sep=2pt] at (9.5,3.5) {$t$};
    \node[inner sep=2pt] at (9.5,4.5) {$t$};
    \node[inner sep=2pt] at (9.5,5.5) {$t$};
    \node[inner sep=2pt] at (13.5,8.5) {$t$};
    \node[inner sep=2pt] at (9.5,1.5) {$-1$};
    \node[inner sep=2pt] at (10.5,2.5) {$-1$};
    \node[inner sep=2pt] at (10.5,3.5) {$-1$};
    \node[inner sep=2pt] at (10.5,4.5) {$-1$};
    \node[inner sep=2pt] at (14.5,7.5) {$-1$};
    \filldraw[fill = gray][ultra thick]
    (8,1) rectangle (9,0) (8,2)rectangle(9,1)   (9,2)rectangle(10,3)  (10,5)rectangle(11,6) (10,6)rectangle(11,7) (11,6)rectangle(12,7) (12,7)rectangle(13,8) (13,7)rectangle(14,8)  (14,8)rectangle(15,9);
\end{tikzpicture}
\end{gather}

We remark that a generalized strip (or double strip) is naturally a generalized double strip. Let $\la/\mu$ be a generalized double strip, we have (1) if $c(\la/\mu)\geq1$, $m(\la/\mu)=1$, then $\la/\mu$ is a double strip; (2) if $c(\la/\mu)=0$, then $\la/\mu$ is a generalized strip; (3) if $c(\la/\mu)=0$, $m(\la/\mu)=1$, then $\la/\mu$ is a shifted border strip.

Let $\tau=(\tau_1,\tau_2,\cdots,\tau_a),\rho=(\rho_1,\rho_2,\cdots,\rho_b)$ be two compositions. We call $\rho$ is a {\it refinement} of $\tau$, denoted $\rho\prec\tau$, if there exists $i_0=0< i_1< i_2<\cdots< i_{a-1}< b=i_{a}$ such that $\tau_j=\rho_{i_{j-1}+1}+\rho_{i_{j-1}+2}+\cdots+\rho_{i_j}$, $j=1,2,\cdots,a$. For convenience, we also say that $\tau$ is a {\it coarsening} composition of $\rho$. 
In particular, $\tau\prec\tau$. Note that a given composition has only finitely many coarsening ones. 

Now we can give the Murnaghan-Nakayama rule. Let us begin by recalling the combinatorial definition of skew Schur $Q$-functions.
\begin{align}
Q_{\la/\mu}(X)=\sum_{T}2^{b(T)}x^{\alpha}
\end{align}
summed over all shifted tableaux of shape $\theta^*=\la^*/\mu^*$.

We remark that the diagonal square $(i,i)$ in $\theta^*$ must all lie in distinct shifted border strips, so that $b(T)\geq l(\la)-l(\mu)$ for each tableau $T$ of shape $\theta^*$. Thus $Q_{\la/\mu}(x_1,x_2,\ldots,x_n)=0$ unless $n\geq l(\la)-l(\mu)$. It is clear that if $\theta^*$ has $m$ connected components $\xi_1^*,\xi_2^*,\cdots,\xi_m^*$, then $Q_{\theta}(X)=\prod_{i=1}^{m}Q_{\xi_i}(X)$.

By the above definition, in the case where there is only one variable $x$, we have $Q_{\la/\mu}(x)=0$ unless $\la/\mu$ is a generalized strip. Note that
\begin{align}
Q_{\la/\mu}(x_1,x_2)=\sum_{\nu}Q_{\nu/\mu}(x_1)Q_{\la/\nu}(x_2)
\end{align}
where the sum runs over all strict partition $\nu$ such that $\mu\subset\nu\subset\la$. Therefore $Q_{\la/\mu}(x_1,x_2)=0$ unless $\la/\mu$ is a generalized double strip.

Let $\la/\mu$ be a generalized double strip, we can compute $Q_{\la/\mu}(t,-1)$ by shifted tableaux of shape $\la^*/\mu^*$ labelled $t$ or $-1$. Indeed we have
\begin{align}\label{e:Q(t,-1)}
Q_{\la/\mu}(t,-1)=\sum_{T}2^{b(T)}t^{\alpha_1}(-1)^{|\la/\mu|-\alpha_1}
\end{align}
where the sum runs over all shifted tableaux of shape $\la^*/\mu^*$ satisfying\\
(M1) the $t$-boxes (resp. $-1$-boxes) are labeled $t$ (resp. $-1$);\\
(M2) $-1$'s are behind (resp. below) $t$'s in each row (resp. column).\\
And $\alpha_1$ is the number of boxes in $T$ labeled $t$.

Given a generalized double strip $\la/\mu$, $\beta(\la/\mu)$ has $m$ connected components denoted by $\xi^{(1)},\cdots,\xi^{(m)}$, we define its weight by
\begin{align}
wt_t(\la/\mu)=(-t)^{c(\la/\mu)}2^{\delta_{l(\la),l(\mu)+2}}\prod_{i=1}^{m}\left(\sum_{\xi^{(i)}\prec\tau}(-1)^{l(\xi^{(i)})-l(\tau)}f_{\tau}\right)
\end{align}
where $f_{\tau}=f_{\tau_1}f_{\tau_2}\cdots$.

\begin{exmp}
In example \ref{t:GDS}, $\la=(15,14,10,8,7,6,5,3,1)$ and $\mu=(13,11,8,6,5,4,2,1)$, $l(\la)=l(\mu)+1$. So
\begin{align*}
wt_t(\la/\mu)=(-t)^5f_1f_2(f_1f_2-f_3)f_1(f_1f_1-f_2)=-2^5t^5(t-1)^7(t^2-3t+1).
\end{align*}
\end{exmp}

Now we want to compute $Q_{\la/\mu}(t,-1)$ and show $Q_{\la/\mu}(t,-1)=wt_t(\la/\mu)$ when $\la/\mu$ is a generalized double strip. Let us firstly look at the simplest case: $\la/\mu$ is a shifted border strip.
\begin{prop}\label{t:SBS}
Suppose $\la/\mu$ ($l(\la/\mu)=s$) is a shifted border strip and $\la/\mu$ has $r_i$ boxes in the $i$th row, $1\leq i\leq s$. Then we have
\begin{align}\label{e:SBS}
Q_{\la/\mu}(t,-1)=\sum_{\tau}(-1)^{s-l(\tau)}f_{\tau}
\end{align}
summed over all coarsening compositions $\tau$ of $(r_1,\cdots,r_s)$, i.e, $(r_1,\cdots,r_s)\prec\tau$.
\end{prop}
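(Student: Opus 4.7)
The plan is to induct on the number of rows $s$ of the shifted border strip. The base case $s = 1$ is immediate: the shape is a single row of $r_1$ boxes, so by Lemma \ref{l:z}, $Q_{\la/\mu}(t,-1) = q_{r_1}(t,-1) = f_{r_1}$, matching the sole coarsening $\tau = (r_1)$ on the right.

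For the inductive step, I would first verify that the right-hand side of \eqref{e:SBS} obeys the two-term recursion
\begin{equation*}
\sum_{\tau} (-1)^{s-l(\tau)} f_\tau = f_{r_1} \sum_{\tau'} (-1)^{(s-1)-l(\tau')} f_{\tau'} - \sum_{\tau''} (-1)^{(s-1)-l(\tau'')} f_{\tau''},
\end{equation*}
where $\tau, \tau', \tau''$ range over coarsenings of $(r_1,\ldots,r_s)$, $(r_2,\ldots,r_s)$, and $(r_1+r_2, r_3, \ldots, r_s)$ respectively. This follows by splitting coarsenings of $(r_1,\ldots,r_s)$ according to whether position $1$ is a cut (giving the first term, via $l(\tau) = 1 + l(\tau')$) or $r_1, r_2$ are merged into the first block (giving the second term with extra sign, via $l(\tau) = l(\tau'')$). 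Next, I would establish the analogous recursion on the LHS using the tableau description \eqref{e:Q(t,-1)}: since $\la/\mu$ is a shifted border strip, consecutive rows $i, i+1$ overlap at a single column (the leftmost column of the upper row coincides with the rightmost column of the lower row), and each valid filling is encoded by row types $a_i \in \{T, E, M\}$ (pure-$t$, pure-$(-1)$, or mixed with $r_i-1$ interior cut positions), subject to the constraint that no consecutive pair $(a_i, a_{i+1}) = (E, T)$ occurs. A connectivity analysis across the overlap cells yields $b(T) = 2 n_M + \delta_{a_1, T} + \delta_{a_s, E}$, where $n_M$ is the number of mixed rows.

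The hard part will be matching the two decompositions numerically. Conditioning on the first-row type $a_1$ and the transition $(a_1, a_2)$, the LHS splits into an ``independent first row'' contribution (a factor $f_{r_1}$ times the LHS for the reduced border strip with row sizes $(r_2, \ldots, r_s)$) plus a ``merged rows'' correction matching minus the LHS for row sizes $(r_1+r_2, r_3, \ldots, r_s)$. This matching rests on the elementary identity
\begin{equation*}
M_{r_1+r_2} = E_{r_1} T_{r_2} + E_{r_1} M_{r_2} + M_{r_1} T_{r_2},
\end{equation*}
where $T_n = t^n$, $E_n = (-1)^n$, and $M_n = \sum_{k=1}^{n-1} t^k (-1)^{n-k}$; this expresses that a mixed super-row of length $r_1+r_2$ decomposes uniquely along its internal $-1$-to-$t$ transition. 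Combined with the trivial $T_{r_1} T_{r_2} = T_{r_1+r_2}$ and $E_{r_1} E_{r_2} = E_{r_1+r_2}$, this identity shows that the forbidden $(E, T)$ boundary contributions at the merge point cancel under the subtraction. The most delicate point is tracking how the boundary factors $\delta_{a_1, T}$ and $\delta_{a_s, E}$ in $b(T)$ transform under the merge operation; they are absorbed correctly because the merged subproblem's first-row boundary factor at the size-$(r_1+r_2)$ super-row captures the original $(a_1, a_2) = (T, T)$ case exactly.
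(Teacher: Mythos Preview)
Your approach is viable but takes a genuinely different route from the paper. The paper's argument is much shorter: it first reduces (without loss of generality) to $\lambda=(\lambda_1,\ldots,\lambda_s)$, $\mu=(\lambda_2,\ldots,\lambda_s,0)$, then invokes the J\'ozefiak--Pragacz Pfaffian formula \eqref{e:QPf}, already established earlier in the section. Because the block $\hat{M}_{r\times r}$ vanishes and $\mu_{r-j+1}=\lambda_{j+1}$, the Pfaffian collapses to the $s\times s$ upper-Hessenberg determinant
\[
\det\begin{pmatrix}
f_{\lambda_1-\lambda_2} & f_{\lambda_1-\lambda_3} & \cdots & f_{\lambda_1}\\
1 & f_{\lambda_2-\lambda_3} & \cdots & f_{\lambda_2}\\
\vdots & \ddots & \ddots & \vdots\\
0 & \cdots & 1 & f_{\lambda_s}
\end{pmatrix},
\]
and expanding along the first column yields precisely the two-term recursion you wrote down for the coarsening sum. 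So both proofs share that recursion on the right-hand side; the difference is that the paper obtains the matching recursion on $Q_{\lambda/\mu}(t,-1)$ \emph{for free} from the determinant structure, whereas you rebuild it combinatorially from the tableau expansion \eqref{e:Q(t,-1)}.

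Your route has the merit of bypassing the Pfaffian machinery entirely, but it trades that for a substantial row-type case analysis. If you keep this approach, make the $2^{b(T)}$ bookkeeping fully explicit: writing $b_1$ for the single-row exponent and $b_{2,\ldots,s}$ for the residual one, the discrepancy $b_1+b_{2,\ldots,s}-b(T)$ equals $\delta_{a_1,E}+\delta_{a_2,T}$, and it is this extra power of $2$ (together with the forbidden pair $(E,T)$) that your identity $M_{r_1+r_2}=E_{r_1}T_{r_2}+E_{r_1}M_{r_2}+M_{r_1}T_{r_2}$ must absorb into the merged-row correction. That step is correct but deserves a full line-by-line verification rather than the current sketch.
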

\begin{proof}
By \eqref{e:Q(t,-1)}, without loss of generality, we can assume that $\la=(\la_1,\la_2,\cdots,\la_s)$ and $\mu=(\la_2,\la_3,\cdots,\la_{s},0)$. Then $\la_i-\la_{i+1}=r_i$ ($i=1,2,\cdots,s-1$) and $\la_s=r_s$. By \eqref{e:QPf}, we have
\begin{align*}
Q_{\la/\mu}(t,-1)={\rm Pf}(\hat{M}(\la/\mu))&=(-1)^{\frac{s(s-1)}{2}}\det
\left(\begin{array}{cccc}
f_{\la_1}& f_{\la_1-\la_s}& \cdots& f_{\la_1-\la_2}\\
f_{\la_2}& f_{\la_2-\la_s}& \cdots& 1\\
\vdots& \vdots& \begin{rotate}{90}$\ddots$\end{rotate}& \vdots\\
f_{\la_s}& 1& 0& 0
\end{array}\right)\\
&=\det
\left(\begin{array}{cccc}
f_{\la_1-\la_2}& f_{\la_1-\la_3}& \cdots& f_{\la_1}\\
1& f_{\la_2-\la_3}& \cdots& f_{\la_2}\\
\vdots& \ddots& \ddots& \vdots\\
0& \cdots& 1&  f_{\la_s}
\end{array}\right)\\
&=\sum_{(r_1,\cdots,r_s)\prec\tau}(-1)^{s-l(\tau)}f_{\tau}.
\end{align*}
The last equation holds by induction on $s$.
\end{proof}

\begin{rem}\label{r:sbs}
When $\la/\mu$ is a shifted border strip, by Proposition \ref{t:SBS} we have $(t-1)\mid Q_{\la/\mu}(t,-1)$ and
\begin{align*}
\frac{Q_{\la/\mu}(t,-1)}{t-1}\mid_{t=1}=(-1)^{s-1}\frac{f_{r_1+\cdots+r_s}}{t-1}\mid_{t=1}&=(-1)^{s-1}2\frac{1-(-1)^{|\la/\mu|}}{2}\\
&=\begin{cases}
2(-1)^{ht(\la/\mu)}& \text{if $|\la/\mu|$ is odd};\\
0& \text{if $|\la/\mu|$ is even}.
\end{cases}
\end{align*}
\end{rem}

We now compute $Q_{\la/\mu}(t,-1)$ when $\la/\mu$ is a generalized double strip.

\begin{thm}\label{t:casetwo}
Let $\la/\mu$ be a generalized double strip. Suppose $\beta(\la/\mu)$ has two connected components (shifted border strips) denoted by $\xi_1,\xi_2$. Then we have
\begin{align}
Q_{\la/\mu}(t,-1)=(-t)^{c(\la/\mu)}2^{\delta_{l(\la),l(\mu)+2}}Q_{\xi_1}(t,-1)Q_{\xi_2}(t,-1)
\end{align}
where $\delta$ is the Kronecker delta function.
\end{thm}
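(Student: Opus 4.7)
My plan is to apply the tableau formula \eqref{e:Q(t,-1)} directly to $Q_{\la/\mu}(t,-1)$ and exploit the hypothesized decomposition $\beta(\la/\mu)=\xi_1\sqcup\xi_2$ into two shifted border strip components.

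By condition (M1) of \eqref{e:Q(t,-1)}, the labelling on every square of $\alpha(\la/\mu)$ is completely forced, with the $c(\la/\mu)$ $t$-boxes receiving $t$ and the $c(\la/\mu)$ $-1$-boxes receiving $-1$. This contributes the overall factor $t^{c(\la/\mu)}(-1)^{c(\la/\mu)}=(-t)^{c(\la/\mu)}$ to every summand. The remaining freedom consists of labelings of the squares in $\xi_1$ and $\xi_2$ with $\{t,-1\}$ subject to (M2). Because distinct connected components of $\beta$ in a generalized double strip occupy disjoint sets of rows and columns (they are separated within $\la^*/\mu^*$ by $\alpha$-pieces whose forced labels do not propagate a nontrivial constraint across the separating region), the two labelings may be chosen independently; a further application of \eqref{e:Q(t,-1)} to each $\xi_i$ separately, combined with Proposition~\ref{t:SBS}, shows that the sum over labelings of $\xi_i$ alone reproduces $Q_{\xi_i}(t,-1)$.

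The decisive step is comparing the global border-strip count $b(T)$ appearing in \eqref{e:Q(t,-1)} with the sum $b(T|_{\xi_1})+b(T|_{\xi_2})$. Both the $t$-labelled and the $-1$-labelled regions of $T$ decompose as a fixed $\alpha$-part (determined by (M1)) together with the $t$- or $-1$-subsets of $\xi_1$ and $\xi_2$. By the geometry of the generalized double strip, the only possible interactions between the $\alpha$-part and the $\xi_i$-pieces occur along the main diagonal. A case analysis according to $l(\la)-l(\mu)\in\{0,1,2\}$ then shows that the discrepancy $b(T)-b(T|_{\xi_1})-b(T|_{\xi_2})$ is a combinatorial constant that depends only on $\la$ and $\mu$, except that in the extremal case $l(\la)=l(\mu)+2$ a single additional border-strip component is produced at the diagonal, producing precisely the extra factor $2^{\delta_{l(\la),l(\mu)+2}}$.

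I expect the main obstacle to be this last case distinction, because one has to track how the forced labels on $\alpha$ glue to, or remain separated from, the $\xi_i$-components near the main diagonal, and these gluings behave qualitatively differently depending on the value of $l(\la)-l(\mu)$. Once the three values are verified, combining the forced factor $(-t)^{c(\la/\mu)}$, the independent contributions $Q_{\xi_1}(t,-1)Q_{\xi_2}(t,-1)$, and the correction $2^{\delta_{l(\la),l(\mu)+2}}$ yields the claimed identity.
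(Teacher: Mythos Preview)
Your outline follows the same strategy as the paper: evaluate \eqref{e:Q(t,-1)} at $(t,-1)$, note that (M1) forces the labels on $\alpha(\la/\mu)$ and extracts $(-t)^{c(\la/\mu)}$, then argue that the remaining contribution factors as $Q_{\xi_1}(t,-1)Q_{\xi_2}(t,-1)$ up to a power of $2$ controlled by $l(\la)-l(\mu)$. So the architecture is right.

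There is, however, a real imprecision in where you locate the ``gluing'' work. You write that the interactions between $\alpha$ and the $\xi_i$ occur along the main diagonal, and that a case split on $l(\la)-l(\mu)$ alone shows the discrepancy $b(T)-b(T|_{\xi_1})-b(T|_{\xi_2})$ is independent of $T$. That is not where the issue lives. The interfaces between $\alpha$ and $\xi_1$, $\xi_2$ sit at the two extreme length-$2$ diagonals of $\alpha$, which are generically far from the main diagonal; the main diagonal enters only in the $l(\la)=l(\mu)+2$ case because that forces a second $\alpha$-component touching it. At each interface the adjacent box of $\xi_i$ carries a \emph{free} label $a\in\{t,-1\}$ (and similarly $b$, and a third label $c$ in the $l(\la)=l(\mu)+2$ case), and whether the $t$-strip of $\alpha$ fuses with a $t$-strip of $\xi_i$ or not depends on that label. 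The point requiring verification is that although the individual gluings change with $(a,b)$ (or $(a,b,c)$), the total border-strip count $b(T)$ equals $b(\xi_1)+b(\xi_2)$ (respectively $b(\xi_1)+b(\xi_2)+1$) in every sub-case. The paper carries out exactly this check: in Case~1 it runs through all four possibilities for $(a,b)$ and shows each yields $b(T)=b(\xi_1)+b(\xi_2)$, and in Case~2 it verifies the analogous statement with the extra $+1$. Your sketch asserts the conclusion but skips this sub-case verification, which is the actual content of the argument.
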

\begin{proof}
Since $\la/\mu$ is a generalized double strip, $l(\mu)\leq l(\la)\leq l(\mu)+2$. We divide it into two cases.\\
{\bf Case 1:} $l(\mu)\leq l(\la)\leq l(\mu)+1$. In this case, $\alpha(\la/\mu)$ is connected. The skew shifted diagram of $\la/\mu$ can be drawn as follows. Here $a,b\in\{t,-1\}$.
\begin{gather*}
  \centering
\begin{tikzpicture}[scale=0.6]
   \coordinate (Origin)   at (0,0);
    \coordinate (XAxisMin) at (0,0);
    \coordinate (XAxisMax) at (15,0);
    \coordinate (YAxisMin) at (0,0);
    \coordinate (YAxisMax) at (0,9);
    \draw [thin, black] (0,0) -- (0,2);
    \draw [thin, black] (1,0) -- (1,2);
    \draw [thin, black] (2,0) -- (2,1);
    \draw [thin, black] (0,0) -- (2,0);
    \draw [thin, black] (0,1) -- (2,1);
    \draw [thin, black] (0,2) -- (1,2);
    \node[inner sep=2pt] at (0.5,1.5) {$t$};
    \node[inner sep=2pt] at (1.5,0.5) {$-1$};
    \node[inner sep=2pt] at (3,1.5) {$\begin{rotate}{90}$\ddots$\end{rotate}$};
    \node[inner sep=2pt] at (2,2.5) {$\begin{rotate}{90}$\ddots$\end{rotate}$};
    \draw [thin, black] (3,4) -- (3,5);
    \draw [thin, black] (4,3) -- (4,5);
    \draw [thin, black] (5,3) -- (5,5);
    \draw [thin, black] (4,3) -- (5,3);
    \draw [thin, black] (3,4) -- (5,4);
    \draw [thin, black] (3,5) -- (5,5);
    \node[inner sep=2pt] at (3.5,4.5) {$t$};
    \node[inner sep=2pt] at (4.5,3.5) {$-1$};
    \node[inner sep=2pt] at (0.5,0.5) {$b$};
    \node[inner sep=2pt] at (4.5,4.5) {$a$};
    \node[inner sep=2pt] at (-0.5,-1) {\begin{rotate}{90}$\ddots$\end{rotate}};
    \node[inner sep=2pt] at (6,6) {\begin{rotate}{90}$\ddots$\end{rotate}};
    \node[inner sep=2pt] at (-0.2,-1) {$\xi_2$};
    \node[inner sep=2pt] at (6,5.5) {$\xi_1$};
\end{tikzpicture}
\end{gather*}
So $(a,b)=(t,-1),(-1,t),(t,t),(-1,-1)$. $(\romannumeral1)$ When $(a,b)=(t,-1)$. For any shifted tableaux $T$ of shape $\la^*/\mu^*$ satisfying (M1) and (M2), we can divide $T$ into two disjoint parts $\gamma_1$ and $\gamma_2$. Here $\gamma_1=\xi_1\cup\{t\text{-boxes}\}$ and $\gamma_2=\xi_2\cup\{-1\text{-boxes}\}$. Then $b(T)=b(\gamma_1)+b(\gamma_2)=b(\xi_1)+b(\xi_2)$. $(\romannumeral2)$ When $(a,b)=(t,-1)$. It is similar to $(\romannumeral1)$. $(\romannumeral3)$ When $(a,b)=(t,t)$. For any shifted tableaux $T$ of shape $\la^*/\mu^*$ satisfying (M1) and (M2), we can also divide $T$ into two disjoint parts $\gamma_{1}'$ and $\gamma_{2}'$. And $\gamma_{1}'=\xi_1\cup\{t\text{-boxes}\}\cup\xi_2$, $\gamma_{2}'=\{-1\text{-boxes}\}$. Then $b(\gamma_{1}')=b(\xi_1)+b(\xi_2)-1$ and $b(\gamma_{2}')=1$. So $b(T)=b(\gamma_{1}')+b(\gamma_{2}')=b(\xi_1)+b(\xi_2)$. $(\romannumeral4)$ When $(a,b)=(-1,-1)$. It is similar to $(\romannumeral3)$. So we always have $b(T)=b(\xi_1)+b(\xi_2)$. Therefore $Q_{\la/\mu}(t,-1)=(-t)^{c(\la/\mu)}Q_{\xi_1}(t,-1)Q_{\xi_2}(t,-1)$.\\
{\bf Case 2:} $l(\la)=l(\mu)+2$. In this case $\alpha(\la/\mu)$ has two connected components and the boxes in the main diagonal belong to $\alpha(\la/\mu)$. The skew shifted diagram of $\la/\mu$ can be drawn as follows. Here $a,b,c\in\{t,-1\}$.
\begin{gather*}
  \centering
\begin{tikzpicture}[scale=0.6]
   \coordinate (Origin)   at (0,0);
    \coordinate (XAxisMin) at (0,0);
    \coordinate (XAxisMax) at (15,0);
    \coordinate (YAxisMin) at (0,0);
    \coordinate (YAxisMax) at (0,9);
    \draw [thin, black] (0,0) -- (0,2);
    \draw [thin, black] (1,0) -- (1,2);
    \draw [thin, black] (2,0) -- (2,1);
    \draw [thin, black] (0,0) -- (2,0);
    \draw [thin, black] (0,1) -- (2,1);
    \draw [thin, black] (0,2) -- (1,2);
    \node[inner sep=2pt] at (0.5,1.5) {$t$};
    \node[inner sep=2pt] at (1.5,0.5) {$-1$};
    \node[inner sep=2pt] at (3,1.5) {$\begin{rotate}{90}$\ddots$\end{rotate}$};
    \node[inner sep=2pt] at (2,2.5) {$\begin{rotate}{90}$\ddots$\end{rotate}$};
    \draw [thin, black] (3,4) -- (3,5);
    \draw [thin, black] (4,3) -- (4,5);
    \draw [thin, black] (5,3) -- (5,5);
    \draw [thin, black] (4,3) -- (5,3);
    \draw [thin, black] (3,4) -- (5,4);
    \draw [thin, black] (3,5) -- (5,5);
    \node[inner sep=2pt] at (3.5,4.5) {$t$};
    \node[inner sep=2pt] at (4.5,3.5) {$-1$};
    \node[inner sep=2pt] at (0.5,0.5) {$b$};
    \node[inner sep=2pt] at (4.5,4.5) {$a$};
    \node[inner sep=2pt] at (-0.5,-1) {\begin{rotate}{90}$\ddots$\end{rotate}};
    \node[inner sep=2pt] at (6,6) {\begin{rotate}{90}$\ddots$\end{rotate}};
    \node[inner sep=2pt] at (-0.2,-1) {$\xi_2$};
    \node[inner sep=2pt] at (6,5.5) {$\xi_1$};
    \draw [thin, black] (-1.5,-2) -- (-1.5,-4);
    \draw [thin, black] (-1.5,-2) -- (-3.5,-2);
    \draw [thin, black] (-2.5,-2) -- (-2.5,-4);
    \draw [thin, black] (-3.5,-2) -- (-3.5,-3);
    \draw [thin, black] (-3.5,-3) -- (-1.5,-3);
    \draw [thin, black] (-2.5,-4) -- (-1.5,-4);
    \node[inner sep=2pt] at (-4.5,-4.5) {\begin{rotate}{90}$\ddots$\end{rotate}};
    \node[inner sep=2pt] at (-3.5,-5.5) {\begin{rotate}{90}$\ddots$\end{rotate}};
    \node[inner sep=2pt] at (-3,-2.5) {$t$};
    \node[inner sep=2pt] at (-2,-3.5) {$-1$};
    \node[inner sep=2pt] at (-2,-2.5) {$c$};
    \draw [thin, black] (-6.5,-5) -- (-6.5,-6);
    \draw [thin, black] (-5.5,-5) -- (-5.5,-7);
    \draw [thin, black] (-4.5,-6) -- (-4.5,-7);
    \draw [thin, black] (-6.5,-5) -- (-5.5,-5);
    \draw [thin, black] (-6.5,-6) -- (-4.5,-6);
    \draw [thin, black] (-5.5,-7) -- (-4.5,-7);
    \node[inner sep=2pt] at (-6,-5.5) {$t$};
    \node[inner sep=2pt] at (-5,-6.5) {$-1$};
    \draw [thin, black] (-8.5,-3) -- (-3.5,-8);
    \node[inner sep=2pt] at (-1,-8) {main diagonal};
\end{tikzpicture}
\end{gather*}
Similar to {\bf Case 1}, we discuss it case by case. For any shifted tableaux $T$ of shape $\la^*/\mu^*$ satisfying (M1) and (M2), we always have $b(T)=b(\xi_1)+b(\xi_2)+1$. Therefore we have $Q_{\la/\mu}(t,-1)=(-t)^{c(\la/\mu)}2Q_{\xi_1}(t,-1)Q_{\xi_2}(t,-1)$.
\end{proof}

More generally, we have:
\begin{cor}\label{c:gds}
Let $\la/\mu$ be a generalized double strip, $\beta(\la/\mu)$ has $m$ connected components denoted by $\xi^{(1)},\cdots,\xi^{(m)}$, then we have
\begin{align}\label{e:gds}
Q_{\la/\mu}(t;-1)=(-t)^{c(\la/\mu)}2^{\delta_{l(\la),l(\mu)+2}}\prod_{i=1}^{m}\left(\sum_{\xi^{(i)}\prec\tau}(-1)^{l(\xi^{(i)})-l(\tau)}f_{\tau}\right)
=wt_t(\la/\mu).
\end{align}
\end{cor}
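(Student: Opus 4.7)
The plan is to combine Theorem~\ref{t:casetwo} (which handles $m=2$) with Proposition~\ref{t:SBS} (which evaluates each factor $Q_{\xi^{(i)}}(t,-1)$) and to extend the analysis to arbitrary $m$ directly from the tableau formula \eqref{e:Q(t,-1)}. The strategy is to establish three things: (i) the labeling of $\alpha(\la/\mu)$ is entirely forced by (M1); (ii) the labelings of the $m$ distinct $\beta$-components are independent under (M2); and (iii) the total border-strip count $b(T)$ splits as a sum of per-component counts plus a correction when $\alpha$ meets the main diagonal.

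First I would dispose of the degenerate cases $m \leq 1$ by direct inspection. When $m = 0$ the diagram consists purely of $\alpha$-pieces, the tableau in \eqref{e:Q(t,-1)} is unique, and its weight is exactly $(-t)^{c(\la/\mu)} 2^{\delta_{l(\la), l(\mu)+2}}$. When $m = 1$, either $c(\la/\mu) = 0$ and Proposition~\ref{t:SBS} applies directly, or a simpler version of the four-case analysis of Theorem~\ref{t:casetwo} (with the second $\beta$-strip trivial) yields the result.

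For the main case $m \geq 2$, each length-$2$ diagonal piece of $\alpha(\la/\mu)$ is forced by (M1) to carry a $t$ in its top-left cell and a $-1$ in its bottom-right cell, contributing the scalar $t^{c(\la/\mu)}(-1)^{c(\la/\mu)} = (-t)^{c(\la/\mu)}$ to the weight. Across the $\beta$-components the constraint (M2) factors: within each $\xi^{(i)}$ one obtains exactly the set of tableaux counted by $Q_{\xi^{(i)}}(t,-1)$, while the boundary with any adjacent $\alpha$-cell imposes no extra restriction since the forced $\alpha$-label ($t$ on the top/left, $-1$ on the bottom/right) is consistent with either choice for the neighboring $\beta$-cell. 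Combined with the border-strip decomposition
\begin{equation*}
b(T) \;=\; \sum_{i=1}^m b(T|_{\xi^{(i)}}) \;+\; \delta_{l(\la),\, l(\mu)+2},
\end{equation*}
this yields the product formula $Q_{\la/\mu}(t,-1) = (-t)^{c(\la/\mu)} 2^{\delta_{l(\la), l(\mu)+2}} \prod_{i=1}^m Q_{\xi^{(i)}}(t,-1)$, after which Proposition~\ref{t:SBS} identifies each factor with $\sum_{\xi^{(i)} \prec \tau}(-1)^{l(\xi^{(i)})-l(\tau)} f_\tau$ to give \eqref{e:gds}.

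The hard part is the border-strip bookkeeping. This is a direct extension of the four-subcase check in Theorem~\ref{t:casetwo}: at every interface between an $\alpha$-piece and a $\beta$-component the transition between forced $t$- and $-1$-labels neither splits nor merges a border strip of $T$, so the counts are additive over the $\beta$-components; the single extra border strip accounting for $\delta_{l(\la),\, l(\mu)+2}$ arises precisely when a diagonal-$2$ piece of $\alpha$ contains boxes on the main diagonal, producing its own shifted border strip. Verifying this uniformly when multiple $\alpha$- and $\beta$-pieces intermingle along consecutive diagonals is the one place where careful case analysis is needed; once established, the remainder is formal.
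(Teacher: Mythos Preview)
Your approach is essentially the same as the paper's. The paper's proof is the single line ``It follows from Proposition~\ref{t:SBS} and Theorem~\ref{t:casetwo},'' which is shorthand for exactly what you spell out: the four-case border-strip analysis from Theorem~\ref{t:casetwo} extends to an arbitrary number of $\beta$-components, after which Proposition~\ref{t:SBS} evaluates each factor $Q_{\xi^{(i)}}(t,-1)$. Your explicit formulation of the key identity $b(T)=\sum_i b(T|_{\xi^{(i)}})+\delta_{l(\lambda),l(\mu)+2}$ and your observation that each interior $\alpha$-piece contributes net zero to the border-strip count (one possible merger balanced by one possible standalone strip), with the single extra strip arising only from the $\alpha$-piece touching the main diagonal when $l(\lambda)=l(\mu)+2$, is precisely the content the paper leaves implicit.
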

\begin{proof}
It follows from Proposition \ref{t:SBS} and Theorem \ref{t:casetwo}.
\end{proof}

The Murnaghan-Nakayama rule can be stated as follows:
\begin{cor}\label{c:comb-MN}
Let $\la\in\mathcal{SP}_n$ and $\mu\in\mathcal{OP}_n$, then
\begin{align}
\zeta^{\la}_{\mu}(q)=\sum_{\nu}2^{\epsilon(\nu)-\epsilon(\la)}(q-1)^{-1}wt_q(\la/\nu)\zeta^{\nu}_{\mu^{[1]}}(q)
\end{align}
where the sum runs over all strict partitions $\nu\subset\la$ such that $\la/\nu$ is a $\mu_1$-generalized double strip.
\end{cor}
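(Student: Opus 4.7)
The plan is to reduce the corollary to the coefficient expansion of $g^*_{\mu_1}Q_\la.1$ established just above and the evaluation of $Q_{\la/\nu}(t,-1)$ from Corollary \ref{c:gds}. First, starting from the Frobenius-type formula \eqref{e:matcoeff}, I factor off the first part of $\mu$ and pass $g_{\mu_1}$ to the right via its adjoint:
\[
\zeta^{\la}_{\mu}(q)=\frac{2^{-\epsilon(\la)}}{(q-1)^{l(\mu)}}\langle g_{\mu_1}g_{\mu^{[1]}},Q_\la.1\rangle=\frac{2^{-\epsilon(\la)}}{(q-1)^{l(\mu)}}\langle g_{\mu^{[1]}},g^*_{\mu_1}Q_\la.1\rangle.
\]

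Next, I invoke the theorem immediately preceding the corollary to expand
\[
g^*_{\mu_1}Q_\la.1=\sum_{\nu}Q_{\la/\nu}(q,-1)\,Q_\nu.1,
\]
the sum ranging over strict partitions $\nu\subset\la$ of weight $n-\mu_1$. Because the skew Schur $Q$-function $Q_{\la/\nu}(x_1,x_2)$ is, by its tableau definition, a sum over shifted tableaux with entries in $\{x_1,x_2\}$ subject to the no-$2\times 2$-constant-block rule, it vanishes identically unless $\la^*/\nu^*$ decomposes into two nested generalized strips, i.e.\ unless $\la/\nu$ is a generalized double strip. For such $\nu$, Corollary \ref{c:gds} supplies the closed form $Q_{\la/\nu}(q,-1)=wt_q(\la/\nu)$, so the sum is automatically restricted to $\mu_1$-generalized double strips $\la/\nu$.

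Finally, I apply the Frobenius formula \eqref{e:matcoeff} a second time, now to the pairing with $Q_\nu.1$: for each such $\nu\in\mathcal{SP}_{n-\mu_1}$,
\[
\langle g_{\mu^{[1]}},Q_\nu.1\rangle=2^{\epsilon(\nu)}(q-1)^{l(\mu^{[1]})}\zeta^{\nu}_{\mu^{[1]}}(q).
\]
Substituting back and using $l(\mu)=l(\mu^{[1]})+1$ collapses the powers of $(q-1)$ to a single $(q-1)^{-1}$ and gives
\[
\zeta^{\la}_{\mu}(q)=\sum_{\nu}2^{\epsilon(\nu)-\epsilon(\la)}(q-1)^{-1}wt_q(\la/\nu)\,\zeta^{\nu}_{\mu^{[1]}}(q),
\]
which is precisely the claimed formula.

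The only point requiring any care is the vanishing statement for $Q_{\la/\nu}(x_1,x_2)$ outside generalized double strips, but this is immediate from the shifted-tableau definition. All the deep content — the commutation of $g^*(z)$ with $\mathbf{Q}(w)$, the Pfaffian evaluation of $g^*_kQ_\la.1$ via the Wick rule, and the combinatorial identification ${\rm Pf}(\hat M(\la/\nu))=wt_t(\la/\nu)$ — has already been absorbed into Corollary \ref{c:gds} and the theorem preceding the present corollary, so the argument here is purely bookkeeping of $2$'s and $(q-1)$'s.
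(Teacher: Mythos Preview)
Your proof is correct and follows exactly the approach implicit in the paper: factor off $g_{\mu_1}$ via adjointness, apply the theorem giving $g^*_{k}Q_\la.1=\sum_\nu Q_{\la/\nu}(t,-1)Q_\nu.1$, use the vanishing of $Q_{\la/\nu}(x_1,x_2)$ outside generalized double strips together with Corollary~\ref{c:gds} to replace $Q_{\la/\nu}(q,-1)$ by $wt_q(\la/\nu)$, and then invoke \eqref{e:matcoeff} once more for $\nu$. The paper states the corollary without a separate proof precisely because this bookkeeping is immediate from the chain of results already established.
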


\begin{rem}
Let $\la/\mu$ be a generalized double strip. It follows from Remark \ref{r:sbs} and Corollary \ref{c:gds} that $\frac{Q_{\la/\mu}(t;-1)}{t-1}\mid_{t=1}=0$ unless $(\romannumeral1)$ $c(\la/\mu)=0$, $m(\la/\mu)=1$ or $(\romannumeral2)$ $c(\la/\mu)\geq 1$, $m(\la/\mu)=1$, i.e., $\la/\mu$ is a (\romannumeral1) shifted border strip or (\romannumeral2) double strip.\\
{\bf Case 1:} When $\la/\mu$ is a shifted border strip, then $l(\mu)\leq l(\la)\leq l(\mu)+1$. By Remark \ref{r:sbs} and Corollary \ref{c:gds}, we have
\begin{align*}
\frac{Q_{\la/\mu}(t,-1)}{t-1}\mid_{t=1}=
\begin{cases}
2(-1)^{ht(\la/\mu)}& \text{if $|\la/\mu|$ is odd};\\
0& \text{if $|\la/\mu|$ is even}.
\end{cases}
\end{align*}
{\bf Case 2:} When $\la/\mu$ is a shifted border strip, then $l(\la)=l(\mu)+2$. By Corollary \ref{c:gds}, we have
\begin{align*}
\frac{Q_{\la/\mu}(t,-1)}{t-1}\mid_{t=1}=
\begin{cases}
4(-1)^{c(\la/\mu)+ht(\beta(\la/\mu))}& \text{if $|\la/\mu|$ is odd};\\
0& \text{if $|\la/\mu|$ is even}.
\end{cases}
\end{align*}
Denote
\begin{align*}
\pi(\la/\mu)=
\begin{cases}
(-1)^{ht(\la/\mu)}& \text{if $\la/\mu$ is a shifted border strip};\\
2(-1)^{c(\la/\mu)+ht(\beta(\la/\mu))}& \text{if $\la/\mu$ is a double strip}.
\end{cases}
\end{align*}
Note that
\begin{align*}
\tilde{g}_{k}(t)\mid_{t=1}&=
\begin{cases}
2p_k& \text{if $k$ is odd};\\
0& \text{if $k$ is even}.
\end{cases}\\
\tilde{g}_{k}(t)&=\frac{1}{t-1}g_k.
\end{align*}
Therefore
\begin{align*}
p^*_{k}Q_{\la}=\sum\pi(\la/\mu)Q_{\mu} \quad (k \text{ is odd})
\end{align*}
summed over all strict partitions $\mu\subset\la$ such that $\la/\mu$ is either a $k$-shifted border strip (case \romannumeral1) or a $k$-double strip (case \romannumeral2).

The Hecke-Clifford algebra $\mathcal{H}^c_n$ $q$-deforms ($q=1$) the algebra $\mathfrak{H}^c_n=C_n\rtimes S_n$. So our result includes the classical Murnaghan-Nakayama rule for algebra $\mathfrak{H}_n^c=C_n\rtimes S_n$ \cite[p. 266]{Mac} and also explains why generalized double strips are not needed in the case of $\mathfrak{H}_n^c$. (There seems a typo in formula (8) in \cite[p. 266]{Mac}: a factor of 2 is missing).
\end{rem}

Recall that a partition-valued function $\underline{\lambda}=(\lambda^{(1)},\lambda^{(2)},\cdots)$ is a sequence of partitions. If all $\lambda^{(i)}$ are strict, $\underline{\lambda}$ is called a strict partition-valued function.
It is called {\it ordered} if $\lambda^{(i)}\supset\lambda^{(i+1)}$ for any $i\geq 1$. For an ordered strict partition-value function $\underline{\lambda}=(\lambda^{(1)},\lambda^{(2)},\cdots,\lambda^{(r)}),$ we define its weight by
\begin{align}
wt(\underline{\lambda})\doteq \prod_{i=1}^{r-1} wt_q(\lambda^{(i)}/\lambda^{(i+1)}).
\end{align}
The following is immediate.
\begin{cor} \label{t:general}
Let $\lambda\in\mathcal{SP}_n$, $\mu\in\mathcal{OP}_n,$ $l(\mu)=m.$ Then we have
\begin{align}\label{e:general}
\zeta^{\lambda}_{\mu}(q)=2^{-\epsilon(\lambda)}(q-1)^{-m}\sum_{\underline{\lambda}}wt(\underline{\lambda})
\end{align}
summed over all ordered strict partition-value functions $\underline{\lambda}=(\lambda^{(0)},\lambda^{(1)},\lambda^{(2)},\cdots,\lambda^{(m)})$ ($\la^{(0)}=\la$) such that $\lambda^{(i-1)}/\la^{(i)}$ is a $\mu_i$-generalized double strip $i=1,2,\ldots,m.$
\end{cor}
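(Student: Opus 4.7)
The plan is to obtain Corollary \ref{t:general} by iterating the one-step Murnaghan--Nakayama rule of Corollary \ref{c:comb-MN} exactly $m=l(\mu)$ times. Since $\mu\in\mathcal{OP}_n$, we have $\mu^{[1]}=(\mu_2,\ldots,\mu_m)$, and applying Corollary \ref{c:comb-MN} once peels off the part $\mu_1$ and expresses $\zeta^{\la}_{\mu}(q)$ as a sum over strict partitions $\la^{(1)}\subset\la$ with $\la/\la^{(1)}$ a $\mu_1$-generalized double strip, weighted by $2^{\epsilon(\la^{(1)})-\epsilon(\la)}(q-1)^{-1}wt_q(\la/\la^{(1)})$, times $\zeta^{\la^{(1)}}_{\mu^{[1]}}(q)$.

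I would then induct on $m$. The base case $m=0$ forces $n=0$, $\la=\emptyset$, and $\zeta^{\emptyset}_{\emptyset}(q)=1$ matches the empty product on the right. For the inductive step, assuming the formula holds for $\mu^{[1]}$ and any strict partition of weight $n-\mu_1$, I substitute the inductive expression for $\zeta^{\la^{(1)}}_{\mu^{[1]}}(q)$ into the one-step recursion. This produces a sum over chains $\la^{(1)}\supset\la^{(2)}\supset\cdots\supset\la^{(m)}=\emptyset$ such that each $\la^{(i-1)}/\la^{(i)}$ (for $i\geq 2$) is a $\mu_i$-generalized double strip, with weight
\[
2^{\epsilon(\la^{(1)})-\epsilon(\la)}(q-1)^{-1}wt_q(\la/\la^{(1)})\cdot 2^{-\epsilon(\la^{(1)})}(q-1)^{-(m-1)}\prod_{i=2}^{m}wt_q(\la^{(i-1)}/\la^{(i)}).
\]

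The two obvious bookkeeping checks are the only real work. First, the exponents of $2$ telescope: the factor $2^{\epsilon(\la^{(1)})-\epsilon(\la)}$ from the one-step rule combines with $2^{-\epsilon(\la^{(1)})}$ from the inductive hypothesis to give $2^{-\epsilon(\la)}$, which is exactly the prefactor claimed in \eqref{e:general}. Second, the $(q-1)$-powers multiply to give $(q-1)^{-m}$, again matching. The remaining factors assemble into $\prod_{i=1}^{m}wt_q(\la^{(i-1)}/\la^{(i)})=wt(\underline{\la})$ by the definition preceding Corollary \ref{t:general}, and the sum ranges over exactly the ordered strict partition-valued functions $\underline{\la}=(\la^{(0)},\ldots,\la^{(m)})$ with $\la^{(0)}=\la$ described in the statement.

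The main (minor) obstacle is a domain check: one must verify that the terminal partition $\la^{(m)}$ is necessarily empty, so that no additional $\zeta^{\emptyset}_{\emptyset}(q)=1$ factor need be tracked. This is automatic since $|\la^{(i-1)}|-|\la^{(i)}|=\mu_i$ and $\sum_i\mu_i=n=|\la|$ forces $|\la^{(m)}|=0$. Apart from this, the argument is a clean induction using Corollary \ref{c:comb-MN} as the only non-trivial input.
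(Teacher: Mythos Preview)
Your proposal is correct and follows exactly the approach the paper intends: the paper introduces Corollary \ref{t:general} with ``The following is immediate,'' meaning it is obtained by iterating Corollary \ref{c:comb-MN} $m$ times, precisely as you carry out by induction on $m=l(\mu)$. Your bookkeeping checks (telescoping of the $2^{\epsilon(\cdot)}$ factors, accumulation of the $(q-1)^{-1}$ factors, and the forced vanishing $\la^{(m)}=\emptyset$) are the only details to verify, and you have handled them correctly.
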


\begin{exmp}
Given $\lambda=(4,2,1),$ $\mu=(3,3,1),$ then $\lambda^{(1)}=(4)~ {\rm or}~(3,1),$ $\lambda^{(2)}=(1),$ $\lambda^{(3)}=\emptyset.$ The relevant values are calculated as follows
\begin{gather*}
  \centering
\begin{tikzpicture}
\node[] (1) at(1,0) {\begin{tikzpicture}[scale=0.6]
   \coordinate (Origin)   at (0,0);
    \coordinate (XAxisMin) at (0,0);
    \coordinate (XAxisMax) at (4,0);
    \coordinate (YAxisMin) at (0,-3);
    \coordinate (YAxisMax) at (0,0);
\draw [thin, black] (0,0) -- (4,0);
    \draw [thin, black] (0,-1) -- (4,-1);
    \draw [thin, black] (0,-2) -- (2,-2);
    \draw [thin, black] (0,-3) -- (1,-3);
    \draw [thin, black] (0,0) -- (0,-3);
    \draw [thin, black] (1,0) -- (1,-3);
    \draw [thin, black] (2,0) -- (2,-2);
    \draw [thin, black] (3,0) -- (3,-1);
    \draw [thin, black] (4,0) -- (4,-1);
    \end{tikzpicture}};
\node[] (2) at(6,1) {\begin{tikzpicture}[scale=0.6]
   \coordinate (Origin)   at (0,0);
    \coordinate (XAxisMin) at (0,0);
    \coordinate (XAxisMax) at (4,0);
    \coordinate (YAxisMin) at (0,-3);
    \coordinate (YAxisMax) at (0,0);
\draw [thin, black] (0,0) -- (4,0);
    \draw [thin, black] (0,-1) -- (4,-1);
    \draw [thin, black] (0,0) -- (0,-1);
    \draw [thin, black] (1,0) -- (1,-1);
    \draw [thin, black] (2,0) -- (2,-1);
    \draw [thin, black] (3,0) -- (3,-1);
    \draw [thin, black] (4,0) -- (4,-1);
    \end{tikzpicture}};
\node[] (3) at(6,-1) {\begin{tikzpicture}[scale=0.6]
   \coordinate (Origin)   at (0,0);
    \coordinate (XAxisMin) at (0,0);
    \coordinate (XAxisMax) at (4,0);
    \coordinate (YAxisMin) at (0,-3);
    \coordinate (YAxisMax) at (0,0);
\draw [thin, black] (0,0) -- (3,0);
    \draw [thin, black] (0,-1) -- (3,-1);
    \draw [thin, black] (0,-2) -- (1,-2);
    \draw [thin, black] (0,0) -- (0,-2);
    \draw [thin, black] (1,0) -- (1,-2);
    \draw [thin, black] (2,0) -- (2,-1);
    \draw [thin, black] (3,0) -- (3,-1);
    \end{tikzpicture}};
\node[] (4) at(9.8,0) {\begin{tikzpicture}[scale=0.6]
   \coordinate (Origin)   at (0,0);
    \coordinate (XAxisMin) at (0,0);
    \coordinate (XAxisMax) at (4,0);
    \coordinate (YAxisMin) at (0,-3);
    \coordinate (YAxisMax) at (0,0);
\draw [thin, black] (0,0) -- (1,0);
    \draw [thin, black] (0,0) -- (0,-1);
    \draw [thin, black] (1,0) -- (1,-1);
    \draw [thin, black] (0,-1) -- (1,-1);
    \end{tikzpicture}};
    \node[] (5) at(12,0) {$\emptyset$};
\draw[->] (1)--(2);
\draw[->] (1)--(3);
\draw[->] (2)--(4);
\draw[->] (3)--(4);
\draw[->] (4)--(5);
\node at(3.5,1) {$f_{(2,1)}$};
\node at(3.5,-1) {$f_1^3-f_0f_1f_2$};
\node at(8.5,0.8) {$f_{3}$};
\node at(8.2,-1) {$f_{1}f_2-f_0f_3$};
\node at(10.9,0.5) {$f_{1}$};
\end{tikzpicture}
\end{gather*}
Therefore
\begin{align*}
\zeta_{(3,3,1)}^{(4,2,1)}(q)&=\frac{1}{2(q-1)^{3}}\left(f_{(2,1)}f_3f_1+(f_1^3-f_0f_1f_2)(f_1f_2-f_0f_3)f_1\right)\\
&=8(q^4-6q^3+9q^2-6q+1).
\end{align*}
\end{exmp}

We consider some special cases.

($\uppercase\expandafter{\romannumeral1}$) When $\mu=(n),$ note that
\begin{align*}
wt_t(\la/\emptyset)=
\begin{cases}
f_{\la_1},& l(\la)=1;\\
2(-t)^{\lambda_2}f_{\lambda_1-\lambda_2},& l(\lambda)= 2;\\
0,& l(\lambda)\geq3.
\end{cases}
\end{align*}
Therefore
\begin{align}
\zeta_{(n)}^{\lambda}(q)=
\begin{cases}
2(-q)^{\lambda_2}(\lambda_1-\lambda_2)_q,& l(\lambda)\leq 2;\\
0,& l(\lambda)\geq3.
\end{cases}
\end{align}
So $\tilde{g}_n=\sum\limits_{i=0}^{\frac{n-1}{2}}(-q)^i(n-2i)_qQ_{(n-k,k)}.$

($\uppercase\expandafter{\romannumeral2}$) When $\mu=(1^n),$ then
\begin{align}
\zeta^{\lambda}_{(1^n)}(q)=2^{-\epsilon(\lambda)}(q-1)^{-n}\sum_{\underline{\lambda}}wt(\underline{\lambda})
\end{align}
summed over all ordered strict partition-value functions $\underline{\lambda}=(\lambda,\lambda^{(1)},\lambda^{(2)},\cdots,\lambda^{(n)})$ such that $\lambda^{(i)}\in\mathcal{SP}_{n-i},$ $i=1,2,\ldots,n.$ By \eqref{e:gds}, we have $wt_q(\lambda/\mu)=f_1$ provided that $|\lambda/\mu|=1.$ In this case, the number of $\underline{\lambda}$ is $g^{\lambda},$ and they have the same weight $f^n_{1}.$ Therefore,
\begin{align}
\zeta^{\lambda}_{(1^n)}(q)=2^{n-\epsilon(\lambda)}g^{\lambda}=\frac{2^{n-\epsilon(\lambda)}n!}{\la_1!\cdots\la_l!}\prod_{1\leq i<j\leq l}\frac{\la_i-\la_j}{\la_i+\la_j}.
\end{align}

($\uppercase\expandafter{\romannumeral3}$) When $\mu=(k,1^{n-k}),$ we have
\begin{align*}
G^{\lambda}_{(k,1^{n-k})}(t)&=\langle g_kg_1\cdots g_1, Q_{\lambda}.1 \rangle\\
&=\langle g_1^{n-k}, \sum_{\nu} wt_t(\lambda/\nu)Q_{\nu}.1 \rangle\\
&=\sum_{\nu} wt_t(\lambda/\nu)f_1^{n-k}g^{\nu}.
\end{align*}
Therefore
\begin{align}
\zeta^{\lambda}_{(k,1^{n-k})}(q)=\frac{2^{n-k-\epsilon(\lambda)}}{q-1}\sum_{\nu} wt_q(\lambda/\nu)g^{\nu}.
\end{align}
summed over all strict partitions $\nu\subset\la$ such that $\la/\nu$ is a $k$-generalized double strip.
\section{Spin bitrace: the second orthogonality relation for $\zeta^{\lambda}_{\mu}(q)$}
In this section, we introduce the spin bitrace of the Hecke-Clifford algebra $\mathcal{H}^c_{n}$ as an analogue of the bitrace for the Hecke algebra \cite{HLR}.

Recall that $\mathcal{H}^c_{n}$ has a linear basis consisting of $T_{\sigma}C_{I},$ $\sigma\in S_n, I\subset [n].$ For any $x, y\in S_n,$ $I, J\subset [n],$ we define the spin bitrace of $T_xC_{I}$ and $T_yC_{J}$ as follows:
\begin{align}
\sbtr(T_xC_{I}, T_yC_{J})\doteq\sum_{\mbox{\tiny$\begin{array}{c}
z\in S_n\\
K\subset [n]\end{array}$}}T_xC_{I}T_zC_{K}T_{y}C_{J}\mid_{T_zC_{K}}
\end{align}
where $T_xC_{I}T_zC_{K}T_{y}C_{J}\mid_{T_zC_{K}}$ denotes the coefficient of the basis element $T_zC_{I}$ in the expansion of $T_xC_{I}T_zC_{K}T_{y}C_{J}.$

Let $L_{x,I}$ and $R_{y,I}$ be the left and the right multiplication of $\mathcal{H}^c_{n}$ by $T_xC_{I}$ on $\mathcal{H}^c_{n}$ respectively. If $x, y \in S_n$, $I, J\subset [n],$ then $L_{x,I}$ and $R_{y,J}$ commute and
\begin{align*}
\sbtr(T_xC_{I}, T_yC_{J})= Tr(L_{x,I}R_{y,J}).
\end{align*}

 Note that irreducible modules of $\mathcal{H}^c_{n}$ are divided into two types: type $M$ and type $Q$ due to the general property of supermodules \cite{Jo}. For $\lambda\in\mathcal{SP}_n,$ the type of irreducible module $V_{\lambda}$ is denoted by $\delta(\lambda)$: $\delta(\lambda)=0$ or $1$ if $V_{\lambda}$ is of type $M$ or $Q$ respectively. Moreover, it is known that \cite{WW}
\begin{align}\label{e:Hom}
{\rm dimHom}_{\mathcal{H}^c_n}(V_{\lambda}, V_{\mu})=2^{\delta(\lambda)}\delta_{\lambda\mu}.
\end{align}

If $V_i$ ($i=1, 2$) are left $\mathcal{H}^c_{n}$ simple supermodules, then $V_1\otimes V_2$ remains an irreducible simple $\mathcal{H}^c_{n}\otimes \mathcal{H}^c_{n}$-supermodule if one of $V_i$ is of type $M$ or $V_1\otimes V_2$ decomposes into $U\oplus U$ of irreducible supermodules of type $M$ if both $V_i$ are of type $Q$. In the latter case, we denote the irreducible summand $U$ of $V_1\otimes V_2$ as $2^{-1}V_1\otimes V_2$.
It is known that all irreducible $\mathcal{H}^c_{n}\otimes \mathcal{H}^c_{n}$-supermodules can be realized this way.

Note that the action of left $\mathcal{H}^c_n$-module commutes with that of right $\mathcal{H}^c_n$-module.
As a $\mathcal{H}^c_{n}\times \mathcal{H}^c_{n}$-bimodule, $\mathcal{H}^c_{n}$ decomposes into a direct sum of simple $\mathcal{H}^c_{n}\times \mathcal{H}^c_{n}$-supermodules.
It follows from \eqref{e:Hom} that the irreducible $\mathcal{H}^c_{n}\times \mathcal{H}^c_{n}$-bimodule summands of $\mathcal{H}^c_{n}$
are either $V_{\lambda}\otimes V^{\lambda}$ if one of the factors is of type $M$ or
$2^{-1}V_{\lambda}\otimes V^{\lambda}$ if both factors are in type $Q$, where $V_{\lambda}$ (resp. $V^{\lambda}$) is the irreducible left (resp. right) $\mathcal{H}^c_n$-module labeled by $\lambda$.
 Consequently by the structure theorem \cite[Prop. 2.11]{Jo} and \eqref{e:Hom} it follows that
\begin{align}\label{e:Hdecomposition}
\mathcal{H}^c_n\cong\bigoplus_{\lambda\in\mathcal{SP}_n} 2^{-\delta(\lambda)}V_{\lambda}\otimes V^{\lambda}
\end{align}
as $\mathcal{H}^c_n\times \mathcal{H}^c_n$-bimodules. We remark that can also see that
$\mathcal{H}^c_n\cong\bigoplus_{\lambda\in\mathcal{SP}_n} 2^{-\delta(\lambda)}(\dim V_{\lambda}) V_{\lambda}$ as left $\mathcal{H}^c_n$-modules
by using \eqref{e:Hom}.

Note that $V^{\lambda}\cong V_{\lambda}$. Taking trace on both sides of \eqref{e:Hdecomposition} gives
\begin{align}
\sbtr(T_xC_{I}, T_yC_{J})=\sum_{\lambda\in\mathcal{SP}_n}2^{-\delta(\lambda)}\zeta^{\lambda}(T_xC_{I})\zeta^{\lambda}(T_yC_{J}).
\end{align}

Note that any character of $\mathcal{H}^c_n$ is completely determined by its values on the elements $T_{\gamma_{\mu}}, \mu\in\mathcal{OP}_n$ (see \cite{WW} Corollary 4.9). Therefore, we can define
\begin{align*}
\sbtr(\mu, \nu)\doteq btr(T_{\gamma_{\mu}} ,T_{\gamma_{\nu}})
\end{align*}
for any two odd compositions $\mu, \nu$ of $n$.

We can calculate the spin bitrace using vertex operators. Recall the Frobenius formula \cite{WW}:
\begin{align*}
\tilde{g}_{\mu}=\sum_{\lambda\in\mathcal{SP}_n}2^{-\frac{l(\lambda)+\delta(\lambda)}{2}}Q_{\lambda}\zeta^{\lambda}(T_{\gamma_{\mu}}).
\end{align*}
Therefore we have
\begin{align}\label{e:gg}
\sbtr(\mu, \nu)=\sbtr(T_{\gamma_{\mu}} ,T_{\gamma_{\nu}})=\sum_{\lambda\in\mathcal{SP}_ n}2^{-\delta(\lambda)}\zeta^{\lambda}(T_{\gamma_{\mu}})\zeta^{\lambda}(T_{\gamma_{\nu}})=\langle \tilde{g}_{\mu}, \tilde{g}_{\nu} \rangle.
\end{align}

The Hecke-Clifford algebra $\mathcal{H}^c_n$ is a $q$-deformation of the algebra $\mathfrak{H}^c_n=C_n\rtimes S_n$, whose category of finite dimensional modules is known to be equivalent to that of Schur's spin symmetric group algebra \cite{Y}. The specialization at $q=1$ of the irreducible character $\zeta^{\lambda}_{\mu}(q)$ recovers the character $\chi^{\lambda}_{\mu}$ of $\mathfrak{H}^c_n,$.

Note that
\begin{align*}
\tilde{g}_{k}\mid_{q=1}=
\begin{cases}
2p_k& \text{if $k$ is odd}\\
0&\text{if $k$ is even}
\end{cases}
\end{align*}
for any $k\geq 1.$

Thus, the specialization at $q=1$ of \eqref{e:gg} gives
\begin{align}
\sbtr(\mu, \nu)\mid_{q=1}=\sum_{\lambda\vdash_{s} n}2^{-\delta(\lambda)}\chi^{\lambda}_{\mu}\chi^{\lambda}_{\nu}=\langle \tilde{g}_{\mu}, \tilde{g}_{\nu} \rangle\mid_{q=1}=\langle 2^{l(\mu)}p_{\mu}, 2^{l(\nu)}p_{\nu} \rangle=2^{l(\mu)}z_{\mu}\delta_{\mu,\nu}.
\end{align}
So the spin bitrace can be viewed as a deformation of the second orthogonality relation for $\zeta^{\lambda}_{\mu}(q).$

If we denote $T^{\mu}_{\nu}(t)=\langle g_{\mu}, g_{\nu} \rangle,$ then
\begin{align}
\sbtr(\mu, \nu)=\langle \tilde{g}_{\mu}, \tilde{g}_{\nu} \rangle=\frac{1}{(q-1)^{\l(\mu)+l(\nu)}}\langle g_{\mu}, g_{\nu} \rangle=\frac{1}{(q-1)^{\l(\mu)+l(\nu)}}T^{\mu}_{\nu}(q).
\end{align}
Next, we compute $T^{\mu}_{\nu}(t)$ using the technique developed in the previous sections and derive a general combinatorial formula. As an application, we will compute the regular character $\zeta^{reg}.$

\begin{prop}\label{t:g^*g} For any $n, m\in \mathbb{Z},$ the components $g_n$ and $g^*_m$ satisfy the following relation:
\begin{align}\label{e:g^*g}
\begin{split}
g^*_{m}g_{n}&=g_{n}g^*_{m}+(t-1)^2(g^*_{m-1}g_{n-1}+g_{n-1}g^*_{m-1})+2t(t^2-t+1)(g^*_{m-2}g_{n-2}-g_{n-2}g^*_{m-2})\\
&+t^2(t-1)^2(g^*_{m-3}g_{n-3}+g_{n-3}g^*_{m-3})-t^4(g^*_{m-4}g_{n-4}-g_{n-4}g^*_{m-4}).
\end{split}
\end{align}
\end{prop}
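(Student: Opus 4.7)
The strategy is a standard vertex-operator commutation computation, parallel to the derivation of \eqref{e:relations1}, converted into a polynomial identity by clearing denominators. First, observe that $\mathbf g^*(z)$ is built from annihilation operators $\frac{\partial}{\partial p_n}$ only, while $\mathbf g(w)$ is built from multiplication operators $p_m$ only, so by the Baker--Campbell--Hausdorff formula
\begin{equation*}
\mathbf g^*(z)\mathbf g(w)=e^{[\alpha,\beta]}\,\mathbf g(w)\mathbf g^*(z),
\end{equation*}
where $\alpha=\sum_{n\,\text{odd}}(t^n-1)z^{-n}\frac{\partial}{\partial p_n}$ and $\beta=\sum_{m\,\text{odd}}\frac{2(t^m-1)}{m}w^m p_m$. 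The commutator is a scalar:
\begin{equation*}
[\alpha,\beta]=\sum_{n\,\text{odd}}\frac{2(t^n-1)^2}{n}\Big(\frac{w}{z}\Big)^n.
\end{equation*}

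Next, using $\sum_{n\,\text{odd}}\frac{x^n}{n}=\frac12\log\frac{1+x}{1-x}$ applied with $x=t^2u$, $x=tu$ and $x=u$, where $u=w/z$, the scalar exponentiates to the rational function
\begin{equation*}
F(u)=\frac{(1+t^2u)(1+u)(1-tu)^2}{(1-t^2u)(1-u)(1+tu)^2}=\frac{N(u)}{D(u)},
\end{equation*}
which I would clear by multiplying through to obtain the polynomial identity
\begin{equation*}
D(w/z)\,\mathbf g^*(z)\mathbf g(w)=N(w/z)\,\mathbf g(w)\mathbf g^*(z).
\end{equation*}

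A direct expansion (the only step involving real bookkeeping) gives
\begin{align*}
N(u)&=1+(t-1)^2 u-2t(t^2-t+1)u^2+t^2(t-1)^2 u^3+t^4u^4,\\
D(u)&=1-(t-1)^2 u-2t(t^2-t+1)u^2-t^2(t-1)^2 u^3+t^4u^4,
\end{align*}
so that $D(u)=N(-u)$; here one also checks that the $u^2$ and $u^4$ coefficients agree between $N$ and $D$ while the $u^1$ and $u^3$ coefficients flip sign. Finally, I would extract the coefficient of $z^{-m}w^n$ on both sides, noting that $(w/z)^k$ shifts indices by $k$, and collect like terms: the entries with equal $u^{2j}$ coefficients in $N$ and $D$ contribute an anticommutator-like combination $g^*_{m-2j}g_{n-2j}\pm g_{n-2j}g^*_{m-2j}$ with the appropriate sign, while the opposite-sign $u^{2j+1}$ coefficients contribute the commutator-like combination. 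Matching the five terms $k=0,1,2,3,4$ to the five summands of the claimed identity yields \eqref{e:g^*g}.

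The main obstacle is purely computational, namely verifying the explicit factorization of $N(u)$ and $D(u)$ and correctly tracking the signs when equating coefficients of $z^{-m}w^n$ on the two sides; once the polynomial identity $D(w/z)\mathbf g^*(z)\mathbf g(w)=N(w/z)\mathbf g(w)\mathbf g^*(z)$ is in hand, the proposition is immediate.
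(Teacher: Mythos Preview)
Your proposal is correct and follows essentially the same approach as the paper: both derive the operator identity $\mathbf g^*(z)\mathbf g(w)=\mathbf g(w)\mathbf g^*(z)\frac{(1+t^2u)(1+u)(1-tu)^2}{(1-t^2u)(1-u)(1+tu)^2}$ with $u=w/z$ via the exponential commutator, clear denominators, and compare coefficients of $z^{-m}w^n$. Your write-up simply supplies the explicit expansion of $N(u)$ and $D(u)$ that the paper leaves implicit.
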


\begin{proof} By vertex operator calculus,
we have
\begin{align*}
\mathbf g^*(z)\mathbf g(w)=\mathbf g(w)\mathbf g^*(z)\frac{z+t^2w}{z-t^2w}(\frac{z-tw}{z+tw})^2\frac{z+w}{z-w}.
\end{align*}
Then \eqref{e:g^*g} follows by comparing coefficients.
\end{proof}

For convenience, we define for $n\geq 0$
\begin{align}\label{e:alpha}
\alpha_n(t)=\sum_{\rho\vdash_o n}\frac{2^{l(\rho)}z_{\rho}}{z^2_{\rho}(t)}=\sum_{\rho\vdash_o n}\frac{2^{l(\rho)}\prod(t^{\rho_i}-1)^2}{z_{\rho}}.
\end{align}
By convention $\alpha_n(t)=0$ for $n < 0$ and $\alpha_0(t)=1$. Sometimes we will abbreviate $\alpha_n$ by $\alpha_n(t).$ We also denote $\alpha_{\lambda}(t)=\prod\alpha_{\lambda_i}(t)$ for a composition $\lambda$. The first few terms can be listed as follows.
\begin{align*}
\alpha_0(t)=1, \alpha_{1}(t)=2(t-1)^2,  \alpha_2(t)=2(t-1)^4, \cdots
\end{align*}
We have the following iterative relations for $\alpha_n.$
\begin{lem}\label{t:alpharelation}
The following relations of $\alpha_n$ hold:
\begin{align}
\begin{split}\label{e:alpharelation1}
\alpha_1&=2(t-1)^2\alpha_0, \quad \alpha_{2}=(t-1)^2\alpha_1,\\
\alpha_3&=(t-1)^2\alpha_2+2t(t^2-t+1)\alpha_1+2t^2(t-1)^2\alpha_0,\\
\alpha_4&=(t-1)^2\alpha_3+2t(t^2-t+1)\alpha_2+t^2(t-1)^2\alpha_1,\\
\end{split}\\\label{e:alpharelation2}
\alpha_n&=(t-1)^2\alpha_{n-1}+2t(t^2-t+1)\alpha_{n-2}+t^2(t-1)^2\alpha_{n-3}-t^4\alpha_{n-4}  \quad (n\geq 5).
\end{align}
\end{lem}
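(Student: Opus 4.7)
The plan is to realize $\alpha_n$ as a vertex operator matrix coefficient and read the recursions off Proposition~\ref{t:g^*g} by applying it to the vacuum.

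First I would verify $\alpha_n=\langle g_n,g_n\rangle$: since $g_n=\sum_{\rho\in\mathcal{OP}_n}\frac{(-2)^{l(\rho)}}{z_\rho(t)}p_\rho$ and $\langle p_\rho,p_\rho\rangle=2^{-l(\rho)}z_\rho$ for $\rho\in\mathcal{OP}$, orthogonality of the $p$-basis yields $\langle g_n,g_n\rangle=\sum_{\rho\vdash_o n}\frac{2^{l(\rho)}z_\rho}{z_\rho(t)^2}=\alpha_n(t)$. By adjointness this equals $\langle g^*_n g_n.1,\,1\rangle$, which is exactly the type of expression that the commutation relation \eqref{e:g^*g} controls.

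Next I would apply Proposition~\ref{t:g^*g} with $m=n$ to the vector $1$ and pair the resulting identity with $1$. Two simple observations drive the computation: (i) $\mathbf g^*(z).1=1$ because $\mathbf g^*(z)$ is an exponential of derivation operators, equivalently $g^*_0.1=1$ and $g^*_k.1=0$ for every $k\geq 1$; consequently $g_k g^*_k.1=0$ for all $k\geq 1$; (ii) by convention $g_{-k}=g^*_{-k}=0$ for $k\geq 1$. Pairing with $1$ then turns each surviving $g^*_j g_j.1$ into $\alpha_j$, while the only nontrivial boundary contribution $g_0 g^*_0.1=1$ enters through the symmetric and antisymmetric combinations $g^*_0 g_0.1\pm g_0 g^*_0.1=1\pm 1$.

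For $n\geq 5$ every offset index $n-k$ with $1\leq k\leq 4$ is at least $1$, so no boundary term appears and \eqref{e:g^*g} reduces at once to
\[
\alpha_n=(t-1)^2\alpha_{n-1}+2t(t^2-t+1)\alpha_{n-2}+t^2(t-1)^2\alpha_{n-3}-t^4\alpha_{n-4},
\]
which is \eqref{e:alpharelation2}. For $n=1,2,3,4$ the same manipulation applies, with the extra rule that at the index $k=n$ the symmetric pair doubles the $\alpha_0$-contribution (coefficient $2$ rather than $1$) while the antisymmetric pair cancels it (coefficient $0$). Substituting $n=1,2,3,4$ one by one and using these rules reproduces the four identities of \eqref{e:alpharelation1}.

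The argument is largely bookkeeping; the only conceptual subtlety is tracking the sign-dependent boundary at $n-k=0$, which is exactly what distinguishes the clean recurrence \eqref{e:alpharelation2} from the modified low-order identities \eqref{e:alpharelation1}.
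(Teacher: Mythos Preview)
Your proposal is correct and follows essentially the same approach as the paper: both identify $\alpha_n=\langle g_n,g_n\rangle$ via the $p_\rho$-expansion and then read off the recursion \eqref{e:alpharelation2} by setting $m=n$ in \eqref{e:g^*g}. The only difference is that the paper dismisses the four low-order identities \eqref{e:alpharelation1} as ``easy to check'' (presumably by direct evaluation of $\alpha_0,\ldots,\alpha_4$), whereas you derive them uniformly from the same commutation relation by tracking the boundary contribution $g^*_0g_0.1\pm g_0g^*_0.1=1\pm1$ at index $0$; this is a mild improvement in presentation but not a different method.
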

\begin{proof}
It is easy to check \eqref{e:alpharelation1}. So we just verify \eqref{e:alpharelation2}. Recall that
\begin{align*}
g_n=\sum_{\rho\in\mathcal{OP}_n}\frac{(-2)^{l(\rho)}}{z_{\rho}(t)}p_{\rho}.
\end{align*}
Then we have
\begin{align*}
\langle g_n, g_n \rangle=\left\langle \sum_{\rho\in\mathcal{OP}_n}\frac{(-2)^{l(\rho)}}{z_{\rho}(t)}p_{\rho}, \sum_{\tau\in\mathcal{OP}_n}\frac{(-2)^{l(\tau)}}{z_{\tau}(t)}p_{\tau} \right\rangle=\sum_{\rho\in\mathcal{OP}_n}\frac{2^{l(\rho)}z_{\rho}}{z^2_{\rho}(t)}=\alpha_n.
\end{align*}
Thus, $\alpha_n=g^*_{n}g_n.$ Setting $m=n\geq 5$ in \eqref{e:g^*g}, we obtain \eqref{e:alpharelation2}.
\end{proof}

\begin{prop}\label{t:g*g}
Let $k$ be a non-negative integer and $\lambda$ a composition, then we have
\begin{align}\label{e:g*g}
g^*_kg_{\lambda}=\sum_{\tau\models k}\alpha_{\tau}g_{\lambda-\tau}
\end{align}
\end{prop}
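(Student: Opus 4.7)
The plan is to lift the identity to generating series and iterate the vertex-operator commutation that was already used to prove Proposition \ref{t:g^*g}. Write that commutation in the form
$$\mathbf g^*(z)\mathbf g(w)=C(z,w)\,\mathbf g(w)\mathbf g^*(z),\qquad C(z,w)=\exp\!\Bigl(\sum_{n\geq 1,\text{ odd}}\tfrac{2(t^n-1)^2}{n}(w/z)^n\Bigr),$$
which is obtained exactly as in Proposition \ref{t:g^*g} via normal ordering (all the quadratic contractions $[\partial/\partial p_n, p_n]=1$). First I will identify the coefficients of this scalar factor: applying the standard identity $\exp(\sum_{m\geq 1}\tfrac{a_m}{m}u^m)=\sum_n u^n\sum_{\rho\vdash n}a_\rho/z_\rho$ with $a_m=2(t^m-1)^2$ for $m$ odd and $a_m=0$ for $m$ even, only odd-part partitions survive and one gets exactly
$$C(z,w)=\sum_{n\geq 0}\alpha_n(t)(w/z)^n$$
by the definition \eqref{e:alpha} of $\alpha_n(t)$.

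Next I iterate the commutation $l=l(\lambda)$ times to move $\mathbf g^*(z)$ past $\mathbf g(w_1)\cdots \mathbf g(w_l)$, obtaining
$$\mathbf g^*(z)\mathbf g(w_1)\cdots\mathbf g(w_l)=\Bigl(\prod_{i=1}^{l}C(z,w_i)\Bigr)\mathbf g(w_1)\cdots\mathbf g(w_l)\,\mathbf g^*(z).$$
Applying both sides to the vacuum $1$ and using $\mathbf g^*(z)\cdot 1=1$ reduces the right-hand side to $\prod_i C(z,w_i)\cdot\prod_i\mathbf g(w_i)$. Finally I extract the coefficient of $z^{-k}w_1^{\lambda_1}\cdots w_l^{\lambda_l}$: the left side produces $g^*_k\,g_\lambda$, while on the right the factor $\prod_i C(z,w_i)$ contributes $\sum_{\tau_1+\cdots+\tau_l=k,\;\tau_i\geq 0}\prod_i\alpha_{\tau_i}\,w_i^{\tau_i}z^{-k}$ and $\prod_i\mathbf g(w_i)$ contributes $\prod_i g_{\lambda_i-\tau_i}$; assembling these yields $\sum_{\tau}\alpha_\tau g_{\lambda-\tau}$.

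The only bookkeeping point is that the sum coming out of the coefficient extraction is naturally over tuples $(\tau_1,\dots,\tau_l)$ of nonnegative integers with $\sum\tau_i=k$, whereas the statement writes $\sum_{\tau\models k}$. Adopting the convention $g_m=0$ for $m<0$ (which is forced by $\mathbf g(z)=\sum_{n\geq 0}g_nz^n$) makes the two sums agree: if $l(\tau)>l(\lambda)$ or $\tau_i>\lambda_i$ for some $i$ then $g_{\lambda-\tau}=0$, so extending to all compositions of $k$ introduces only vanishing terms. This is the only subtlety; everything else is routine once Step 1 identifies $C(z,w)$ with the generating function of $\alpha_n(t)$.
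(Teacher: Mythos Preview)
Your proof is correct and takes a genuinely different route from the paper's.

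The paper argues by induction on $k+|\lambda|$: it applies the five-term operator relation of Proposition \ref{t:g^*g} to commute $g^*_k$ past $g_{\lambda_1}$, invokes the inductive hypothesis on each of the resulting terms, and then reassembles the pieces using the recurrence for $\alpha_n$ established in Lemma \ref{t:alpharelation}. In other words, the paper deduces the proposition from the \emph{component form} of the commutation and needs the auxiliary recurrence for $\alpha_n$ to close the induction.

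You instead work at the level of generating series and recognise the scalar factor $C(z,w)$ arising from normal ordering directly as $\sum_{n\geq 0}\alpha_n(t)(w/z)^n$, via the standard expansion $\exp\bigl(\sum_{m}\tfrac{a_m}{m}u^m\bigr)=\sum_{\rho}a_\rho u^{|\rho|}/z_\rho$ together with the definition \eqref{e:alpha}. This identification is the one nontrivial step, and once it is made the proposition drops out by iterating the commutation $l(\lambda)$ times, applying to the vacuum, and reading off a single coefficient. Your argument therefore bypasses both the induction and Lemma \ref{t:alpharelation}; in fact it gives an independent proof of that lemma, since the rational form $C(z,w)=\dfrac{(z+t^2w)(z+w)}{(z-t^2w)(z-w)}\cdot\dfrac{(z-tw)^2}{(z+tw)^2}$ recorded in the paper immediately yields the five-term recurrence for its coefficients $\alpha_n$.

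Your closing remark about the index set is essentially right: in the paper's convention a composition may have zero entries, and the subtraction $\lambda-\tau$ implicitly fixes $\tau$ to have the same number of coordinates as $\lambda$, so the sum over $\tau\models k$ is exactly the sum over nonnegative integer $l$-tuples with $\sum_i\tau_i=k$ that your coefficient extraction produces. The convention $g_m=0$ for $m<0$ disposes of the terms with $\tau_i>\lambda_i$, as you note.
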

\begin{proof}
This is checked by induction on $k+|\lambda|.$ The initial step is trivial. Assume that \eqref{e:g*g} holds for $<k+|\lambda|.$ We show it is true when $k+|\lambda|.$ By \eqref{e:g^*g}, we have (denoting the LHS of \eqref{e:g*g} by I)
\begin{align*}
I=g^*_{k}g_{\lambda}&=g_{\lambda_1}g^*_kg_{\lambda^{(1)}}+(t-1)^2(g^*_{k-1}g_{\lambda_1-1}g_{\lambda^{(1)}}+g_{\lambda_1-1}g^*_{k-1}g_{\lambda^{(1)}})\\
&+2t(t^2-t+1)(g^*_{k-2}g_{\lambda_1-2}g_{\lambda^{(1)}}-g_{\lambda_1-2}g^*_{k-2}g_{\lambda^{(1)}})\\
&+t^2(t-1)^2(g^*_{k-3}g_{\lambda_1-3}g_{\lambda^{(1)}}+g_{\lambda_1-3}g^*_{k-3}g_{\lambda^{(1)}})\\
&-t^4(g^*_{k-4}g_{\lambda_1-4}g_{\lambda^{(1)}}-g_{\lambda_1-4}g^*_{k-4}g_{\lambda^{(1)}}).
\end{align*}
By induction, we have
\begin{align}\label{e:I}
\begin{split}
I&=\sum_{\tau\models k}\alpha_{\tau}g_{\lambda_1}g_{\lambda^{(1)}-\tau}+(t-1)^2\left(\sum_{\tau\models k-1}\alpha_{\tau}g_{\lambda_1-1-\tau_1}g_{\lambda^{(1)}-\tau^{(1)}}+\sum_{\tau\models k-1}\alpha_{\tau}g_{\lambda_1-1}g_{\lambda^{(1)}-\tau}\right)\\
&+2t(t^2-t+1)\left(\sum_{\tau\models k-2}\alpha_{\tau}g_{\lambda_1-2-\tau_1}g_{\lambda^{(1)}-\tau^{(1)}}-\sum_{\tau\models k-2}\alpha_{\tau}g_{\lambda_1-2}g_{\lambda^{(1)}-\tau}\right)\\
&+t^2(t-1)^2\left(\sum_{\tau\models k-3}\alpha_{\tau}g_{\lambda_1-3-\tau_1}g_{\lambda^{(1)}-\tau^{(1)}}+\sum_{\tau\models k-3}\alpha_{\tau}g_{\lambda_1-3}g_{\lambda^{(1)}-\tau}\right)\\
&-t^4\left(\sum_{\tau\models k-4}\alpha_{\tau}g_{\lambda_1-4-\tau_1}g_{\lambda^{(1)}-\tau^{(1)}}-\sum_{\tau\models k-4}\alpha_{\tau}g_{\lambda_1-4}g_{\lambda^{(1)}-\tau}\right).
\end{split}
\end{align}

Let us pause to introduce a notation:
\begin{align*}
I(i,j):=\sum_{\mbox{\tiny$\begin{array}{c}
\tau\models k\\
\tau_1=j\end{array}$}}\alpha_i\alpha_{\tau^{(1)}}g_{\lambda-\tau}.
\end{align*}
It is sufficient to prove
\begin{align*}
I=\sum_{i\geq0}I(i,i).
\end{align*}
By Lemma \ref{t:alpharelation}, we immediately obtain the following relations of $I(i,j)$:
\begin{align*}
2(t-1)^2I(0,1)=I(1,1)&; \quad (t-1)^2I(1,2)=I(2,2);\\
(t-1)^2I(2,3)+2t(t^2-t+1)&I(1,3)+2t^2(t-1)^2I(0,3)=I(3,3);\\
(t-1)^2I(3,4)+2t(t^2-t+1)&I(2,4)+t^2(t-1)^2I(1,4)=I(4,4);\\
(t-1)^2I(i-1,i)+2t(t^2-t+1)I(i-2,i)&+t^2(t-1)^2I(i-3,i)-t^4I(i-4,i)=I(i,i)  \quad (i\geq 5).
\end{align*}
Now, let's consider \eqref{e:I} term by term. It's clear that $\sum_{\tau\models k}\alpha_{\tau}g_{\lambda_1}g_{\lambda^{(1)}-\tau}=I(0,0).$ And,
\begin{align*}
&\sum_{\tau\models k-1}\alpha_{\tau}g_{\lambda_1-1-\tau_1}g_{\lambda^{(1)}-\tau^{(1)}}+\sum_{\tau\models k-1}\alpha_{\tau}g_{\lambda_1-1}g_{\lambda^{(1)}-\tau}\\
=&\sum_{\mbox{\tiny$\begin{array}{c}
\tau\models k\\
\tau_1\geq1\end{array}$}}\alpha_{\tau_1-1}\alpha_{\tau^{(1)}}g_{\lambda-\tau}+\sum_{\mbox{\tiny$\begin{array}{c}
\tau\models k\\
\tau_1=1\end{array}$}}\alpha_0\alpha_{\tau^{(1)}}g_{\lambda-\tau}\\
=&\sum_{i\geq1}I(i-1,i)+I(0,1).
\end{align*}
Similarly, We have
\begin{align*}
&\sum_{\tau\models k-2}\alpha_{\tau}g_{\lambda_1-2-\tau_1}g_{\lambda^{(1)}-\tau^{(1)}}-\sum_{\tau\models k-2}\alpha_{\tau}g_{\lambda_1-2}g_{\lambda^{(1)}-\tau}=\sum_{i\geq3}I(i-2,i).\\
&\sum_{\tau\models k-3}\alpha_{\tau}g_{\lambda_1-3-\tau_1}g_{\lambda^{(1)}-\tau^{(1)}}-\sum_{\tau\models k-3}\alpha_{\tau}g_{\lambda_1-3}g_{\lambda^{(1)}-\tau}=\sum_{i\geq3}I(i-3,i)+I(0,3).\\
&\sum_{\tau\models k-4}\alpha_{\tau}g_{\lambda_1-4-\tau_1}g_{\lambda^{(1)}-\tau^{(1)}}-\sum_{\tau\models k-4}\alpha_{\tau}g_{\lambda_1-4}g_{\lambda^{(1)}-\tau}=\sum_{i\geq5}I(i-4,i).\\
\end{align*}
It follows from the above relations of $I(i,j)$ that
\begin{align*}
I&=I(0,0)+(t-1)^2\left(I(0,1)+\sum_{i\geq2}I(i-1,i)\right)+2t(t^2-t+1)\sum_{i\geq3}I(i-2,i)\\
&+t^2(t-1)^2\left(I(0,3)+\sum_{i\geq3}I(i-3,i)\right)-t^4\sum_{i\geq5}I(i-4,i)\\
&=\sum_{i\geq0}I(i,i).
\end{align*}
\end{proof}

Now the following is clear.
\begin{thm}\label{t:T}
Given two compositions $\mu, \nu\models n$, then
\begin{align}
T^{\mu}_{\nu}(t)=\sum_{\tau\models \mu_1}\alpha_{\tau}T^{\mu^{(1)}}_{\nu-\tau}(t).
\end{align}
\end{thm}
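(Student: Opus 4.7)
The proof will be a short application of Proposition \ref{t:g*g} combined with the adjointness of $g_n$ and $g_n^*$. The plan is to rewrite $T^{\mu}_{\nu}(t)$ using the factorization $g_{\mu}=g_{\mu_1}g_{\mu^{(1)}}$, peel off the first factor by adjointness, and then invoke Proposition \ref{t:g*g} to expand $g^{*}_{\mu_1}g_{\nu}$ into a sum of $g_{\nu-\tau}$'s with coefficients $\alpha_{\tau}$.

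Concretely, starting from the definition
\[
T^{\mu}_{\nu}(t)=\langle g_{\mu},g_{\nu}\rangle=\langle g_{\mu_1}g_{\mu^{(1)}},g_{\nu}\rangle,
\]
I will use the adjointness relation $\langle Au,v\rangle=\langle u,A^{*}v\rangle$ (with $A=g_{\mu_1}$ viewed as the operator of left multiplication) to rewrite this as
\[
T^{\mu}_{\nu}(t)=\langle g_{\mu^{(1)}},g^{*}_{\mu_1}g_{\nu}\rangle.
\]
Then Proposition \ref{t:g*g} applied with $k=\mu_1$ and composition $\nu$ gives
\[
g^{*}_{\mu_1}g_{\nu}=\sum_{\tau\models\mu_1}\alpha_{\tau}\,g_{\nu-\tau}.
\]
Substituting this in and using linearity of the inner product yields
\[
T^{\mu}_{\nu}(t)=\sum_{\tau\models\mu_1}\alpha_{\tau}\,\langle g_{\mu^{(1)}},g_{\nu-\tau}\rangle=\sum_{\tau\models\mu_1}\alpha_{\tau}\,T^{\mu^{(1)}}_{\nu-\tau}(t),
\]
which is the claimed identity.

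Since all of the combinatorial and vertex-operator work has been done already (the commutation relation \eqref{e:g^*g}, the identification $\alpha_n=g^{*}_ng_n$ from Lemma \ref{t:alpharelation}, and the inductive expansion in Proposition \ref{t:g*g}), there is no real obstacle here; the statement is just the inner-product reformulation of Proposition \ref{t:g*g}. The only minor subtlety worth noting in the write-up is that $T^{\mu^{(1)}}_{\nu-\tau}(t)$ makes sense even though $\nu-\tau$ is only a composition (possibly with parts equal to the original $\nu_i-\tau_i$ in any order), which is consistent with the fact that $T$ has been defined on compositions via $\langle g_{\mu},g_{\nu}\rangle$ and the product $g_{\nu-\tau}=\prod_i g_{\nu_i-\tau_i}$ is independent of reordering.
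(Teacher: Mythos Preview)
Your proof is correct and is exactly the argument the paper has in mind: after Proposition~\ref{t:g*g} the paper simply writes ``Now the following is clear'' and states the theorem, so you have written out precisely the adjointness-plus-Proposition~\ref{t:g*g} derivation that is left implicit there.
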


Let $A=(a_{ij})_{l\times r}$ be an $l\times r$ matrix, where $a_{ij}$ are non-negative integers. We introduce the spin weight of $A$ by
\begin{align}
\tilde{wt}(A):=\prod_{i,j}\alpha_{a_{ij}}.
\end{align}
Now we come to a general combinatorial formula for $\sbtr(\mu,\nu).$
\begin{thm}\label{t:btr}
Let $\mu, \nu\models n,$ $\mu=(\mu_1,\mu_2,\ldots,\mu_l)$ and $\nu=(\nu_1,\nu_2,\ldots,\nu_r),$ then
\begin{align}
\sbtr(\mu,\nu)=(q-1)^{-l(\mu)-l(\nu)}\sum_{A}\tilde{wt}(A)
\end{align}
where the sum runs over all $l\times r$ nonnegative integer matrices $A$ with row sums $\mu_1,\mu_2,\ldots,\mu_l$ and column sums $\nu_1,\nu_2,\ldots,\nu_r.$
\end{thm}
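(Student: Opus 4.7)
The plan is to iterate the recursion in Theorem \ref{t:T} exactly $l=l(\mu)$ times, recognise the result as a generating series over matrices, and then divide by $(q-1)^{l(\mu)+l(\nu)}$ to pass from $T^\mu_\nu(q)$ back to $\sbtr(\mu,\nu)$.

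First, as was noted just before Theorem \ref{t:T}, we have $\sbtr(\mu,\nu)=(q-1)^{-l(\mu)-l(\nu)}T^\mu_\nu(q)$, so it suffices to prove that
\[
T^\mu_\nu(t)=\sum_A\tilde{wt}(A),
\]
summed over all $l\times r$ nonnegative integer matrices $A$ with row sums $\mu_1,\ldots,\mu_l$ and column sums $\nu_1,\ldots,\nu_r$. Throughout, I read each composition $\tau\models\mu_i$ appearing in the recursion as a length-$r$ sequence of nonnegative integers summing to $\mu_i$, so that $\nu-\tau$ is defined componentwise; the convention $g_{-k}=0$ ($k>0$) kills any term with a negative residual coordinate.

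Next, I iterate Theorem \ref{t:T} exactly $l$ times, each time peeling off the leading part $\mu_i$ of the upper index, to obtain
\[
T^\mu_\nu(t)=\sum_{\tau^{(1)},\ldots,\tau^{(l)}}\alpha_{\tau^{(1)}}\cdots\alpha_{\tau^{(l)}}\,T^{\emptyset}_{\nu-\tau^{(1)}-\cdots-\tau^{(l)}}(t),
\]
where each $\tau^{(i)}$ ranges over length-$r$ compositions of $\mu_i$. Since $|\mu|=|\nu|=n$, the residual $\nu-\sum_i\tau^{(i)}$ has nonnegative components summing to $0$, hence only the terms with $\sum_i\tau^{(i)}_j=\nu_j$ for every $j$ survive; for these, $T^{\emptyset}_{(0,\ldots,0)}=\langle 1,g_0^{\,r}\rangle=\langle 1,1\rangle=1$.

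Finally, I identify each surviving tuple $(\tau^{(1)},\ldots,\tau^{(l)})$ with the $l\times r$ matrix $A=(a_{ij})$ defined by $a_{ij}=\tau^{(i)}_j$: the constraint $|\tau^{(i)}|=\mu_i$ is the row-sum condition, the constraint $\sum_i\tau^{(i)}_j=\nu_j$ is the column-sum condition, and $\prod_i\alpha_{\tau^{(i)}}=\prod_{i,j}\alpha_{a_{ij}}=\tilde{wt}(A)$ by the very definition of the spin weight. Combining the three steps and multiplying by $(q-1)^{-l(\mu)-l(\nu)}$ yields the claimed formula. There is no substantial obstacle here: the heavy lifting has already been absorbed into Proposition \ref{t:g*g} and Theorem \ref{t:T}; what remains is disciplined bookkeeping for the iteration together with the one-line base case argument based on weight preservation and nonnegativity.
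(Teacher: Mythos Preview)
Your proof is correct and follows exactly the approach the paper intends: the paper states Theorem~\ref{t:btr} immediately after Theorem~\ref{t:T} without writing out a proof, but the implicit argument is precisely the $l$-fold iteration of the recursion that you spell out, together with the observation that $T^{\emptyset}_{(0,\ldots,0)}=1$ and that negative residual parts kill the corresponding terms.
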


As an application, we compute the regular character $\zeta^{reg}$ of $\mathcal{H}^c_{n}.$
\begin{cor}\label{t:reg}
Let $\mu\in\mathcal{OP}_n,$ then the trace of the regular representation of the Hecke-Clifford algebra $\mathcal{H}^c_n$ is given by
\begin{align}\label{e:reg}
\zeta^{reg}(\mu)=2^n(q-1)^{n-l(\mu)}
\frac{n!}{\prod_{i}\mu_i!}.
\end{align}
\end{cor}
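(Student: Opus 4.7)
The plan is to identify $\zeta^{\mathrm{reg}}(\mu)$ with the spin bitrace $\sbtr(\mu,(1^n))$ and then invoke the combinatorial formula of Theorem \ref{t:btr}. By definition $\sbtr(T_xC_I, T_yC_J) = \mathrm{tr}(L_{x,I}R_{y,J})$, so taking $T_yC_J = 1$ gives $\sbtr(T_xC_I, 1) = \mathrm{tr}(L_{x,I}) = \zeta^{\mathrm{reg}}(T_xC_I)$. Since $T_{\gamma_{(1^n)}} = 1$, this specializes to $\zeta^{\mathrm{reg}}(\mu) = \sbtr(\mu,(1^n))$. (Equivalently, taking the character of both sides of the bimodule decomposition \eqref{e:Hdecomposition} with the right factor evaluated at $1$ yields the same identity.)

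Next, I would apply Theorem \ref{t:btr} with $\nu = (1^n)$. The sum then ranges over $l \times n$ nonnegative integer matrices $A$ (with $l = l(\mu)$) whose row sums are $\mu_1,\ldots,\mu_l$ and whose column sums are all equal to $1$. The column constraint forces each column to contain exactly one $1$ and $l-1$ zeros, so choosing such an $A$ is equivalent to a surjection-with-multiplicities from the $n$ columns onto the row indices, with multiplicities $\mu_1,\ldots,\mu_l$. The number of such $A$ is therefore the multinomial coefficient $\binom{n}{\mu_1,\ldots,\mu_l} = \frac{n!}{\mu_1!\cdots\mu_l!}$.

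For any such $A$, all entries lie in $\{0,1\}$ with exactly $n$ ones (counting by row sums). Using $\alpha_0(q) = 1$ and $\alpha_1(q) = 2(q-1)^2$ from \eqref{e:alpharelation1}, the spin weight evaluates uniformly as $\tilde{wt}(A) = \alpha_1(q)^n = 2^n(q-1)^{2n}$. Combining the count of matrices, their common weight, and the prefactor $(q-1)^{-l(\mu)-n}$ from Theorem \ref{t:btr} gives
\begin{align*}
\zeta^{\mathrm{reg}}(\mu) \;=\; (q-1)^{-l(\mu)-n}\cdot \frac{n!}{\prod_i\mu_i!}\cdot 2^n(q-1)^{2n} \;=\; 2^n(q-1)^{n-l(\mu)}\frac{n!}{\prod_i\mu_i!},
\end{align*}
which is the claimed formula.

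Given the preparatory work already completed, there is no serious obstacle: the only real step is recognizing that the column-sum-$1$ constraint trivializes the enumeration in Theorem \ref{t:btr} to a multinomial count and reduces every weight to the same power of $\alpha_1(q)$. The rest is bookkeeping of powers of $(q-1)$.
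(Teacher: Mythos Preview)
Your proof is correct and follows essentially the same approach as the paper's own proof: identify $\zeta^{\mathrm{reg}}(\mu)=\sbtr(\mu,(1^n))$, apply Theorem~\ref{t:btr}, and observe that the column-sum-$1$ constraint forces all matrices to be $\{0,1\}$-matrices counted by a multinomial coefficient with common weight $\alpha_1^n$. Your justification of the identification $\zeta^{\mathrm{reg}}(\mu)=\sbtr(\mu,(1^n))$ via $T_{\gamma_{(1^n)}}=1$ is slightly more explicit than the paper's, but the argument is otherwise identical.
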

\begin{proof}
By definition, we have $\zeta^{reg}(\mu)=\zeta^{reg}(T_{\gamma_{\mu}})=\sbtr(\mu,(1^n)).$ Using Theorem \ref{t:btr}, we find that the sum in $\sbtr(\mu,(1)^n)$ runs over all matrices with entities $0$ or $1$ such that each column contains exactly one $1$ and the $i$-th row contains exactly $\mu_i$ $1'$s. Then \eqref{e:reg} follows from the fact that the number of these matrices is $\frac{n!}{\prod_{i}\mu_i!}$ and they have the same spin weight $\alpha_1^n=2^n(q-1)^{2n}.$
\end{proof}

{\bf Tables for $\zeta^{\lambda}_{\mu}(q)$}\\

\begin{table}[H]

 \centering

\caption{\label{tab:3}n=3}

 \begin{tabular}{|c|c|c|c|}

  \hline

 \tabincell{c}{$\mu\backslash \lambda$ } & $(3)$ & $(2,1)$ \\

  \hline

$(3)$ & \tabincell{c}{$2(3)_q$} & \tabincell{c}{$-2q$}  \\

\hline

$(1^3)$   & \tabincell{c}{$2^3$}    & \tabincell{c}{$2^2$}      \\

  \hline

 \end{tabular}

\end{table}

\begin{table}[H]

 \centering

\caption{\label{tab:4}n=4}

 \begin{tabular}{|c|c|c|c|}

  \hline

 \tabincell{c}{$\mu\backslash \lambda$ } & $(4)$ & $(3,1)$ \\

  \hline

$(3,1)$ & \tabincell{c}{$2^2(3)_q$} & \tabincell{c}{$2(q^2-3q+1)$}  \\

\hline

$(1^4)$   & \tabincell{c}{$2^4$}    & \tabincell{c}{$2^4$}      \\

  \hline

 \end{tabular}

\end{table}

\begin{table}[H]

 \centering

\caption{\label{tab:5}n=5}

 \begin{tabular}{|c|c|c|c|c|}

  \hline

 \tabincell{c}{$\mu\backslash \lambda$ } & $(5)$ & $(4,1)$ & $(3,2)$  \\

  \hline

$(5)$ & \tabincell{c}{$2(5)_q$} & \tabincell{c}{$-2q(3)_q$} & \tabincell{c}{$2q^2$} \\

\hline

$(3,1^2)$ & \tabincell{c}{$2^3(3)_q$}  & \tabincell{c}{$2^3(q-1)^2$}  & \tabincell{c}{$2^2(q^2-3q+1)$}  \\

  \hline

$(1^5)$   & \tabincell{c}{$2^5$}    & \tabincell{c}{$3\cdot2^4$}    & \tabincell{c}{$2^5$}     \\

  \hline

 \end{tabular}

\end{table}

\begin{table}[H]

 \centering

\caption{\label{tab:6}n=6}

 \begin{tabular}{|c|c|c|c|c|c|}

  \hline

 \tabincell{c}{$\mu\backslash \lambda$ } & $(6)$ & $(5,1)$ & $(4,2)$ & $(3,2,1)$ \\

  \hline

$(5,1)$ & \tabincell{c}{$2^2(5)_q$} & \tabincell{c}{$2(q^4-3q^3+3q^2-3q+1)$} & \tabincell{c}{$-2^2q(q-1)^2$} & \tabincell{c}{$2^2q^2$}  \\

\hline

$(3,3)$ & \tabincell{c}{$2^2(3)_q(3)_q$}  & \tabincell{c}{$2^2(3)_q(q^2-3q+1)$}  & \tabincell{c}{$2^2(q^4-4q^3+7q^2-4q+1)$}  & \tabincell{c}{$-2^3q(3)_q$}  \\

  \hline

$(3,1^3)$   & \tabincell{c}{$2^4(3)_q$}    & \tabincell{c}{$2^3(3q^2-5q+3)$}    & \tabincell{c}{$2^3(3q^2-7q+3)$}    & \tabincell{c}{$2^3(q^2-3q+1)$}   \\
\hline

$(1^6)$   & \tabincell{c}{$2^6$}    & \tabincell{c}{$2^7$}    & \tabincell{c}{$5\cdot2^5$}    & \tabincell{c}{$2^6$}   \\
  \hline
 \end{tabular}

\end{table}

\begin{table}[H]

 \centering

\caption{\label{tab:7}n=7}

 \begin{tabular}{|c|c|c|c|c|c|c|c|}

  \hline

 \tabincell{c}{$\mu\backslash \lambda$ } & $(7)$ & $(6,1)$ & $(5,2)$ & $(4,3)$ & $(4,2,1)$ \\

  \hline

$(7)$ & \tabincell{c}{$2(7)_q$} & \tabincell{c}{$-2q(5)_q$} & \tabincell{c}{$2q^2(3)_q$} & \tabincell{c}{$-2q^3$} & \tabincell{c}{$0$} \\

\hline

$(5,1,1)$ & \tabincell{c}{$2^3(5)_q$} & \tabincell{c}{$2^3(q-1)(4)_q$} & \tabincell{c}{$2^2(q^4-5q^3$\\$+7q^2-5q+1)$} & \tabincell{c}{$-2^3q(q-1)^2$} & \tabincell{c}{$-2^3q(q^2-3q+1)$}  \\

\hline

$(3,3,1)$ & \tabincell{c}{$2^3(3)_q(3)_q$}  & \tabincell{c}{$2^2(3)_q(3q^2-7q+3)$}  & \tabincell{c}{$2^4(q-1)^4$} & \tabincell{c}{$2^3(q^4-4q^3$\\$+7q^2-4q+1)$} & \tabincell{c}{$2^3(q^4-6q^3+9q^2-6q+1)$}   \\

\hline

$(3,1^4)$ & \tabincell{c}{$2^5(3)_q$}  & \tabincell{c}{$2^5(2q^2-3q+2)$}  & \tabincell{c}{$3\cdot2^5(q-1)^2$} & \tabincell{c}{$2^4(3q^2-7q+3)$} & \tabincell{c}{$2^5(2q^2-5q+2)$}  \\

  \hline

$(1^7)$   & \tabincell{c}{$2^7$}    & \tabincell{c}{$5\cdot2^6$}    & \tabincell{c}{$2^6\cdot3^2$}  & \tabincell{c}{$5\cdot2^6$}  & \tabincell{c}{$7\cdot2^6$}   \\
  \hline
 \end{tabular}

\end{table}


\vskip30pt \centerline{\bf Acknowledgments}
We would like to thank Zhijun Li for interesting discussions on Pfaffians.
The work is partially supported by
Simons Foundation grant No. 523868 and NSFC grant No. 12171303.
\bigskip

\bibliographystyle{plain}

\end{document}